\newtheorem{conjecture}{Conjecture}
\newtheorem{thm}{Theorem}
\newtheorem{lem}[thm]{Lemma}
\newtheorem{prop}[thm]{Proposition}
\newcommand{\eps}{\varepsilon}
\newcommand{\FF}{\mathcal{F}}
\newcommand{\JE}{\mathcal{E}}
\newcommand{\JJ}{\mathcal{J}}
\newcommand{\CC}{\mathcal{C}}
\newcommand{\GG}{\mathcal{G}}
\newcommand{\XX}{\mathcal{X}}
\newcommand{\PP}{\mathcal{P}}
\newcommand{\RR}{\mathbb{R}}
\newcommand{\NN}{\mathbb{N}}
\newcommand{\wg}{\widetilde{g}}
\newcommand\dd{\,\mbox{d}}
\DeclareMathOperator{\pdeg}{pre-deg}
\newcommand*\diff{\mathop{}\!\mathrm{d}}
\begin{document}

\begin{frontmatter}[classification=text]
%% EDITOR: this will force the keywords to appear right after the Abstract.
%%   If the abstract is too long and would force the keywords off the
%%   front page, please comment out % [classification=text] above
%%   This way the keywords will be floated on the bottom of the first page
%%   even though the Abstract spills over to the next page.

%%% AUTHOR: Title goes here.  This line is optional.  You must use it
%%   if title has footnote attached or requires nontrivial typesetting,
%%   e.g., inclusion of linebreaks to force nice layout.
\title{Finitely Forcible Graphons with an Almost Arbitrary Structure\thanks{The work of the first and third authors has received funding from the European Research Council (ERC) under the European Union's Horizon 2020 research and innovation programme (grant agreement No 648509). This publication reflects only its authors' view; the European Research Council Executive Agency is not responsible for any use that may be made of the information it contains. The first author was also supported by the Engineering and Physical Sciences Research Council Standard Grant number EP/M025365/1. The work of the second author was supported by NSF Postdoctoral Fellowship Award DMS-1705204. The work of the fourth author was supported by the Leverhulme Trust 2014 Philip Leverhulme Prize of the first author and partially by the project 17-09142S of GA\v{C}R.}} %% please capitalize all significant words

%%% AUTHOR:
%%% List all authors. If you wish, place grant acknowledgements in \thanks.
%%% In brackets include a short tag for each author.
\author[dan]{Daniel Kr{\'a}l'
}
\author[laci]{L{\'a}szl{\'o} M. Lov{\'a}sz
}
\author[jon]{Jonathan A. Noel
}
\author[kuba]{Jakub Sosnovec
}

%%% AUTHOR: Abstract goes here
\begin{abstract}
Graphons are analytic objects representing convergent sequences of large graphs.
A graphon is said to be finitely forcible if it is determined by finitely many subgraph densities, i.e., if the asymptotic structure of graphs represented by such a graphon depends only on finitely many density constraints.
Such graphons appear in various scenarios, particularly in extremal combinatorics.

Lov\'asz and Szegedy conjectured that all finitely forcible graphons possess a simple structure.
This was disproved in a strong sense by Cooper, Kr\'al' and Martins,
who showed that any graphon is a subgraphon of a finitely forcible graphon.
We strengthen this result by showing for every $\eps>0$ that
any graphon spans a $1-\eps$ proportion of a finitely forcible graphon.
\end{abstract}
\end{frontmatter}

%%% AUTHOR: body of paper starts here

\section{Introduction}

The theory of graph limits is an emerging area of combinatorics, which offers analytic tools to study large graphs.
The range of applications of analytic methods offered by the theory of graph limits has been constantly expanding.
The most prominent examples of such applications come from the closely related flag algebra method of Razborov~\cite{bib-razborov07},
which changed the landscape of extremal graph combinatorics by providing progress on numerous important problems in the area,
e.g.~\cite{bib-flag1, bib-flagrecent, bib-flag2, bib-flag3,bib-flag13, bib-flag4, bib-flag5, bib-flag6,
bib-flag7, bib-flag8, bib-flag9, bib-flag10, bib-razborov07,bib-flag11, bib-flag12}.
Among other applications of the methods provided by the theory,
we would like to highlight those from computer science 
related to property and parameter testing algorithms~\cite{bib-lovasz10+}.
We refer the reader to the recent monograph by Lov\'asz~\cite{bib-lovasz-book} for further results.

In this paper, we are interested in limits of sequences of dense graphs.
An analytic object representing a sequence of dense graphs is called a graphon.
Formally, a \emph{graphon} is a measurable function $W$ from the unit square $[0,1]^2$
to the unit interval $[0,1]$ that is symmetric; i.e., $W(x,y)=W(y,x)$ for every $(x,y)\in[0,1]^2$.
Given a graphon $W$, we can define the density $d(H,W)$ of a graph $H$ in $W$ (we give the definition in Section~\ref{sec-prelim}).
Every graphon is uniquely determined, up to weak isomorphism, by the densities of all graphs.
The main objects of our study are \emph{finitely forcible} graphons,
which are graphons that are uniquely determined by the densities of finitely many graphs.
We refer the reader to Section~\ref{sec-prelim} for a detailed presentation of these concepts.

Results on finitely forcible graphons can be found in disguise in various settings in graph theory.
For example, a classical result of Thomason~\cite{bib-thomason}, also see Chung, Graham and Wilson~\cite{bib-chung89+}, on quasirandom graphs
is equivalent to saying that the constant graphon is finitely forcible by the densities of $4$-vertex graphs.
Another source of motivation for studying finitely forcible graphons comes from extremal graph theory. For example,
Proposition~\ref{prop-ffext}, given in Section~\ref{sec-prelim}, 
states that a graphon $W$ is finitely forcible if and only if
there exists some linear combination of subgraph densities such that $W$ is its unique minimizer.

Lov{\'a}sz and Szegedy~\cite{bib-lovasz11+} initiated a systematic study of properties of finitely forcible graphons and
conjectured, based on examples of finitely forcible graphons known at that time, that all finitely forcible graphons must posses a simple structure.
In particular, a graphon $W$ can be associated with a topological space
whose points correspond to types of vertices that appear in any sequence of graphs converging to $W$ (so-called typical vertices);
the following conjectures assert that this space cannot be complex for finitely forcible graphons.

\begin{conjecture}[{Lov\'asz and Szegedy, \cite[Conjecture~9]{bib-lovasz11+}}]
\label{conj-compact}
The space of typical vertices of every finitely forcible graphon is compact.
\end{conjecture}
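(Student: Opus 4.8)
The plan is not to prove Conjecture~\ref{conj-compact} but to refute it, and in fact to refute it in the strong quantitative form announced in the abstract: I would show that an arbitrary graphon $W_0$ — in particular one whose associated space of typical vertices is as wild as desired (non-compact, say) — can be made to occupy a $1-\eps$ proportion of a finitely forcible graphon $W$, so that the wild structure of $W_0$ is inherited by $W$.

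First I would set up a \emph{coordinate system}: a designated region of $W$, built from step-function pieces together with a copy of the half-graphon $W(x,y)=\mathbf{1}[x+y\ge 1]$, which is itself robustly finitely forcible and whose typical vertices can be addressed by a pair of real coordinates. The point is that, once the ambient partition and this region are forced, every vertex of $W$ acquires well-defined coordinates, so one can speak of ``the value of $W$ on the pair of points with coordinates $(a,b)$.'' I would then reserve a separate part of $W$ in which this value is declared to equal $W_0(a,b)$, thereby encoding an arbitrary $W_0$ on a $1-\eps$ fraction of the graphon.

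The heart of the argument is to exhibit finitely many subgraph-density equations — realized by adding a bounded number of auxiliary ``constraint'' parts to $W$ — that force any graphon satisfying them to (i) admit an essentially unique weak-regularity partition matching the intended block structure, (ii) behave as the prescribed coordinate system on the backbone blocks, and (iii) take the prescribed value on every remaining pair of blocks, in particular reproducing $W_0$ on the encoding region. Via Proposition~\ref{prop-ffext} this amounts to writing $W$ as the unique minimizer of a single finite linear combination of densities, and it follows the template introduced by Lov\'asz and Szegedy and refined in the known constructions of structurally complex finitely forcible graphons.

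Finally, granting that $W$ is finitely forcible, I would check that its space of typical vertices contains a homeomorphic copy of that of $W_0$ (the copy sitting inside the encoding region), so taking $W_0$ with non-compact space of typical vertices contradicts Conjecture~\ref{conj-compact}, and taking $W_0$ arbitrary while tracking the $\eps$-proportions yields the strengthening. The main obstacle is the gadget design in (i)--(iii): the constraints must be rigid enough to pin the entire structure down using only finitely many densities, yet permissive enough to leave the bulk of $W$ free to carry a completely arbitrary $W_0$ — and in particular one must control the interaction between the neighborhood information used to define the coordinates and the \emph{a priori} unstructured values of $W_0$.
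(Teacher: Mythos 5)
Your overall reading of the situation is right: Conjecture~\ref{conj-compact} is false, and the way to refute it is to build a finitely forcible graphon containing an arbitrary prescribed graphon on a $1-\eps$ fraction of its vertices, which is exactly what the paper does (Theorem~\ref{thm1}); the partitioned-graphon, coordinate-system and decorated-constraint machinery you describe is the right scaffolding. However, there is a genuine gap at the heart of your step (iii). You propose to ``declare'' that the value of $W$ at the pair of points with coordinates $(a,b)$ equals $W_0(a,b)$. Finitely many subgraph densities cannot express pointwise values of an arbitrary measurable function: every constraint you can write is a polynomial identity in densities, and what such constraints can pin down on the encoding region is at best averaged information, e.g.\ the density of $W$ on each dyadic square. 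But matching all dyadic-square densities (equivalently, matching $d_W(\varphi^{-1}(J),\varphi^{-1}(J'))$ for all measurable $J,J'$ under a measure-preserving $\varphi$) does \emph{not} determine the graphon up to weak isomorphism --- the paper's example before Lemma~\ref{lm-oper} (a $2\times 2$ checkerboard versus the constant $1/2$) shows that finer structure can be averaged away.

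The paper closes this gap with two ingredients your sketch lacks. First, it does not encode $W_0$ ``from scratch'': it invokes Theorem~\ref{thm-ckm} to place a reference copy of the target graphon on a small group of parts $B_{G_*}$, and the dyadic-square referencing gadget only forces the dyadic-square densities of the large region $A_*$ to \emph{agree with those of the reference copy} --- a comparison between two regions of the unknown graphon, which is expressible by decorated constraints. Second, and crucially, it adds a single further constraint equating the (homomorphism) $C_4$-counts of the two regions and applies the spectral rigidity statement Lemma~\ref{lm-oper}: among all graphons whose rectangle densities match under a measure-preserving pullback, the $C_4$ count is minimized exactly by the genuine pullback, so fixing it forces $A_*$ to actually carry $W_F$ rather than a smoothed version of it. Without these two steps your plan cannot be completed as stated. (Your final paragraph, deducing non-compactness of the space of typical vertices of $W$ from that of $W_0$, is fine once the embedding theorem is in hand.)
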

\begin{conjecture}[{Lov\'asz and Szegedy, \cite[Conjecture~10]{bib-lovasz11+}}]
\label{conj-dimension}
The space of typical vertices of every finitely forcible graphon has finite dimension.
\end{conjecture}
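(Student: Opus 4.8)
The plan is to \emph{disprove} Conjecture~\ref{conj-dimension} (and, simultaneously, Conjecture~\ref{conj-compact}) in the strong quantitative form promised in the abstract: given any graphon $W$ and any $\eps>0$, we will construct a finitely forcible graphon $W'$ in which $W$ spans a proportion of at least $1-\eps$. Since $W$ may be chosen so that its own space of typical vertices is infinite-dimensional (indeed non-compact), this contradicts both conjectures at once. The overall strategy follows the template of Lov\'asz and Szegedy as developed by Cooper, Kr\'al' and Martins: one surrounds the ``target'' graphon $W$ with a small amount of rigid \emph{scaffolding} — a carefully designed graphon on the remaining $\eps$-fraction of $[0,1]$ — whose role is to impose a coordinate system on all of $[0,1]$, and then one exhibits finitely many density constraints that (i) force the scaffolding to take exactly the intended shape and (ii) use the coordinate system to pin down $W'$ everywhere, including on the region carrying $W$.

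First I would set up the scaffolding, which is the heart of the construction and where essentially all the work lies. One builds a partitioned graphon whose parts play distinct structural roles: parts that behave quasirandomly (forced by the Thomason and Chung--Graham--Wilson characterization mentioned above), parts encoding a linear order, ``tick'' parts subdividing that order into finer and finer intervals, and ``reader'' parts able to compare two vertices and extract their coordinates. Each gadget must be made finitely forcible \emph{in context}: finitely many subgraph-density equations, together with the constraints fixing the other gadgets, should leave it no freedom. The standard device here is to express the desired structure through the degrees of vertices into reference parts and through densities of a few small graphs straddling several parts; iterating such constraints forces, for example, a part to be a staircase with infinitely many geometrically shrinking steps.

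Next, with a coordinate system in hand, I would encode the arbitrary graphon $W$. After rescaling $W$ onto a subinterval of length $1-\eps$, one adds one further part of the scaffolding whose adjacencies to a pair $x,y$ in the $W$-region are forced — again by finitely many densities, using the already-forced coordinates of $x$ and $y$ — to ``read off'' the value $W(x,y)$. The subtle point is that $W$ carries uncountably much information, so $W$ itself cannot be pinned down by finitely many constraints; instead one forces only the \emph{relation} ``$W'$ restricted to the target region agrees with the graphon read off by the reader part,'' and this relational statement \emph{is} finitely forcible. One then verifies that $W'$ is the unique graphon, up to weak isomorphism, satisfying all chosen constraints and that the scaffolding has measure at most $\eps$.

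The main obstacle, as in all constructions of this type, is the \emph{rigidity bookkeeping}: the finite family of test graphs and target densities must be chosen so delicately that the associated system of polynomial equations in the unknown graphon has a unique solution, while the scaffolding's total measure stays below $\eps$ — the latter squeezes every gadget into an ever-smaller part and makes the forcing arguments correspondingly more fragile. I expect the bulk of the paper to be spent designing these gadgets, proving each is forced given the others, and then assembling a bootstrapping uniqueness argument in which already-established structure is used to establish the next piece.
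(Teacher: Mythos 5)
You are right that the correct response to this statement is to \emph{disprove} it, and your overall framing (a rigid scaffolding of measure at most $\eps$ forcing a coordinate system, with the target graphon occupying the remaining $1-\eps$ fraction) matches the architecture of the paper's Theorem~\ref{thm1}, which is indeed the strong quantitative refutation of Conjectures~\ref{conj-compact} and~\ref{conj-dimension}. However, there is a genuine gap at the decisive step. Your proposal says a ``reader part'' will be forced to ``read off the value $W(x,y)$'' for pairs $x,y$ in the target region, but this just relocates the problem: the reader part would itself have to encode the uncountable amount of information in $W$, and nothing in the proposal explains how finitely many density constraints could force \emph{its} structure. The paper resolves this with two ingredients that are absent from your outline. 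First, it invokes the earlier universality theorem of Cooper, Kr\'al' and Martins (Theorem~\ref{thm-ckm}) as a black box: the scaffolding contains, inside the small $B_*$ block, a copy of the finitely forcible graphon $\widetilde W_0$ which already contains $W_F$ as a subgraphon on a $1/14$ fraction; this is where the information about $W_F$ is stored, and it is forcible because $\widetilde W_0$ is finitely forcible. Second, the transfer from the compressed copy in $B_{G_*}$ to the large region $A_*$ is done by forcing the density of every dyadic square of each tile $A_i\times A_j$ to equal that of the corresponding dyadic square of $B_{G_i}\times B_{G_j}$ --- and this alone does \emph{not} suffice, since matching all dyadic-rectangle densities still allows the large copy to hide finer internal structure that averages out. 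The paper closes this loophole by additionally forcing $t(C_4,\cdot)$ to agree on the two regions and applying a spectral argument (Lemma~\ref{lm-oper}: the eigenvalues of the projected operator are dominated termwise, and equality of $\sum\lambda_i^4$ forces equality of the operators). Without the universality theorem as the information store and without the $C_4$/spectral step to upgrade ``equal dyadic densities'' to ``equal almost everywhere,'' the plan as written cannot be completed.

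A secondary, smaller point: the quasirandomness of Thomason and Chung--Graham--Wilson plays no role in the scaffolding; the gadgets actually used are a half-graphon coordinate system on the $F_*$ parts, ``checker'' and ``exponential checker'' tiles on $E_*$, $C_*$, $D_*$ that realize the dyadic indexing, and degree-balancing parts $G_1,G_2$ with rationally independent perturbations to keep all part degrees distinct. These are forced by decorated constraints in the sense of Lemma~\ref{lemma_partitioned} and Proposition~\ref{lem_set_decorated}, not by $4$-vertex quasirandomness conditions.
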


Conjectures~\ref{conj-compact} and~\ref{conj-dimension} were disproved by counterexample constructions in~\cite{bib-comp} and~\cite{bib-inf}, respectively.
A stronger counterexample to Conjecture \ref{conj-dimension} was given in~\cite{bib-reg}: if true, Conjecture~\ref{conj-dimension} would imply that
the minimum number of parts of a weak $\eps$-regular partition of a finitely forcible graphon is bounded by a power of $\eps^{-1}$.
For the finitely forcible graphon constructed in~\cite{bib-reg}, any weak $\eps$-regular partition must have a number of parts almost
exponential in $\eps^{-2}$ for infinitely many $\eps>0$, which is close to the general lower bound from~\cite{bib-conlon12+}.
This line of research culminated with the following general result of Cooper, Martins and the first author~\cite{ckm}.
\begin{thm}
\label{thm-ckm}
For every graphon $W_F$,
there exists a finitely forcible graphon $W_0$ such that $W_F$ is a subgraphon of $W_0$ induced by a $1/14$ fraction of the vertices of $W_0$.
\end{thm}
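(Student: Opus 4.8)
The plan is to construct, given a graphon $W_F$, a graphon $W_0$ on $[0,1]$ in which a designated \emph{data part} $P$ of measure $1/14$ induces a subgraphon weakly isomorphic to $W_F$, together with a finite family of graphs whose densities in $W_0$ force $W_0$ up to weak isomorphism; equivalently, by Proposition~\ref{prop-ffext}, one exhibits a finite linear combination of subgraph densities that is minimized uniquely by $W_0$. The remaining $13/14$ of the vertices form rigid \emph{scaffolding} whose sole role is to be pinned down, together with $P$, by those finitely many constraints. What makes this feasible despite $W_F$ carrying ``infinitely much'' information is that the space of graphons with the cut metric is compact, hence separable: since a graphon is determined up to weak isomorphism by the densities of all graphs, one may fix once and for all a sequence of graphs $F_1, F_2, \dots$ and interleave the reals $d(F_1,W_F), d(F_2,W_F), \dots$ into a single real $\rho = \rho(W_F) \in [0,1]$ from which $W_F$ is recovered in the cut metric by a \emph{fixed}, $W_F$-independent decoding procedure. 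Thus the dependence on $W_F$ is compressed into one real parameter, and the scaffolding is the same object for every $W_F$.

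First I would build a \emph{coordinate system}: using and combining the finitely forcible building blocks developed by Lov\'asz and Szegedy~\cite{bib-lovasz11+} (such as the half-graphon, the graphon equal to $1$ on $\{(x,y) : x+y \ge 1\}$ and $0$ elsewhere, and its products), one assembles a finitely forcible substructure inside which typical vertices of certain parts carry rigid ``addresses'' in $[0,1]$; combining two such systems gives the data part $P$ a two-dimensional address $(x,y) \in [0,1]^2$, so that the requirement ``the subgraphon induced by $P$ equals $W_F$'' becomes ``the value of $W_0$ at addresses $x$ and $y$ is $W_F(x,y)$''. Next I would realize a \emph{decoder}: a finitely forcible gadget, independent of $W_F$, containing (i) an input part on which $W_0$ is constant with an a priori unconstrained value $t \in [0,1]$; (ii) a grid region, indexed by the coordinate system, forced to be a valid space-time tableau of a fixed algorithm that, run on input $t$, produces successively better cut-metric approximations of the graphon decoded from $t$ --- this is forceable because the algorithm's transition rule is finitely describable, so tableau consistency amounts to the vanishing of the densities of finitely many forbidden local configurations together with finitely many boundary constraints; and (iii) the output region $P$, forced by a stability constraint to carry the cut-metric limit of those approximations. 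A single further constraint $d(H_0, W_0) = \rho(W_F)$, for a suitable graph $H_0$, then fixes $t = \rho(W_F)$, so that the decoder writes $W_F$ into $P$.

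Finally I would glue the coordinate system, the decoder and $P$ into one graphon $W_0$ on $[0,1]$, calibrating the measures of the gadget parts so that $P$ has measure exactly $1/14$ (the constant being what the bookkeeping of the construction yields), adjoin the ``linking'' constraints that force the parts to occupy their intended locations and forbid extraneous structure, and then verify that the full list of constraints --- coordinate-system constraints, tableau-consistency constraints, the stability constraint, the input constraint, and the linking constraints --- admits $W_0$ as its unique solution up to weak isomorphism. This last verification is where essentially all the difficulty lies: one must show that every gadget is \emph{rigid} and that distinct gadgets do not interfere, so that no spurious graphon satisfies all the constraints. Coaxing a static object like a graphon into faithfully simulating a computation, and then extracting a limit from it, using only finitely many real-valued density constraints and leaving no unintended degrees of freedom, demands the full delicacy of the coordinate-system machinery and extensive perturbation-stability analysis; by comparison the coordinate system and the single-real encoding are routine given the prior art. (The strengthening pursued in this paper, where the scaffolding is shrunk from measure $13/14$ to an arbitrarily small $\eps$, is then obtained by compressing the scaffolding and letting the data part dominate, within the same scheme.)
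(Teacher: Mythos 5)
First, a point of reference: the paper does not prove Theorem~\ref{thm-ckm} at all --- it is quoted from~\cite{ckm} and used as a black box (via Theorem~\ref{CKM}), so your proposal has to be measured against the construction in~\cite{ckm}, whose architecture is also mirrored in Sections 3.4--3.7 of the present paper. That construction does \emph{not} compress $W_F$ into a single real and then ``compute'' it back. Instead it stores the countable family of densities of $W_F$ on dyadic squares geometrically, as an explicit infinite staircase-like structure in an auxiliary part (the part $\widetilde Q$ of measure $5/14$), builds a coordinate system from the half-graphon and checker-type partitions to address dyadic squares, transfers each stored density to the corresponding dyadic square of the target tile, and finally upgrades ``all dyadic-square densities agree'' to ``equal almost everywhere'' by additionally fixing $t(C_4,\cdot)$ --- exactly the spectral argument of Lemma~\ref{lm-oper} and equations \eqref{eq-maineq}--\eqref{eq-maineqt}. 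Each stored real is read off as a density or relative degree, i.e., as a continuous functional of the graphon; no digit extraction or simulation of an algorithm occurs anywhere.

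The genuine gap in your proposal is the decoder. You assert that forcing a region to be ``a valid space-time tableau of a fixed algorithm'' reduces to the vanishing of densities of finitely many forbidden local configurations plus boundary constraints, but in the graphon setting zero-density constraints give only measure-zero control of supports (compare: $d(K_3,W)=0$ is satisfied by every triangle-free graphon) and come nowhere near forcing a \emph{unique} infinite discrete structure; forcing even much tamer infinite recursive patterns is precisely what requires the self-referential degree and partition gadgets of~\cite{bib-comp,bib-inf,ckm}. Moreover the map from your interleaved real $\rho$ back to the digit sequence is discontinuous, while every decorated or ordinary constraint is a polynomial (hence analytic) functional of the graphon, so the entire burden of realizing this discontinuous decoding falls on the unspecified gadget. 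Finally, even granting the decoder, your ``stability constraint'' forcing the output part to carry the cut-metric limit of the approximations is missing the key last step: agreement of a graphon with $W_F$ on all finite (e.g.\ dyadic-square) averages does not force pointwise a.e.\ equality --- this is exactly the failure illustrated by the example before Lemma~\ref{lm-oper} --- and some analogue of the $t(C_4,\cdot)$ constraint is indispensable. As written, the proposal defers essentially all of the difficulty to mechanisms that are not known to be implementable by finitely many density constraints.
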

Theorem~\ref{thm-ckm} yields counterexamples to Conjectures \ref{conj-compact} and \ref{conj-dimension} and
provides a universal framework for constructing finitely forcible graphons with very complex structure. 
In view of Proposition~\ref{prop-ffext}, Theorem~\ref{thm-ckm} says that
problems on minimizing a linear combination of subgraph densities,
which are among the problems of the simplest kind in extremal graph theory,
may have unique optimal solutions with highly complex structure.
Furthermore, given the general nature of Theorem~\ref{thm-ckm},
it is surprising~\cite{ckm} that
the family of graphs whose densities force $W_0$ in Theorem~\ref{thm-ckm} can be chosen to be independent of $W_F$.

It is natural to ask whether the fraction $1/14$ in Theorem~\ref{thm-ckm} can be replaced by a larger quantity.
The proof techniques from~\cite{ckm} allows replacing the fraction by any number smaller than $1/2$.
The purpose of this paper is to show that the fraction can be replaced by any number smaller than $1$.
\begin{thm}
\label{thm1}
For every $\eps>0$ and every graphon $W_F$,
there exists a finitely forcible graphon $W_0$ such that $W_F$ is a subgraphon of $W_0$ induced by a $1-\eps$ fraction of the vertices of $W_0$.
\end{thm}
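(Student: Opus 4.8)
The plan is to construct $W_0$ directly, refining the template used to prove Theorem~\ref{thm-ckm}. In that proof $W_F$ is placed on one part of a partitioned graphon, while the remaining parts form an auxiliary gadget whose role is to set up a coordinate system and to ``read off'' a recursively finer sequence of step-function approximations of $W_F$; roughly speaking, the reason the fraction occupied by $W_F$ cannot exceed $1/2$ with those techniques is that one of the auxiliary parts must be in a measure-preserving correspondence with the part carrying $W_F$, and hence has measure at least that of $W_F$. My approach is to break this single large ``coordinate'' part into a sequence of parts of geometrically decreasing measure, one for each scale of the recursion, so that the total measure of the auxiliary structure is $O(\eps)$ rather than a fixed constant.

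Concretely, I would take $W_0$ to be a partitioned graphon on $[0,1]$ with a ``main'' part $A$ of measure $1-\eps$ carrying a rescaled copy of $W_F$, together with auxiliary parts $B_1,B_2,\dots$ and a few fixed extra parts forming a rigid ``skeleton'', where $|B_k|$ decays geometrically so that $\sum_k |B_k| + |\text{skeleton}| < \eps$. Identify $A$ with $[0,1-\eps]$ and, for each $k$, let $\PP_k$ be the dyadic partition of $A$ into $2^k$ equal subintervals; the part $B_k$ is itself subdivided into $2^k$ equal pieces, the $i$-th of which is joined to the $i$-th interval of $\PP_k$ by edges whose density encodes the value, on that interval, of the step approximation $W^{(k)}=\mathbb{E}[W_F\mid \PP_k\times\PP_k]$. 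The edges between $B_k$ and $B_{k+1}$ are designed to enforce the \emph{refinement rule}: the $i$-th piece of $\PP_k$ splits into the $(2i)$-th and $(2i{+}1)$-th pieces of $\PP_{k+1}$, and $W^{(k)}$ is the corresponding average of $W^{(k+1)}$. The skeleton parts, which can be taken from a known finitely forcible graphon, anchor the ordering of the pieces within each $B_k$ and the identification of $B_k$ with $B_{k+1}$; the restriction of $W_0$ to $A\times A$ is exactly $W_F$, and the interface between $A$ and the $B_k$'s is built so that it constrains $W_F$ only through the approximations $W^{(k)}$, which jointly determine $W_F$ in the limit since $W^{(k)}\to W_F$ almost everywhere.

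To establish finite forcibility I would exhibit an explicit finite family $H_1,\dots,H_m$ and argue that any graphon $W$ with $d(H_j,W)=d(H_j,W_0)$ for all $j$ must, first, admit the claimed partition with the prescribed part measures (forced by the rigidity of the skeleton, as in the proofs underlying Theorem~\ref{thm-ckm}); second, satisfy the refinement rule at every scale; and third, therefore agree with $W_F$ on $A$. The delicate point --- and the step I expect to be the main obstacle --- is that, since $|B_k|\to 0$, a bounded set of subgraph densities conveys only a vanishing amount of ``signal'' about what happens inside $B_k$, so the fine structure cannot be pinned down scale by scale in a naive way. The way around this is to make the constraints describing the levels \emph{scale-invariant}: a single fixed set of graphs, interpreted conditionally on landing in the level-$k$ gadget (formally, via densities of graphs having many vertices in $B_k\cup B_{k+1}$, suitably normalized), must force the refinement rule uniformly in $k$, so that the whole tower of approximations is controlled by finitely many density equations together with one base case. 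One then has to check that the decaying measures do not destroy the rigidity of the skeleton or introduce unwanted relations among the $B_k$'s, and that the $A$--$B$ interface is generic enough that no spurious constraint is imposed on the arbitrary graphon $W_F$; verifying these points and assembling them into a self-contained forcing argument is where the bulk of the work lies.
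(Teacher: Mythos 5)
Your overall architecture (a main part of measure $1-\eps$ carrying $W_F$, plus an $O(\eps)$-measure auxiliary gadget with a coordinate system and a geometrically decaying dyadic hierarchy) is close in spirit to the paper's construction, but the proposal has two genuine gaps, the first of which is exactly the trap the paper warns against.

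First, the concluding step ``the approximations $W^{(k)}$ jointly determine $W_F$ in the limit'' fails. In a forcing argument you cannot force the pieces of $\PP_k$ to be literal intervals of $A$; all you can force is that there exist nested measurable partitions of $A$ whose pairwise densities match those of the dyadic intervals under $W_F$. Packaging these via monotone reordering, you obtain a measure-preserving $\varphi:A\to[0,1)$ with $d_W(\varphi^{-1}(J),\varphi^{-1}(J'))=d_{W_F}(J,J')$ for all measurable $J,J'$ --- and this does \emph{not} imply $W|_{A\times A}=W_F\circ(\varphi\times\varphi)$, because $\varphi$ need not be injective and the $\sigma$-algebra $\{\varphi^{-1}(J)\}$ need not generate the Borel sets of $A$, so martingale convergence does not apply. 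The paper's explicit counterexample before Lemma~\ref{lm-oper} (the bipartite-between-halves graphon versus the constant $1/2$, with $\varphi(x)=2x\bmod 1$) satisfies your constraints at \emph{every} scale while hiding non-trivial structure below all of them. Closing this gap requires an additional ingredient; the paper adds the single constraint $t(C_4,W_A)=t(C_4,W_F)$ and the spectral Lemma~\ref{lm-oper}, which shows that matching all cut densities through $\varphi$ can only decrease the Schatten-type norm $\sum\lambda_i^4=t(C_4,\cdot)$, with equality iff the graphons genuinely agree.

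Second, your tower has no mechanism to pin down the numerical values of the approximations $W^{(k)}$ with finitely many constraints. The refinement rule is only a consistency condition propagating from finer scales to coarser ones (each level is an average of the next), so it is satisfied by the step approximations of \emph{any} graphon; it never injects the actual data of $W_F$. Forcing the tile carrying $W^{(k)}$ to equal a prescribed $2^k\times 2^k$ step graphon needs a forcing family growing with $k$, which is precisely the ``vanishing signal'' problem you flag, and scale-invariant conditional constraints only handle the structural relations between levels, not the $4^k$ prescribed densities at level $k$. The paper's solution is to invoke Theorem~\ref{thm-ckm} to embed a \emph{rigid reference copy} of $W_F$ itself inside an $O(\eps)$-measure block $B_*$, and then use the dyadic-referencing gadget merely to equate each dyadic-square density of $A_i\times A_j$ with the corresponding density in the reference copy; the infinitely many target values are thus supplied by a finitely forcible object rather than written into the constraints. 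Without some analogue of this reference copy (or another way to encode $W_F$'s data), your construction does not single out $W_F$.
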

\noindent The proof of Theorem~\ref{thm1} is based on the method of decorated constraints, which was introduced in~\cite{bib-comp,bib-inf}, and
uses Theorem~\ref{thm-ckm} as one of the main tools.
Informally speaking, Theorem~\ref{thm-ckm} is used to embed the graphon $W_F$ on a small part of $W_0$ and
other auxiliary structure of $W_0$ is then used to magnify the graphon $W_F$ to the $1-\eps$ fraction of the vertices of $W_0$.
We remark that, in contrast to the proof of Theorem~\ref{thm-ckm}, the family of graphs used to force $W_0$ in Theorem~\ref{thm1} depends on $\eps$, and we show in Section~\ref{sect-unifamily} that this dependence is necessary.

\section{Preliminaries}
\label{sec-prelim}

We now introduce the notation and terminology used in the paper.
We start with some general notation.
For $k\in\NN$, $[k]$ denotes the set of integers $\{1,2,\ldots,k\}$.
If $\FF$ is a family of sets, we use $\bigcup\FF$ to denote the union of all sets $F\in\FF$.
Unless stated otherwise, we work with the Lebesgue measure on $[0,1]^d$ throughout the paper.
If $X\subseteq\RR^d$ is a measurable set,
we write $|X|$ for its measure and for two measurable sets $X,Y\subset\RR^d$, and
we write $X\sqsubseteq Y$ to mean $|X\setminus Y|=0$.

\subsection{Graphs and graphons}

The \emph{order} of a graph $G$, which is denoted by $|G|$, is its number of vertices.
The \emph{density} of a graph $H$ in $G$, which is denoted $d(H,G)$, is the probability that a uniformly randomly chosen set of $|H|$ vertices of $G$
induces a graph isomorphic to $H$. If $|H|>|G|$, then we set $d(H,G)$ to zero

Our notation mostly follows that used in~\cite{ckm} in relation to graph limits.
As defined earlier, a \emph{graphon} is a measurable function $W$ from the unit square $[0,1]^2$ to the unit interval $[0,1]$ that
is \emph{symmetric}; i.e., $W(x,y)=W(y,x)$ for every $(x,y)\in[0,1]^2$.
Conceptually, a graphon $W$ can be thought of as an infinite weighted graph on the vertex set $[0,1]$
with the edge $(x,y)\in[0,1]^2$ having weight $W(x,y)$.
Following this intuition, we refer to the points of $[0,1]$ as \emph{vertices}.
To visualize the structure of a graphon,
we shall use a figure that may be seen as a continuous version of the adjacency matrix.
More precisely, in a figure depicting $W$\!,
the domain of $W$ is represented by the unit square $[0,1]^2$ with the origin in the top left corner.
The values of $W$ are represented by appropriate shades of gray ($0$ corresponds to white and $1$ to black).
See the left side of Figure~\ref{decor_example} for an example.

A graphon can be associated with a probability distribution on graphs of a fixed order.
Formally, for a graphon $W$ and an integer $k\in\NN$,
a $W$-\emph{random graph} of order $k$ is a graph $G$ obtained by the following two step procedure.
Let $x_1,\ldots,x_k\in[0,1]$ be $k$ points chosen uniformly and independently at random.
Form a graph $G$ with the vertex set $[k]$ such that
the edge $ij$ is present with probability $W(x_i,x_j)$ for every pair of distinct vertices $i,j\in[k]$.
The \emph{density} of a graph $H$ in the graphon $W$ is the probability that a $W$-random graph of order $|H|$
is isomorphic to $H$; we denote the density of $H$ in $W$ by $d(H,W)$.
Note that $d(H,W)$ is also the expected density of $H$ in a $W$-random graph $G$ of order $k$ for every $k\geq |H|$.

Consider a sequence of graphs $\left(G_n\right)_{n\in\NN}$ such that the orders $\left|G_n\right|$ tend to infinity.
We say that the sequence $\left(G_n\right)_{n\in\NN}$ is \emph{convergent} if for every graph $H$,
the sequence of the densities of $H$ in $G_n$, i.e., the sequence $\left(d\left(H,G_n\right)\right)_{n\in\NN}$, is convergent.
We say that the sequence $\left(G_n\right)_{n\in\NN}$ converges to a graphon $W$ if
\[\lim_{n\to\infty}d\left(H,G_n\right)=d(H,W)\]
for every graph $H$.
Lov{\'a}sz and Szegedy~\cite{bib-lovasz06+} showed that every convergent sequence of graphs converges to a graphon.
Conversely, they showed that for every graphon, there exists a sequence of graphs converging to it.
In particular, the sequence of $W$-random graphs of increasing orders converges to $W$ with probability one.

For a graphon $W$ and a vertex $x\in[0,1]$, we define the \emph{degree} of $x$ in $W$ as
\[\deg_W(x)=\int_{[0,1]}W(x,y)\diff y.\]
Note that the degree is well-defined for almost every vertex $x\in[0,1]$.
We also define the \emph{neighbourhood} of a vertex $x\in[0,1]$ as the set $\{y\in[0,1]:W(x,y)>0\}$ and denote it $N_W(x)$.
Note that the set $N_W(x)$ is measurable for almost every $x\in[0,1]$. The \emph{density} of a graphon $W$ between two measurable subsets $A$ and $B$ of $[0,1]$ is defined to be
\[d_W(A,B)=\int_{A\times B}W(x,y)\diff{x}\diff{y}.\]
When the graphon $W$ is clear from context, we omit the subscripts.
Finally, we define a graphon parameter $t(C_4,W)$ as follows:
\[t(C_4,W)=\int_{[0,1]^4}W(x_1,x_2)W(x_2,x_3)W(x_3,x_4)W(x_4,x_1)\diff{x_1}\diff{x_2}\diff{x_3}\diff{x_4},\]
which is equal to the probability that a randomly chosen cyclic $4$-tuple in a $W$-random graph forms a (not necessarily induced) cycle of length four.
Observe that $t(C_4,W)=d(C_4,W)/3+d(K_4^-,W)/3+d(K_4,W)$, where $K_4^-$ is the graph obtained from $K_4$ by removing an edge.

We say that two graphons $W_1$ and $W_2$ are \emph{weakly isomorphic} if $d\left(H,W_1\right)=d\left(H,W_2\right)$ for every graph $H$.
Clearly, weakly isomorphic graphons are limits of the same sequences of graphs.
It is natural to ask how weakly isomorphic graphons can differ in their structure;
this was answered in~\cite{bib-borgs10+}.
Recall that a function $\varphi\colon[0,1]\to[0,1]$ is \emph{measure-preserving}
if it is measurable and $|\varphi^{-1}(X)|=|X|$ for every measurable subset $X\subseteq[0,1]$.
It is easy to check that if $\varphi\colon[0,1]\to[0,1]$ is a measure-preserving map,
then the graphon $W^\varphi$ defined as $W^\varphi(x,y)=W(\varphi(x),\varphi(y))$ is weakly isomorphic to $W$. In \cite{bib-borgs10+}, it was shown in particular that
two graphons $W_1$ and $W_2$ are weakly isomorphic if and only if
there exist measure-preserving maps $\varphi_1,\varphi_2\colon[0,1]\to[0,1]$ such that $W_1^{\varphi_1}= W_2^{\varphi_2}$ almost everywhere.

Let $W_1$ and $W_2$ be two graphons and $X\subseteq[0,1]$ a non-null measurable set.
We say that $W_1$ is a \emph{subgraphon} of $W_2$ induced by $X$
if there exist measure-preserving maps $\varphi_1\colon X\to [0,|X|)$ and $\varphi_2\colon X\to X$ such that
\[W_1\left(|X|^{-1}\cdot\varphi_1(x),|X|^{-1}\cdot\varphi_1(y)\right)=W_2\left(\varphi_2(x),\varphi_2(y)\right)\]
for almost every $(x,y)\in X\times X$. 

A graphon $W$ is \emph{finitely forcible} if there exist graphs $H_1,\ldots,H_m$ such that
any graphon $W'$ satisfying that $d\left(H_i,W'\right)=d\left(H_i,W\right)$ for every $i\in[m]$ is weakly isomorphic to $W$. 
A family of graphs $H_1,\ldots,H_m$ whose densities determine the graphon $W$ up to weak isomorphism is called a \emph{forcing family}.
Examples of finitely forcible graphons include constant graphons, step graphons~\cite{bib-lovasz08+} and the half-graphon~\cite{bib-diaconis09+,bib-lovasz11+}.
We remind the reader that
a \emph{step graphon} is a graphon $W$ such that
there exists a partition of $[0,1]$ into intervals $U_1,\ldots,U_k$
such that $W$ is constant on $U_i\times U_j$ for every $i,j\in[k]$, and
the \emph{half-graphon} is the graphon $W_\Delta$ such that $W_\Delta(x,y)=1$ if $x+y\geq 1$ and $W_\Delta(x,y)=0$ otherwise.

The following proposition provides a link between finitely forcible graphons and extremal graph theory.

\begin{prop}
\label{prop-ffext}
A graphon $W$ is finitely forcible if and only if
there exist graphs $H_1,\ldots,H_m$ and reals $\alpha_1,\ldots,\alpha_m$ such that for every graphon $W'$\!,
\[\sum_{i=1}^m \alpha_i d\left(H_i,W\right)\leq \sum_{i=1}^m \alpha_i d\left(H_i,W'\right),\]
and equality holds if and only if $W$ and $W'$ are weakly isomorphic.
\end{prop}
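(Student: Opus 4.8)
The plan is to prove the two implications separately; the interesting direction is that finite forcibility implies the existence of an extremal linear combination, and for that I would follow the standard compactness argument in the space of graphons under the cut metric.

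For the easy direction, suppose graphs $H_1,\dots,H_m$ and reals $\alpha_1,\dots,\alpha_m$ are such that $W$ is the unique (up to weak isomorphism) minimizer of $\sum_i \alpha_i d(H_i,\cdot)$. I claim the family $H_1,\dots,H_m$ is a forcing family. Indeed, if $W'$ satisfies $d(H_i,W')=d(H_i,W)$ for all $i$, then $\sum_i \alpha_i d(H_i,W') = \sum_i \alpha_i d(H_i,W)$, so $W'$ also attains the minimum, hence $W'$ is weakly isomorphic to $W$ by the uniqueness hypothesis. So $W$ is finitely forcible.

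For the hard direction, suppose $W$ is finitely forcible with forcing family $H_1,\dots,H_m$. The idea is to take the linear combination
\[
\Phi(W') \;=\; \sum_{i=1}^m \bigl(d(H_i,W')-d(H_i,W)\bigr)^2,
\]
which is nonnegative, vanishes exactly when $d(H_i,W')=d(H_i,W)$ for all $i$ (i.e.\ exactly on graphons weakly isomorphic to $W$, by the forcing property), and is itself a finite linear combination of subgraph densities: each product $d(H_i,W')d(H_j,W')$ equals $d(H_{ij},W')$ for a suitable graph $H_{ij}$ obtained by taking disjoint unions of copies of $H_i$ and $H_j$ and averaging (more precisely, $d(H_i,W')d(H_j,W')$ is a convex combination of $d(H,W')$ over graphs $H$ on $|H_i|+|H_j|$ vertices that decompose into an $H_i$-part and an $H_j$-part, using independence of the sampled vertices). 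Expanding $\Phi$ and collecting terms then writes $\Phi(W') = \sum_{k} \beta_k d(F_k,W')$ for finitely many graphs $F_k$ and reals $\beta_k$, plus the constant $\sum_i d(H_i,W)^2$; absorbing the constant as $\beta_k d(K_1,W')$-type terms (since $d(K_1,W')=1$ always) gives a genuine linear combination $\sum_k \beta_k d(F_k,\cdot)$ whose value is $\Phi(W')+\text{const}\geq \text{const}$, with equality iff $W'$ is weakly isomorphic to $W$. Taking $H_i := F_i$ and $\alpha_i := \beta_i$ finishes this direction, with $W$ itself realizing the minimum.

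The main point to get right is the claim that a product of two subgraph densities is a finite nonnegative linear combination of subgraph densities, and consequently that $\Phi$ really is of the required form; this is a routine but slightly fiddly sampling computation (condition on which of the $|H_i|+|H_j|$ sampled points form the $H_i$-sample and which form the $H_j$-sample, noting the samples are independent in a $W'$-random graph). No compactness or analytic input is actually needed once this algebraic identity is in hand; the uniqueness clause transfers directly from the forcing hypothesis to the minimizer statement, and conversely as in the easy direction above. \qed
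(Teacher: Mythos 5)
Your proof is correct. The paper states Proposition~\ref{prop-ffext} without giving a proof, but your argument is the standard one: the easy direction is exactly as you say, and for the hard direction the sum-of-squares functional $\sum_i (d(H_i,W')-d(H_i,W))^2$ works because a product of two subgraph densities is a finite nonnegative linear combination of subgraph densities --- this is precisely the identity displayed in the paper's proof of Proposition~\ref{densall} --- and the constant term is absorbed via $d(K_1,W')=1$. The only quibble is your phrase ``equals $d(H_{ij},W')$ for a suitable graph'': the product is a linear combination of densities over all graphs on $|H_i|+|H_j|$ vertices admitting the appropriate vertex partition, not a single density, but you correct this immediately and it does not affect the argument.
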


In the opposite direction, it is not the case that every linear combination of subgraph densities has a unique minimizer 
(e.g. $d(K_3,W)$ is minimized by any triangle-free graphon $W$). 
However, Lov\'asz~\cite{bib-lovasz-open,bib-lovasz-large,bib-lovasz-book,bib-lovasz11+} conjectured that 
one can always add further density constraints to make the solution unique. 
Specifically, he conjectured the following.
Let $H_1,\ldots,H_\ell$ be graphs and $d_1,\ldots,d_\ell$ reals.
If there exists a graphon $W$ such that $d(H_i,W)=d_i$, $i=1,\ldots,\ell$,
then there exists a finitely forcible such graphon.
This statement turned out to be false and
the conjecture was recently disproved by Grzesik and two of the authors~\cite{bib-finforceaddconstraints}.

We next give two statements about graphons that are needed in our arguments.
We start with a lemma,
which is a special case of~\cite[Proof of Theorem 3.12]{bib-dolezal+} given by Dole\v{z}al et al.
We include a short proof of the lemma for the completeness.
We remark that the lemma can also be proven following the lines of~\cite[Section 5.4]{ckm}.
We also remark that it is necessary to require that $t(C_4,W_1)=t(C_4,W_2)$ as can be seen by the following example.
Let $W_1$ be the graphon that is equal to zero on $[0,1/2]^2\cup [1/2,1]^2$ and equal to one elsewhere, and
let $W_2$ be the graphon equal to $1/2$ everywhere (see Figure~\ref{fig_opex}).
Further, let $\varphi(x)=2x\mod 1$.
It is easy to check that $d_{W_1}(\varphi^{-1}(J),\varphi^{-1}(J'))=d_{W_2}(J,J')=\frac{|J|\cdot|J'|}{2}$
for all measurable subsets $J,J'\subseteq [0,1]$, however,
the graphons $W_1$ and $W_2$ are not weakly isomorphic.

\begin{figure}[ht]
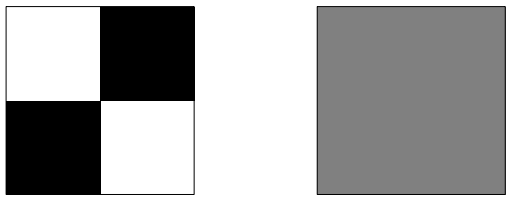

\begin{center}
\epsfbox{epsuniversal-66.mps} \hskip 15mm
\epsfbox{epsuniversal-67.mps}
\end{center}
\caption{The graphons $W_1$ and $W_2$ from the example given before Lemma~\ref{lm-oper}.}
\label{fig_opex}
\end{figure}

\begin{lem}
\label{lm-oper}
Let $W_1$ and $W_2$ be two graphons.
Suppose that there exists a measure preserving map $\varphi:[0,1]\to [0,1]$ such that
\begin{equation} \label{eq-operlemcond}
d_{W_1}(\varphi^{-1}(J),\varphi^{-1}(J'))=d_{W_2}(J,J')
\end{equation}
for all measurable subsets $J,J'\subseteq [0,1]$.
If $t(C_4,W_1)=t(C_4,W_2)$, then for almost all $(x,y)\in [0,1]^2$, it holds that $W_1(x,y)=W_2(\varphi(x),\varphi(y))$.
\end{lem}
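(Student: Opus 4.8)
The plan is to recast the hypothesis as a statement about conditional expectation and then to use that $t(C_4,\cdot)$ cannot increase when a kernel is replaced by such a conditional expectation, and decreases strictly unless the kernel is unchanged.

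\emph{Step 1: a conditional-expectation reformulation.} Let $\mathcal{A}:=\{\varphi^{-1}(J):J\subseteq[0,1]\text{ measurable}\}$, which is a $\sigma$-algebra on $[0,1]$ because $\varphi^{-1}$ commutes with complements and countable unions, and put $\widetilde{W}(x,y):=W_2(\varphi(x),\varphi(y))$, a graphon that is measurable with respect to the product $\sigma$-algebra $\mathcal{A}\otimes\mathcal{A}$. Since $\varphi$, and hence $\varphi\times\varphi$, is measure-preserving, the change-of-variables formula gives $d_{W_2}(J,J')=d_{\widetilde W}(\varphi^{-1}(J),\varphi^{-1}(J'))$ for all $J,J'$, so \eqref{eq-operlemcond} becomes $d_{W_1}(A,B)=d_{\widetilde W}(A,B)$ for all $A,B\in\mathcal A$. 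The rectangles $A\times B$ with $A,B\in\mathcal A$ form a $\pi$-system generating $\mathcal A\otimes\mathcal A$, so by the uniqueness theorem for finite measures $\int_S W_1=\int_S\widetilde W$ for every $S\in\mathcal A\otimes\mathcal A$; as $\widetilde W$ is $\mathcal A\otimes\mathcal A$-measurable this says that $\widetilde W$ is a version of the conditional expectation of $W_1$ given $\mathcal A\otimes\mathcal A$. Writing $D:=W_1-\widetilde W$, it follows that $\int_S D=0$ for all $S\in\mathcal A\otimes\mathcal A$, and hence — using that $\mathcal A$ is countably generated modulo null sets, $\varphi$ being measure-preserving — that for almost every $x$ the function $D(x,\cdot)$ is orthogonal to every bounded $\mathcal A$-measurable function, and symmetrically in the other coordinate.

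\emph{Step 2: the $C_4$ identity and its sign.} Measure-preservation of $\varphi$ also gives $t(C_4,\widetilde W)=t(C_4,W_2)$, so the hypothesis reads $t(C_4,W_1)=t(C_4,\widetilde W)$. Expand $t(C_4,W_1)=t(C_4,\widetilde W+D)$ over the $2^4$ ways of labelling the four edges of $C_4$ by $\widetilde W$ or $D$. In any labelling that is not constant there is a $D$-edge $\{a,b\}$ sharing a vertex $a$ with a $\widetilde W$-edge $\{a,c\}$ (if a cyclic labelling changes colour at all, two adjacent edges receive different labels); integrating the corresponding term first over $a$ gives $0$, because $a\mapsto\widetilde W(a,c)$ is bounded and $\mathcal A$-measurable while $a\mapsto D(a,b)$ is orthogonal to all such functions, and the remaining two factors do not involve $a$ (so Fubini applies). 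Hence every mixed term vanishes and
\[
t(C_4,W_1)=t(C_4,\widetilde W)+t(C_4,D).
\]
Now $D$ is a bounded symmetric kernel, so with $T_D$ the Hilbert–Schmidt integral operator on $L^2[0,1]$ with kernel $D$ one has $t(C_4,D)=\mathrm{Tr}(T_D^4)=\|T_D^2\|_{\mathrm{HS}}^2=\int_{[0,1]^2}\left(\int_{[0,1]}D(x,z)D(z,y)\diff z\right)^2\diff x\,\diff y\ge0$, with equality only if $T_D^2=0$, hence (as $T_D$ is self-adjoint) $T_D=0$, hence $D=0$ almost everywhere. This non-negativity — valid even though $D$ takes negative values — is exactly where the hypothesis $t(C_4,W_1)=t(C_4,W_2)$ is used, and I regard establishing the displayed identity together with this sign as the crux; the measure-theoretic preliminaries in Step 1 (especially the passage to pointwise orthogonality) and the Fubini justifications for the vanishing of the mixed terms are routine but must be carried out carefully.

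\emph{Step 3: conclusion.} Comparing the displayed identity with $t(C_4,W_1)=t(C_4,\widetilde W)$ forces $t(C_4,D)=0$, hence $D=0$ almost everywhere, i.e.\ $W_1(x,y)=W_2(\varphi(x),\varphi(y))$ for almost every $(x,y)\in[0,1]^2$.
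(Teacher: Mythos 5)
Your Step 1 reduction to the statement that $\widetilde W$ is a version of $E[W_1\mid\mathcal A\otimes\mathcal A]$ is correct, and the mechanism in Step 3 is fine, but the final claim of Step 1 --- that for almost every $x$ the slice $D(x,\cdot)$ is orthogonal to every bounded $\mathcal A$-measurable function --- does not follow and is false in general, and this breaks Step 2. Knowing $\int_S D=0$ for all $S\in\mathcal A\otimes\mathcal A$ only tells you that for $A\in\mathcal A$ the function $b\mapsto\int_A D(a,b)\diff a$ has vanishing conditional expectation given $\mathcal A$; it need not vanish a.e., since it need not be $\mathcal A$-measurable. In operator language, with $\Pi_K$ the projection onto $\mathcal A$-measurable functions, the hypothesis gives $\Pi_K T_D\Pi_K=0$, whereas your claim amounts to the stronger $T_D\Pi_K=0$. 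Concretely, take $\varphi(x)=2x\bmod 1$, $W_2\equiv\tfrac12$ (so $\widetilde W\equiv\tfrac12$), $f=\mathbf 1_{[0,1/2)}-\mathbf 1_{[1/2,1)}$, $h=\mathbf 1_{[0,1/4)}$, and $W_1=\tfrac12+D$ with $D(x,y)=\tfrac14\bigl(f(x)h(y)+h(x)f(y)\bigr)$. Since $\varphi^{-1}(J)=\tfrac J2\cup(\tfrac J2+\tfrac12)$, one has $\int_{\varphi^{-1}(J)}f=0$ for every measurable $J$, so \eqref{eq-operlemcond} holds; yet $\int_0^1 D(a,b)\diff a=\tfrac1{16}f(b)\neq0$ for a.e.\ $b$, so $D(\cdot,b)$ fails to be orthogonal even to the constant function $1$. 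For this same example the mixed term of your expansion with two adjacent $D$-edges equals $\mathrm{Tr}(T_{\widetilde W}^2T_D^2)=\tfrac14\|T_D1\|_2^2=\tfrac1{1024}>0$, so the identity $t(C_4,W_1)=t(C_4,\widetilde W)+t(C_4,D)$ is simply false under the stated hypotheses. (Your argument does correctly kill the labellings whose trace contains the factor $\Pi_KT_D\Pi_K$, namely $\mathrm{Tr}(T_{\widetilde W}^3T_D)$ and the alternating term $\mathrm{Tr}(T_{\widetilde W}T_DT_{\widetilde W}T_D)$; but $\mathrm{Tr}(T_{\widetilde W}^2T_D^2)$ and $\mathrm{Tr}(T_{\widetilde W}T_D^3)$ only contain $\Pi_KT_D$ or $T_D\Pi_K$ at the relevant spots, and these are not zero.)

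The repair is to give up the exact Pythagorean splitting and argue by inequality, which is what the paper does: \eqref{eq-operlemcond} gives $T_{\widetilde W}=\Pi_KT_{W_1}\Pi_K$, a compression of the compact self-adjoint operator $T_{W_1}$, so by min--max the eigenvalues ordered by absolute value satisfy $|\mu_i|\le|\lambda_i|$ for all $i$; hence $t(C_4,\widetilde W)=\sum_i\mu_i^4\le\sum_i\lambda_i^4=t(C_4,W_1)$, with equality forcing termwise equality and thus $T_{\widetilde W}=T_{W_1}$, i.e.\ $\widetilde W=W_1$ a.e. Your trace-positivity idea ($\mathrm{Tr}(T^4)=\|T^2\|_{\mathrm{HS}}^2$) is the right spirit for why equality is rigid, but it has to enter through this compression inequality rather than through an additive decomposition of $t(C_4,\cdot)$.
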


\begin{proof}
A graphon $W$ can be viewed as an operator $T_W$ on $L^2([0,1])$, the $L^2$-space of functions from $[0,1]$,
defined as 
\[T_W(h)(x)=\int_{[0,1]} W(x,y)h(y)\diff y\]
for $h\in L^2([0,1])$.
The operator $T_W$ is self-adjoint and compact for any graphon $W$~\cite[Section 7.5]{bib-lovasz-book}.
Moreover, it has a discrete spectrum, all its eigenvalues $(\lambda_i)_{i\in\NN}$ are real and
it holds~\cite[Section 7.5]{bib-lovasz-book} that
\begin{equation} \label{eq-operlemC4count}
t(C_4,W)=\sum_{i=1}^{\infty}\lambda_i^4.
\end{equation} 

The map $\varphi :[0,1] \to [0,1]$ 
naturally yields a pullback embedding
$\varphi^*:L^2([0,1]) \to L^2([0,1])$,
where $\varphi^*(h)(x):=h(\varphi(x))$ for $h\in L^2([0,1])$.
Similarly, we define $\varphi^*(W)(x,y):=W(\varphi(x),\varphi(y))$ for a graphon $W$.
Let $K$ be the image of $L^2([0,1])$ under $\varphi^*$;
$K$ is a subspace of $L^2([0,1])$, and a standard argument shows that it is closed.
Equation \eqref{eq-operlemcond} implies that for any $h_1,h_2\in K$,
we have $\langle T_{\varphi^*(W_2)} h_1,h_2\rangle = \langle T_{W_1}h_1,h_2 \rangle $. Furthermore, if $h$ is orthogonal to $K$, then $T_{\varphi^*(W_2)}h=0$. Let $\Pi_K$ be the orthogonal projection onto $K$.
We then have that 
\[T_{\varphi^*(W_2)}=\Pi_K T_{W_1} \Pi_K. \]
This implies that if 
$(\lambda_i)_{i\in\NN}$ are the eigenvalues of $T_{W_1}$ and
$(\mu_i)_{i\in\NN}$ the eigenvalues of $T_{\varphi^*(W_2)}$, listed in non-increasing order according to their absolute values,
then $|\mu_i|\le|\lambda_i|$ for every $i\in\NN$, and
equality holds for every $i$ if and only if $T_{\varphi^*(W_2)}=T_{W_1}$. In particular,
\[\sum_{i\in\NN}\mu_i^4\le\sum_{i\in\NN}\lambda_i^4,\]
and equality holds if and only if
$T_{\varphi^*(W_2)}=T_{W_1}$.
By equation \eqref{eq-operlemC4count},
we do indeed have equality, so $T_{\varphi^*(W_2)}=T_{W_1}$, and therefore $\varphi^*(W_2)=W_1$ almost everywhere.
\end{proof}

We next state a property of weakly isomorphic graphons that
we will need in our arguments presented further in the paper.
For a graphon $W$, let $\omega(W)$ be the supremum of $|A|$ taken over all measurable sets $A\subseteq [0,1]$ such that
$W$ is equal to $1$ almost everywhere on $A\times A$.

\begin{lem}
\label{lm-omega}
If $W$ and $W'$ are two weakly isomorphic graphons,
then $\omega(W)=\omega(W')$.
\end{lem}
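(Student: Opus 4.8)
The plan is to reduce the statement to an invariant that is manifestly preserved by weak isomorphism, namely the density $t(C_4 \text{ with pendant structure})$ or, more simply, subgraph densities of cliques. The key observation is that $\omega(W)$ can be characterized in terms of the densities of complete graphs $K_n$ in $W$, which are preserved under weak isomorphism by definition.

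First I would establish that $\omega(W)$ is determined by the sequence $\bigl(d(K_n,W)\bigr)_{n\in\NN}$. In one direction, if $A\subseteq[0,1]$ is measurable with $W=1$ almost everywhere on $A\times A$, then a $W$-random graph on $n$ vertices, conditioned on all $n$ sample points landing in $A$ (an event of probability $|A|^n$), is complete; hence $d(K_n,W)\ge |A|^n$, so $d(K_n,W)^{1/n}\ge |A|$ and therefore $\liminf_{n\to\infty} d(K_n,W)^{1/n}\ge \omega(W)$. For the reverse inequality I would use the fact that $d(K_n,W)$ is, up to a vanishing combinatorial factor, the probability that $n$ independent uniform points $x_1,\dots,x_n$ satisfy $W(x_i,x_j)=1$ for all $i\neq j$ — more precisely it is $\int_{[0,1]^n}\prod_{i<j}W(x_i,x_j)\,dx_1\cdots dx_n$ minus lower-order terms coming from coincidences, which are negligible for the $\limsup$ of the $n$th root. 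Writing $Z=\{(x,y): W(x,y)=1\}$ (a symmetric measurable set) and letting $G$ be the "graph" on $[0,1]$ with edge set $Z$, this integral is exactly the probability that a random $n$-subset is a clique in $G$. A packing/density argument (or a direct application of the Kruskal–Katona or supersaturation philosophy in the measurable setting, or simply the observation that the clique number of a measurable graph equals $\limsup d(K_n)^{1/n}$) then shows $\limsup_{n\to\infty} d(K_n,W)^{1/n} = \omega(W)$. Combining the two bounds gives
\[
\omega(W) = \lim_{n\to\infty} d(K_n,W)^{1/n},
\]
which is the clean formula I am after.

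Once this formula is in hand, the lemma is immediate: if $W$ and $W'$ are weakly isomorphic, then $d(K_n,W)=d(K_n,W')$ for every $n$, so the two limits agree and $\omega(W)=\omega(W')$. Alternatively, and perhaps more in the spirit of the paper, one can bypass the exact formula: it suffices to show $\omega$ is invariant under the two operations that generate weak isomorphism, namely applying a measure-preserving map $\varphi:[0,1]\to[0,1]$ (replacing $W$ by $W^\varphi$) and changing $W$ on a null set. The latter is trivial since $\omega$ only depends on $W$ up to a.e. equality. For the former, if $W=1$ a.e.\ on $A\times A$ then $W^\varphi = 1$ a.e.\ on $\varphi^{-1}(A)\times\varphi^{-1}(A)$ and $|\varphi^{-1}(A)|=|A|$, giving $\omega(W^\varphi)\ge\omega(W)$; applying the characterization of Borgs et al.\ (two weakly isomorphic graphons become equal after pulling back along suitable measure-preserving maps) in both directions yields equality.

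The main obstacle is making the reverse inequality $\limsup_n d(K_n,W)^{1/n}\le \omega(W)$ rigorous, i.e.\ extracting a large set $A$ on which $W=1$ a.e.\ from the mere knowledge that many $n$-tuples of points span cliques in the $\{0,1\}$-reduct of $W$. One has to rule out the possibility that cliques of all sizes exist but no single positive-measure set is "complete"; this is a measurable analogue of the statement that a graph with no large clique has few cliques overall, and the honest way to handle it is a compactness or weak-$*$ limit argument: take near-optimal sets $A_n$ realizing the clique count for $K_n$ and pass to a limit of their (normalized) indicator measures, or alternatively observe that the $\{0,1\}$-valued symmetric function $\mathbf 1_Z$ is itself a graphon whose clique densities control everything, and invoke the known fact that for a graphon taking values in $\{0,1\}$ the quantity $\lim d(K_n)^{1/n}$ equals the measure of a maximal "complete" set. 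I would present the cleaner measure-preserving-invariance argument as the primary proof since it sidesteps this analytic subtlety entirely, and mention the clique-density formula as a remark.
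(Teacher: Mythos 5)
Your primary argument---invariance of $\omega$ under measure-preserving maps, combined with the Borgs--Chayes--Lov\'asz characterization of weak isomorphism---is the same strategy as the paper's, but it only establishes one of the two inequalities and the missing one is the hard one. From a clique set $A$ of $W$ you correctly obtain the clique set $\varphi^{-1}(A)$ of $W^\varphi$, giving $\omega(W^\varphi)\ge\omega(W)$. But then ``applying the characterization of Borgs et al.\ in both directions'' does not yield equality: if $W_1^{\varphi_1}=W_2^{\varphi_2}$ almost everywhere, your inequality gives $\omega(W_1)\le\omega(W_1^{\varphi_1})$ and $\omega(W_2)\le\omega(W_2^{\varphi_2})$, i.e.\ both inequalities point \emph{toward} the common pullback, so you only learn $\max(\omega(W_1),\omega(W_2))\le\omega(W_1^{\varphi_1})$ and cannot conclude $\omega(W_1)=\omega(W_2)$. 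What is genuinely needed is the reverse inequality $\omega(W^\varphi)\le\omega(W)$, i.e.\ a way to transport a clique set $B$ of $W^\varphi$ \emph{down} along $\varphi$ without losing measure; this is nontrivial because $B$ need not be of the form $\varphi^{-1}(A)$. The paper's proof is built around exactly this point: it first rewrites $\omega(W)$ as the supremum of $\|f\|_1$ over $[0,1]$-valued functions $f$ with $\int f(x)(1-W(x,y))f(y)\,dx\,dy=0$ (this fractional relaxation does not change the value, since such an $f$ forces $W=1$ a.e.\ on $\{f>0\}^2$ and $\|f\|_1\le|\{f>0\}|$), and then pushes $g=\mathbf{1}_B$ forward along $\varphi$ to the Radon--Nikodym derivative $f$ of the pushforward measure, which satisfies $\|f\|_1=\|g\|_1$ and inherits the vanishing of the quadratic form. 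The relaxation from sets to functions is essential here, because the pushforward of an indicator is in general only a $[0,1]$-valued density; your set-based formulation cannot carry out this step.

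Your alternative route via the formula $\omega(W)=\lim_n d(K_n,W)^{1/n}$ would indeed make the lemma immediate, and the lower bound $d(K_n,W)\ge|A|^n$ is fine, but the upper bound $\limsup_n d(K_n,W)^{1/n}\le\omega(W)$ is precisely the substantive content (extracting a positive-measure set on which $W=1$ a.e.\ from an abundance of cliques), and you explicitly defer it to a compactness argument you do not carry out, while simultaneously demoting this route to a remark. As it stands, neither branch of the proposal contains a complete proof of the nontrivial direction.
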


\begin{proof}
By the results from \cite{bib-borgs10+} on weakly isomorphic graphons,
it suffices to show that
if $W$ is a graphon and $\varphi:[0,1] \to [0,1]$ is a measure-preserving map, then $\omega(W)=\omega(W^\varphi)$.
To show this, we use that 
\[\omega(W)=\sup\left\{\|f\|_1\bigg|f\in L^1([0,1]),0\le f\le 1,\int_{[0,1]^2} f(x)\left(1-W(x,y)\right)f(y)\dd x\dd y=0\right\}.\]
Every function $f$ satisfying that
 \[\int_{[0,1]^2} f(x)\left(1-W(x,y)\right)f(y)\dd x\dd y=0\] 
also satisfies that
\[\int_{[0,1]^2} f(\varphi(x))\left(1-W^\varphi(x,y)\right)f(\varphi(y))\dd x\dd y=0.\]
Since it holds that $\|f(\varphi(\cdot))\|_1=\|f\|_1$, we obtain that $\omega(W^\varphi)\ge\omega(W)$.
Conversely,
if $g:[0,1] \to [0,1]$ is a measurable function,
there exists a measurable function $f:[0,1] \to [0,1]$ such that
\[\int_A f(x)\dd x=\int_{\varphi^{-1}(A)}g(x)\dd x\]
for any measurable set $A$.
Indeed, the function $f$ is the Radon-Nikodym derivative of the pushforward measure $\varphi_*(\nu)$ according to the Lebesgue measure,
where the measure $\nu$ is defined as $\nu(X)=\int_X g$.
Note that $\|g\|_1=\|f\|_1$.
Furthermore, if a function $g$ satisfies
\[\int_{[0,1]^2} g(x)\left(1-W^\varphi(x,y)\right)g(y)\dd x\dd y=0,\]
then the corresponding function $f$ satisfies that
\[\int_{[0,1]^2} f(x)\left(1-W(x,y)\right)f(y)\dd x\dd y=0.\]
This implies $\omega(W) \ge \omega(W^\varphi)$, and
we can conclude that $\omega(W)=\omega(W^\varphi)$ as desired.
\end{proof}

We close this subsection with two propositions that we need in our exposition.
\begin{prop}
\label{densall}
Let $H$ be a graph on $n$ vertices.
There exist connected graphs $H_1,\ldots,H_k$ with at most $n$ vertices each and a polynomial $p$ in $k$ variables such that
the following holds for every graphon $W$:
\[d(H,W)=p\left(d(H_1,W),\ldots,d(H_k,W)\right).\]
\end{prop}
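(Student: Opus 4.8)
The plan is to express the density $d(H,W)$ in terms of densities of connected graphs by relating induced density counting to subgraph homomorphism counting, where the behavior under disjoint unions is multiplicative. First I would recall the relationship between the \emph{induced} density $d(H,W)$ and the (labeled or unlabeled) \emph{homomorphism} densities $t(F,W)$ and the injective homomorphism densities $t_{\mathrm{inj}}(F,W)$. The key classical facts are: (i) for any graph $H$ on $n$ vertices, $t_{\mathrm{inj}}(H,W)$ and $d(H,W)$ are related by an invertible linear transformation over graphs on the same vertex set $[n]$ (M\"obius inversion over the lattice of graphs containing $H$ as a spanning subgraph — adding edges), so each $d(H,W)$ is a finite integer combination of $t_{\mathrm{inj}}(F,W)$ over supergraphs $F$ of $H$ on $n$ vertices; and (ii) $t_{\mathrm{inj}}(F,W)$ differs from $t(F,W)$ by lower-order terms, specifically $t(F,W)$ is a convex combination of $t_{\mathrm{inj}}(F',W)$ over quotients $F'$ of $F$ (identifying vertices), each quotient having at most $n$ vertices, and conversely $t_{\mathrm{inj}}$ is an integer combination of $t$'s over quotients. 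Composing these, every $d(H,W)$ is a polynomial — in fact an integer-coefficient linear combination — of terms $t(F,W)$ where $F$ ranges over graphs with at most $n$ vertices.

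Next I would reduce from homomorphism densities $t(F,W)$ of arbitrary graphs to those of \emph{connected} graphs. The standard multiplicativity fact is that if $F$ has connected components $F^{(1)},\dots,F^{(r)}$, then
\[
t(F,W)=\prod_{j=1}^r t\bigl(F^{(j)},W\bigr),
\]
since a homomorphism from a disjoint union is the same as a tuple of independent homomorphisms from the components, and the sampling points in the definition of $t(\cdot,W)$ are chosen independently. Each component $F^{(j)}$ is connected and has at most $n$ vertices. Substituting this product expression for every $t(F,W)$ appearing in the linear combination from the previous paragraph, we obtain $d(H,W)$ as a polynomial in the quantities $t(H_1,W),\dots,t(H_k,W)$, where $H_1,\dots,H_k$ is the (finite) list of all connected graphs on at most $n$ vertices that arise as components. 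Finally, since $t(H_i,W)$ is itself a polynomial in induced densities $d(H',W)$ over graphs $H'$ with exactly $|H_i|\le n$ vertices (again by the inverse M\"obius/quotient relations, which only involve graphs on at most $n$ vertices), we can re-expand to write $d(H,W)=p\bigl(d(H_1,W),\dots,d(H_k,W)\bigr)$ purely in terms of induced densities of connected graphs on at most $n$ vertices, as required; alternatively one simply keeps the statement at the level of $t$-densities and notes $t(H_i,W)$ is a polynomial in $d(\cdot,W)$'s, which the proposition permits since it only asks for \emph{some} polynomial $p$ and \emph{some} connected graphs $H_i$.

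I expect the main technical obstacle to be bookkeeping the two M\"obius-type inversions correctly and confirming that no graph on more than $n$ vertices is ever introduced: adding edges to $H$ keeps the vertex set at $n$, taking quotients only decreases the number of vertices, and taking connected components only decreases it further, so the vertex bound $n$ is preserved throughout — but this needs to be checked carefully at each of the three steps. A secondary point is that the transformation between $t_{\mathrm{inj}}$ and $t$ (and between $d$ and $t_{\mathrm{inj}}$) must be stated with the correct direction of the partial order and shown to be invertible; this is standard (e.g.\ it appears in Lov\'asz's book), so I would cite it rather than reprove it. Everything else — multiplicativity over components, substitution, and collecting terms into a single polynomial $p$ — is routine algebra once these inversions are in hand.
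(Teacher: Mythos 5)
Your proof is correct, but it takes a genuinely different route from the paper's. The paper argues by induction on the number of connected components of $H$: writing $H$ as the disjoint union of a component $H'$ and the rest $H''$, it uses the combinatorial identity expressing $d(H',W)\cdot d(H'',W)$ as a linear combination of $d(G,W)$ over $n$-vertex graphs $G$ that are either $H$ itself or have strictly fewer components than $H$, and then solves for $d(H,W)$ and applies the induction hypothesis to the remaining terms. This stays entirely within induced densities and needs no homomorphism-density machinery. Your argument instead passes through homomorphism densities $t(\cdot,W)$, where multiplicativity over components is immediate, and converts back and forth via the standard M\"obius-type inversions; note that for graphons the detour through $t_{\mathrm{inj}}$ is essentially vacuous, since $t_{\mathrm{inj}}(F,W)=t(F,W)$ (coincidences among the sampled points have measure zero), which collapses your step (ii). The one point you should make explicit in the final re-expansion is that \emph{connectivity}, not just the vertex bound, is preserved: quotients of a connected graph are connected and spanning supergraphs of a connected graph are connected, so expanding $t(H_i,W)$ for connected $H_i$ yields only induced densities of connected graphs on at most $n$ vertices, which is what the proposition requires (your stated justification covers only the bound on the number of vertices). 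With that observation, and after enlarging the list $H_1,\ldots,H_k$ to include all connected graphs arising in these expansions, your argument is complete; the paper's induction is more elementary and self-contained, while yours leans on standard identities from Lov\'asz's book but makes the multiplicative structure over components more transparent.
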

\begin{proof}
We proceed by induction on the number of connected components of $H$.
The base of induction is the case that $H$ is connected,
in which case we set $k=1$, $H_1=H$ and $p$ to be the identity.
Suppose that $H$ is not connected;
let $H'$ be one of its components and let $H''$ be the subgraph of $H$ induced by the vertices not in $H'$.
Further, let $n'$ be the number of vertices of $H'$.
Observe that
\[d(H',W)\cdot d(H'',W)=
  \sum_{G}\sum_{\substack{A\subseteq V(G), |A|=n'\\
                G[A]\approx H'\\
		G[V(G)\setminus A]\approx H''}}\binom{n}{n'}^{-1}d(G,W),\]
where the sum is taken over all graphs $G$ on $n$ vertices,
$G[A]\approx H'$ represents that the subgraph of $G$ induced by $A$ is isomorphic to $H'$, and
$G[V(G)\setminus A]\approx H''$ represents that the subgraph of $G$ induced by $V(G)\setminus A$ is isomorphic to $H''$.
Note that an $n$-vertex graph $G$ has a subset $A$ of $n'$ vertices such that $G[A]\approx H'$ and $G[V(G)\setminus A]\approx H''$
only if $G$ is $H$ or $G$ has fewer components than $H$.
It follows that $d(H,W)$ can be expressed as a linear combination of 
$d(G,W)$, where $G$ ranges through $n$-vertex graphs with fewer components than $H$, and
$d(H',W)\cdot d(H'',W)$;
the coefficients of this linear combination do not depend on $W$.
The proposition now follows.
\end{proof}
The second proposition is a well-known measure-theoretic result which we will apply throughout the paper.
It follows from the Monotone Reordering Theorem~\cite[Proposition A.19]{bib-lovasz-book} and
the fact that any standard probability space is isomorphic to the unit interval.
\begin{prop}
\label{prop-MRT}
Let $X$ be a non-null measurable subset of $[0,1)$ and $h:X\to\RR$ a measurable function.
There exists a measure-preserving map $\varphi:X\to [0,|X|)$ and
a non-decreasing function $f:[0,|X|)\to\RR$ such that
$h(x)=f(\varphi(x))$ for almost every $x\in X$.
\end{prop}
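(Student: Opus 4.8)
The plan is to transport the statement to the unit interval via the isomorphism theorem for standard probability spaces, apply there the Monotone Reordering Theorem~\cite[Proposition~A.19]{bib-lovasz-book}, and pull the resulting objects back to $X$.

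First, I would equip $X$ with the restriction of the Lebesgue measure, which has total mass $|X|>0$; after normalisation this is an atomless standard probability space, hence isomorphic modulo null sets to $[0,1)$ with the Lebesgue measure, and therefore $\bigl(X,\text{Lebesgue}\bigr)$ is isomorphic modulo null sets to $\bigl([0,|X|),\text{Lebesgue}\bigr)$. Fix such an isomorphism, given by co-null sets $X_0\subseteq X$ and $Y_0\subseteq[0,|X|)$ together with a measure-preserving bijection $\beta\colon X_0\to Y_0$ (whose inverse is automatically measure-preserving as well), and extend $\beta$ arbitrarily to all of $X$. Next, I would set $h_0:=h\circ\beta^{-1}$ on $Y_0$, extend it arbitrarily to $[0,|X|)$, and apply the Monotone Reordering Theorem to $h_0$ — after the harmless affine change of variables identifying $[0,|X|)$ with $[0,1)$ and rescaling the outputs back by $|X|$ — to obtain a measure-preserving map $\psi\colon[0,|X|)\to[0,|X|)$ and a non-decreasing function $f\colon[0,|X|)\to\RR$ with $h_0=f\circ\psi$ almost everywhere.

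Finally, I would take $\varphi:=\psi\circ\beta\colon X\to[0,|X|)$. Because $\beta$ is a measure-preserving bijection, $\psi$ is measure-preserving, and $X\setminus X_0$ is null, $\varphi$ is measure-preserving: $|\varphi^{-1}(A)|=|\beta^{-1}(\psi^{-1}(A))|=|\psi^{-1}(A)|=|A|$ for every measurable $A\subseteq[0,|X|)$. Likewise, since $\beta$ is measure-preserving, the equality $h_0=f\circ\psi$ holds at $\beta(x)$ for almost every $x\in X_0$, and for each such $x$ one computes $f(\varphi(x))=f(\psi(\beta(x)))=h_0(\beta(x))=h(\beta^{-1}(\beta(x)))=h(x)$; as $X\setminus X_0$ is null, this gives $h=f\circ\varphi$ almost everywhere on $X$, which is the assertion. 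The only step carrying genuine content is the Monotone Reordering Theorem itself — it builds $f$ from the generalised inverse of the distribution function $t\mapsto|\{s\in[0,|X|):h_0(s)\le t\}|$ and builds $\psi$ by redistributing the Lebesgue measure uniformly within the (possibly positive-measure) level sets of $h_0$ — and I would simply cite it; the rest is routine bookkeeping about composing maps that are measure-preserving modulo null sets, so I do not anticipate any real difficulty.
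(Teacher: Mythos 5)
Your argument is correct and is precisely the route the paper indicates: it states that the proposition "follows from the Monotone Reordering Theorem and the fact that any standard probability space is isomorphic to the unit interval," which is exactly your transport-via-isomorphism-then-reorder scheme. The bookkeeping you supply (extending $\beta$ and $h_0$ off null sets, checking that the composite $\psi\circ\beta$ is measure-preserving) is sound and fills in the routine details the paper omits.
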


\subsection{Partitioned graphons and decorated constraints}

The most direct way of showing that a graphon $W$ is finitely forcible
is by explicitly providing the forcing family of graphs $H_1,\ldots,H_m$ and their densities $d_1,\ldots,d_m$ and
analyzing all graphons $W'$ such that $d\left(H_i,W'\right)=d_i$.
However, this approach often becomes impractical when $m$ is very large and, even more so, when
$H_1,\dots,H_m$ depend on $\varepsilon$, as is required to prove Theorem~\ref{thm1}.
We now introduce the method of decorated constraints that was developed in~\cite{bib-inf, bib-comp},
which allows us to use more advanced constraints to establish that a graphon is finitely forcible.

A \emph{density expression} is a formal polynomial in graphs; i.e., 
graphs and real numbers  are density expressions, and
if $D_1$ and $D_2$ are density expressions, then so are $D_1+D_2$ and $D_1\cdot D_2$.
A density expression $D$ can be \emph{evaluated} with respect to a graphon $W$ by replacing every graph $H$ in $D$ with $d(H,W)$.
A \emph{constraint} is an equality between two density expressions.
A constraint is \emph{satisfied} by a graphon $W$ if both density expressions evaluated with respect to $W$ are equal.
A simple example of a constraint is the equality $H=d$, which is satisfied by a graphon $W$ if and only if $d(H,W)=d$.

If $\mathcal{C}$ is a finite set of constraints and $W$ is the unique graphon, up to weak isomorphism, that
satisfies all of the constraints in $\mathcal{C}$, then $W$ is finitely forcible.
Indeed, $W$ is the unique graphon, up to weak isomorphism, with the density of $H$ equal to $d(H,W)$ for all graphs $H$ appearing in a constraint in $\mathcal{C}$.
We will often say that the constraints contained in $\mathcal{C}$ \emph{force} the graphon $W$.

A graphon $W$ is \emph{partitioned}
if there exist positive reals $a_1,\ldots,a_k$ summing to one and distinct reals $d_1,\ldots,d_k\in[0,1]$ such that
the set $A_i$ of vertices of $W$ with degree $d_i$ has measure~$a_i$ for all $i\in [k]$.
The sets $A_i$ are called \emph{parts} and the \emph{degree} of a part $A_i$ is $d_i$.
We will abuse the notation here and if $W$ and $W'$ are two partitioned graphons with parts of measures $a_i$ and degrees $d_i$,
we will use the same letters to denote the corresponding parts of $W$ and $W'$.
This is technically incorrect since the part $A_i$ can be a different subset of $[0,1]$ in $W$ and $W'$
but we will make sure that the graphon that we have in mind is always clear from the context. The property of being a partitioned graphon can be forced in the following sense; see~\cite[Lemma 2]{bib-comp} for a proof.

\begin{lem}\label{lemma_partitioned}
Let $a_1,\ldots,a_k$ be positive reals summing to one and $d_1,\ldots,d_k$ distinct reals from $[0,1]$.
There exists a finite set of constraints $\mathcal{C}$ such that
a graphon $W$ satisfies all constraints in $\mathcal{C}$ if and only if
$W$ is a partitioned graphon with parts of measures $a_1,\ldots,a_k$ and degrees $d_1,\ldots,d_k$.
\end{lem}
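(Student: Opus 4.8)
The plan is to force the entire degree distribution of $W$ to be the discrete probability measure $\mu=\sum_{i=1}^{k}a_i\delta_{d_i}$ on $[0,1]$, and to do so by pinning down finitely many of its moments. For a graphon $W$ and $j\in\NN$, set $M_j(W)=\int_{[0,1]}\deg_W(x)^j\diff x$. Since $\deg_W(x)=\int_{[0,1]}W(x,y)\diff y$, Fubini's theorem gives $M_j(W)=\int_{[0,1]^{j+1}}W(x,y_1)\cdots W(x,y_j)\diff x\diff y_1\cdots\diff y_j$, which is exactly the homomorphism density $t(K_{1,j},W)$ of the star with $j$ edges. A standard inclusion–exclusion argument (essentially the same bookkeeping as in Proposition~\ref{densall}) expresses any homomorphism density $t(F,W)$ as a fixed polynomial, with coefficients not depending on $W$, in the subgraph densities $d(H,W)$ over graphs $H$ with $|H|\le|F|$; hence for each $j$ there is a density expression $D_j$ that evaluates to $M_j(W)$ at every graphon $W$.

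I would then let $\mathcal{C}$ consist of the $2k$ constraints $D_j=c_j$ for $j\in[2k]$, where $c_j=\sum_{i=1}^{k}a_id_i^{\,j}$ is the $j$-th moment of $\mu$. The forward implication is immediate: if $W$ is a partitioned graphon with parts of measures $a_1,\ldots,a_k$ and degrees $d_1,\ldots,d_k$, then the pushforward of the Lebesgue measure on $[0,1]$ under $x\mapsto\deg_W(x)$ is precisely $\mu$, so $M_j(W)=c_j$ for every $j$ and all constraints in $\mathcal{C}$ hold.

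For the converse, suppose $W'$ satisfies $\mathcal{C}$ and let $\nu$ be the pushforward of the Lebesgue measure on $[0,1]$ under $x\mapsto\deg_{W'}(x)$, which is a probability measure on $[0,1]$ with $\int_{[0,1]}t^j\diff\nu(t)=M_j(W')=c_j$ for $j=1,\ldots,2k$ (and trivially for $j=0$). The key step is to deduce $\nu=\mu$ from this. Consider the nonnegative polynomial $q(t)=\prod_{i=1}^{k}(t-d_i)^2$ of degree $2k$. Since $\mu$ is supported on $\{d_1,\ldots,d_k\}$ we have $\int q\diff\mu=0$, and because $q$ is a linear combination of monomials of degree at most $2k$ we also get $\int q\diff\nu=\int q\diff\mu=0$; as $q\ge 0$ on $[0,1]$, this forces $\nu$ to be concentrated on the zero set $\{d_1,\ldots,d_k\}$ of $q$. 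Writing $\nu=\sum_{i=1}^{k}b_i\delta_{d_i}$ with $b_i\ge 0$ and comparing moments of orders $0,1,\ldots,k-1$ yields the Vandermonde system $\sum_{i=1}^{k}b_id_i^{\,j}=\sum_{i=1}^{k}a_id_i^{\,j}$, whose unique solution is $b_i=a_i$ since the $d_i$ are distinct. Hence $\nu=\mu$, so for each $i$ the set of vertices of $W'$ of degree $d_i$ has measure $a_i$, and these $k$ sets cover $[0,1]$ up to a null set because $\sum_ia_i=1$; that is, $W'$ is a partitioned graphon with the prescribed parameters.

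I expect the only delicate point to be the moment argument — in particular, recognizing that moments up to order $2k$, the degree of $\prod_{i=1}^{k}(t-d_i)^2$, are both necessary and sufficient, and that the nonnegativity of this polynomial together with the matching $2k$-th moment is what pins the support down to $\{d_1,\ldots,d_k\}$. Everything else is routine: the identification of $M_j(W)$ with a star homomorphism density, and the standard fact that such a density is genuinely a density expression in the sense used here.
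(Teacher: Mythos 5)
Your proof is correct, and it is essentially the same argument as the one the paper defers to (Lemma~2 of~\cite{bib-comp}): encode the moments of the degree distribution as star densities, use the nonnegative polynomial $\prod_{i}(t-d_i)^2$ to confine the distribution to $\{d_1,\ldots,d_k\}$, and then recover the weights from the lower moments. No gaps.
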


Consider a partitioned graphon $W$ and let $\PP$ be the set of its parts.
The \emph{relative degree} of a vertex $x\in[0,1]$ in $W$ with respect to a non-empty set $\XX\subseteq\PP$ of parts is defined as
\[\deg_W^\XX(x)=\left|\bigcup\XX\right|^{-1}\cdot\int_{\bigcup\XX}W(x,y)\diff y.\]
Similarly, the \emph{relative neighbourhood} of $x\in[0,1]$ with respect to $\XX$, which is denoted by $N_W^\XX(x)$, is the set $N_W(x)\cap\bigcup\XX$.
If $\XX=\{X\}$ for some part $A$, then we simply write $\deg_W^X(x)$ and $N_W^X(x)$.
As before, if the graphon $W$ is clear from context, then we omit the subscripts.
For two non-empty subsets $\XX_1,\XX_2\subseteq\PP$,
the restriction of the graphon $W$ to $\bigcup\XX_1\times\bigcup\XX_2$ will be referred to as the \emph{tile} $\XX_1\times\XX_2$.
If both $\XX_1$ and $\XX_2$ are singletons, we call the tile \emph{simple}; otherwise, it is \emph{composite}. 

We now introduce a formally stronger (but technically equivalent) version of constraints, which we call decorated constraints.
These are similar to decorated constraints used in~\cite{bib-reg,ckm,bib-inf, bib-comp} except that we will allow 
vertices of graphs appearing in constraints to be assigned to multiple parts, 
as opposed to just a single part.
We discuss the difference in more detail further.
We will always have a particular set $\PP$ of parts in mind when working with decorated constraints.
A \emph{decorated} graph $G$ is a graph with $0\leq m\leq |G|$ distinguished vertices labelled from $1$ to $m$,
which are called \emph{roots}, and
with every vertex $v$ (including the roots) assigned a non-empty subset of $\PP$,
which is called the \emph{decoration} of $v$.
If the decoration of a vertex is a single element set, e.g., $\{A\}$, we just write $A$ as the decoration to simplify our notation.
Two decorated graphs $G_1$ and $G_2$ are \emph{compatible}
if the subgraphs induced by their roots are isomorphic, respecting both the labels of roots and the decorations assigned to them.
A \emph{decorated density expression} is a formal polynomial in decorated graphs such that
all graphs in the expression are mutually compatible, and
a \emph{decorated constraint} is an equality between two decorated density expressions such that all graphs in the expression are mutually compatible.

Let $W$ be a partitioned graphon with parts $\PP$ and $C$ a decorated constraint of the form $D=0$ where $D$ is a decorated density expression.
We now describe what we mean when we say that the graphon $W$ satisfies $C$.
Let $H_0$ be the decorated graph induced by the roots $v_1,\ldots,v_m$ of the decorated graphs in $C$. Call an $m$-tuple $\left(x_1,\ldots,x_m\right)\in[0,1]^m$ \emph{feasible} if each $x_i$ belongs to one of the parts that $v_i$ is decorated with,
$W\left(x_i,x_j\right)>0$ for every edge $v_iv_j\in E\left(H_0\right)$ and $W\left(x_i,x_j\right)<1$ for every non-edge $v_iv_j\notin E\left(H_0\right)$. Given a feasible $m$-tuple $\left(x_1,\ldots,x_m\right)\in[0,1]^m$, the evaluation of $D$ at the $m$-tuple is obtained by replacing each decorated graph $H$ with the probability that a $W$-random graph of order $|H|$ is the graph $H$, conditioned on the event that
the roots are chosen as the vertices $x_1,\ldots,x_m$, that they induce the graph $H_0$, and that
each non-root vertex is chosen from the union of the parts in its decoration.
The graphon $W$ \emph{satisfies} the constraint $C$ if
for almost every feasible $m$-tuple,
the evaluation of $D$ is equal to zero.
We say that $W$ satisfies a decorated constraint of the form $D=D'$ if it satisfies $D-D'=0$.

We next describe a convention of depicting decorated constraints that we use in this paper,
which is analogous to that used in~\cite{bib-reg,ckm}. The roots of decorated graphs will be represented by squares and the non-root vertices by circles. The decoration of every vertex will be depicted as a label inside the square or circle. If $X$ is any letter such that $X_1, X_2, \ldots$ are parts, then $X_*$ will denote the label $\{X_1,X_2,\ldots\}$. For example, if $B_1, B_2$ are all the parts with the letter $B$, then $B_*$ will refer to the label $\{B_1,B_2\}$. The roots in all decorated graphs appearing in a constraint will be placed on the same mutual positions; i.e., the corresponding roots of different graphs in the constraint are on the same respective positions. Edges are represented as solid lines between vertices and non-edges are represented as dashed lines. The absence of any line between two root vertices indicates that the constraint should hold for both cases when the edge between the root vertices is present and when it is not present. Finally, the absence of a line between a non-root vertex and another vertex represent the sum of decorated graphs with this edge present and without this edge. Thus, if $k$ such lines are absent in a decorated graph, the figure represents the sum of $2^k$ decorated graphs.

We now give an example of evaluating decorated constraints, which is illustrated in Figure~\ref{decor_example}.
We consider the graphon $W$ depicted in the left part of the figure:
the graphon $W$ has three parts $A$, $B_1$ and $B_2$, each of measure $1/3$.
The densities between the parts are as given in the figure.
In particular, the degree of $A$ is $2/3$, the degree of $B_1$ is $11/18$ and the degree of $B_2$ is $1$.
We next consider the decorated graph $H$ depicted in the right part of Figure~\ref{decor_example}.
The graph $H$ has two roots $v_1$ and $v_2$ that are adjacent and decorated with $B_1$ and $A$, respectively, and
it has two non-root vertices $v_3$ and $v_4$ that are also adjacent and decorated with $\{B_1,B_2\}$ (denoted $B_*$) and $A$, respectively.
The vertex $v_3$ is adjacent to both roots and $v_4$ is adjacent only to $v_1$.
The probability described in the previous paragraph is independent of the choice of $x_1$ and $x_2$ in $B_1$ and $A$ and is equal to $13/96$.
In particular, the graphon $W$ satisfies the decorated constraint $H=13/96$ depicted in Figure~\ref{decor_example}.

\begin{figure}
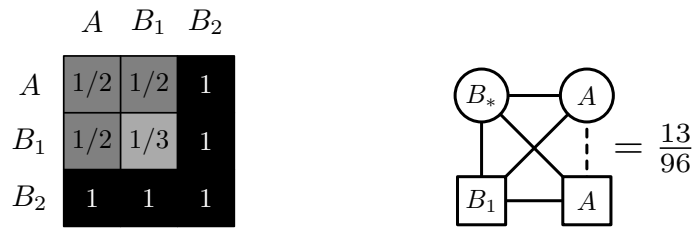

\begin{center}
\hskip -60mm \epsfxsize 3cm \epsfbox{epsuniversal-47.mps} 
\vskip -22mm \hskip 60mm  \epsfbox{epsuniversal-48.mps}		
\end{center}
\caption{An example of evaluating a decorated constraint.}
\label{decor_example}
\end{figure}

In~\cite[Lemma 3]{bib-comp}, it was shown that decorated constraints where each vertex is decorated with a single element set are equivalent to (ordinary) constraints.
Let us call such decorated constraints \emph{simple}; i.e.,
a decorated constraint is simple if all decorations appearing in it are single element sets.
We now argue that each decorated constraint is equivalent to a set of simple decorated constraints.
If a decorated graph $H$ contains a non-root vertex $v$ decorated by a set of parts $\XX$,
we may replace $H$ with a convex combination of graphs $H$ decorated by elements of $\XX$,
where the coefficients are proportional to the measures of the parts from $\XX$.
If one of the roots, say $v$, appearing in a decorated constraint is decorated by a set $\XX$,
we consider all decorated constraints with $v$ labelled by elements of $\XX$.
In this way, we can convert any decorated constraint to an equivalent set of simple decorated constraints.
Hence, we can conclude, using~\cite[Lemma 3]{bib-comp}, that the following holds.

\begin{prop}\label{lem_set_decorated}
Fix the number of parts and their sizes and degrees.
For every decorated constraint $C$,
there exists a finite collection of constraints $\CC'$ such that
a partitioned graphon $W$ satisfies $C$ if and only if it satisfies $\CC'$.
\end{prop}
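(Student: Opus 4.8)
The plan is to turn the informal reduction in the paragraphs preceding the statement into a proof: transform $C$ into a finite collection of \emph{simple} decorated constraints (ones in which every decoration is a one-element set) and then apply \cite[Lemma 3]{bib-comp}, which replaces each simple decorated constraint by a finite set of ordinary constraints; a finite union of finite sets is finite, so this produces $\CC'$. The first step is bookkeeping with the drawing conventions. Expanding them does nothing mathematically: the absence of a line between a non-root vertex and another vertex stands for the sum of the two decorated graphs (edge present, edge absent), and the absence of a line between two roots means the constraint is asserted for both root-graphs; carrying out all such expansions replaces $C$ by a finite set of decorated constraints in each of which every pair of vertices is declared an edge or a non-edge and every decoration is written as an explicit subset of $\PP$. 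Hence it suffices to treat a single fully specified decorated constraint $D=0$.

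Next I would remove set-valued decorations on non-root vertices. Suppose a decorated graph $H$ in $D$ has a non-root vertex $v$ decorated by $\XX=\{X_1,\dots,X_t\}$. Unwinding the definition of the evaluation of $D$ at a feasible $m$-tuple $(x_1,\dots,x_m)$, the value attached to the symbol $H$ is the probability that a $W$-random graph of order $|H|$ equals $H$, conditioned on the roots being $x_1,\dots,x_m$, on the roots inducing $H_0$, and on each non-root vertex being sampled uniformly from the union of the parts of its decoration. Sampling $x_v$ uniformly from $\bigcup\XX$ is the same as first picking an index $i$ with probability $|X_i|/\bigl|\bigcup\XX\bigr|$ and then sampling $x_v$ uniformly from $X_i$, so the value attached to $H$ equals $\sum_{i=1}^{t}\bigl(|X_i|/\bigl|\bigcup\XX\bigr|\bigr)$ times the value attached to $H_i$, where $H_i$ is $H$ with $v$ re-decorated by $\{X_i\}$. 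Therefore, replacing the symbol $H$ in $D$ by the formal convex combination $\sum_{i=1}^{t}\bigl(|X_i|/\bigl|\bigcup\XX\bigr|\bigr)H_i$ does not change the evaluation at any feasible tuple, hence does not change the set of partitioned graphons satisfying $D=0$. Because the number of parts and their sizes are fixed, the coefficients $|X_i|/\bigl|\bigcup\XX\bigr|$ are constants independent of $W$, so the new expression is a legitimate decorated density expression; and since nothing about the roots was altered, mutual compatibility of the graphs in the expression is preserved. Iterating over all non-root vertices and all decorated graphs in $D$ yields an equivalent decorated constraint in which every non-root decoration is a singleton.

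It remains to handle set-valued decorations on the roots. Let $v_1,\dots,v_m$ be the roots and suppose $v_j$ is decorated by $\XX_j$. A feasible $m$-tuple requires $x_j\in\bigcup\XX_j$ for each $j$, and the parts partition $[0,1]$ up to a null set, so up to a null set the feasible tuples split according to which single part $X^{(j)}\in\XX_j$ contains $x_j$; on the piece corresponding to a choice $(X^{(1)},\dots,X^{(m)})$, the requirement that the evaluation of $D$ vanish for almost every feasible tuple is precisely the statement that $W$ satisfies the decorated constraint obtained from $D=0$ by re-decorating each $v_j$ with $\{X^{(j)}\}$ (for choices making that piece null the constraint is vacuously satisfied, which is harmless). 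Consequently $W$ satisfies $D=0$ if and only if it satisfies each of the finitely many simple decorated constraints so obtained. Applying \cite[Lemma 3]{bib-comp} to each of them and taking the union of the resulting finite sets of ordinary constraints yields the desired finite collection $\CC'$; if one also wishes $\CC'$ to force the partition structure itself, one appends the constraints provided by Lemma~\ref{lemma_partitioned}. The only delicate points are the measure-theoretic bookkeeping in the two splittings — the exceptional sets of infeasible or boundary tuples are null because the parts cover $[0,1]$ up to measure zero — and the observation that the substitution coefficients depend only on the fixed part sizes and not on $W$; neither is a real obstacle, which is why this is a proposition rather than a theorem.
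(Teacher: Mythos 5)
Your proposal is correct and follows essentially the same route as the paper: replace set-decorated non-root vertices by convex combinations weighted by part measures, split set-decorated roots into all single-part choices, and then invoke \cite[Lemma 3]{bib-comp} to convert the resulting simple decorated constraints into ordinary ones. The paper gives exactly this reduction (only more tersely, in the paragraph preceding the proposition), so no further comment is needed.
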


We next describe how decorated constraints can be used to embed a finitely forcible graphon inside another graphon.
Suppose that $W_0$ is a finitely forcible graphon that is forced by constraints $H_i=d_i$ for $i\in[k]$, where $H_1,\ldots,H_k$ are graphs and $d_1,\ldots,d_k$ are their densities.
If $W$ is a partitioned graphon and $A$ one of its parts,
replacing each $H_i$ with the decorated graph where each vertex is decorated with $A$ results in a set of constraints that
are satisfied if and only if the subgraphon of $W$ induced by $A$ is weakly isomorphic to $W_0$.
The same holds if instead of a single part $A$ we consider a set of parts.
We state this observation as a separate lemma.

\begin{lem}\label{lemma_part}
Let $W_0$ be a finitely forcible graphon, $\PP$ a set of parts and $\XX$ a non-empty subset of $\PP$.
There exists a finite set $\CC$ of decorated constraints such that
every partitioned graphon $W$ with parts $\PP$ satisfies $\CC$ if and only if
the subgraphon of $W$ induced by $\bigcup \XX$ is weakly isomorphic to $W_0$.
\end{lem}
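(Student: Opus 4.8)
The plan is to take $\CC$ to be the family of decorated constraints obtained from a forcing family of $W_0$ by decorating all vertices with the set $\XX$, exactly as sketched in the paragraph preceding the statement but with a set of parts in place of a single part. Since $W_0$ is finitely forcible, I would fix graphs $H_1,\dots,H_k$ such that every graphon $W'$ with $d(H_i,W')=d(H_i,W_0)$ for all $i\in[k]$ is weakly isomorphic to $W_0$, put $d_i:=d(H_i,W_0)$, and let $H_i^{\XX}$ be the decorated graph on vertex set $V(H_i)$ and edge set $E(H_i)$ with no roots and with every vertex decorated by $\XX$. Then $\CC:=\{\,H_i^{\XX}=d_i:i\in[k]\,\}$ is a finite set of decorated constraints, and all the content of the lemma is in identifying the partitioned graphons that satisfy $\CC$.

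The key step is the claim that, for every partitioned graphon $W$ with parts $\PP$, the evaluation of $H_i^{\XX}$ with respect to $W$ equals $d(H_i,W_U)$, where $W_U$ is a subgraphon of $W$ induced by $U:=\bigcup\XX$. I would prove this by unwinding both definitions and matching them through a change of variables. On one side, since $H_i^{\XX}$ has no roots, the only feasible tuple is the empty one, and the evaluation of $H_i^{\XX}$ is by definition the probability that $|H_i|$ independent uniformly random points of $[0,1]$, conditioned on all lying in $U$, together with edges sampled independently with probabilities given by $W$, induce a graph isomorphic to $H_i$; by independence this is the same as sampling the $|H_i|$ points uniformly from $U$. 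On the other side, writing $\varphi_1\colon U\to[0,|U|)$ and $\varphi_2\colon U\to U$ for the measure-preserving maps witnessing that $W_U$ is a subgraphon of $W$ induced by $U$, one pushes forward the uniform measure on $U$ by $x\mapsto|U|^{-1}\varphi_1(x)$ to the uniform measure on $[0,1)$, uses the defining identity $W_U(|U|^{-1}\varphi_1(x),|U|^{-1}\varphi_1(y))=W(\varphi_2(x),\varphi_2(y))$, and then uses that $\varphi_2$ is measure-preserving to see that $d(H_i,W_U)$ equals precisely the probability just described. This also shows that $d(H,W_U)$ depends only on $W$ and $U$ and not on the particular subgraphon chosen, so that the phrase ``the subgraphon of $W$ induced by $\bigcup\XX$ is weakly isomorphic to $W_0$'' in the statement is unambiguous.

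Granting the claim, the lemma is immediate: a partitioned graphon $W$ with parts $\PP$ satisfies $\CC$ if and only if $d(H_i,W_U)=d_i$ for every $i\in[k]$, which by the choice of $H_1,\dots,H_k$ is equivalent to $W_U$ being weakly isomorphic to $W_0$; conversely, if $W_U$ is weakly isomorphic to $W_0$ then $d(H_i,W_U)=d(H_i,W_0)=d_i$ for all $i$, so $W$ satisfies $\CC$. I expect the main obstacle to be entirely bookkeeping — carefully aligning the probabilistic description of the evaluation of a decorated constraint with the subgraph density $d(H_i,W_U)$ through the two measure-preserving changes of variables, and justifying the replacement of ``conditioned on lying in $U$'' by ``sampled uniformly from $U$'' — rather than any substantial difficulty, since all the real work has already been done in setting up the decorated-constraint formalism.
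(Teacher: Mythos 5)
Your proposal is correct and follows exactly the route the paper takes: the paper gives no formal proof of this lemma, merely stating in the preceding paragraph that decorating every vertex of a forcing family for $W_0$ with the parts in $\XX$ yields the desired constraints, and your argument simply fills in the change-of-variables bookkeeping that identifies the evaluation of the root-free decorated graph with the density in the induced subgraphon. The only detail worth flagging is the usual labelled-versus-isomorphic normalisation (an automorphism factor) between the evaluation of a decorated graph and $d(H_i,W_U)$, which only affects the constants $d_i$ and not the logic.
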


We conclude this subsection by stating a lemma,
which appeared implicitly in~\cite[proof of Lemma 2.7 or Lemma 3.3]{bib-lovasz11+} and was explicitly stated in~\cite[Lemma 8]{bib-reg}.

\begin{lem}\label{lemma_int}
Let $X,Z\subseteq\mathbb{R}$ be measurable non-null sets and let $F\colon X\times Z\to [0,1]$ be a measurable function.
If there exists a constant $C\in\mathbb{R}$ such that
\[\int_Z F(x,z)F(x',z)\diff{z}=C\]
for almost every $(x,x')\in X^2$, then it holds that
\[\int_Z F(x,z)^2\diff{z}=C\]
for almost every $x\in X$.
\end{lem}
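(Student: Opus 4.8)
The plan is to view each horizontal slice $f_x:=F(x,\cdot)$ as an element of the Hilbert space $L^2(Z)$ and to read the hypothesis as saying that all pairwise inner products $\langle f_x,f_{x'}\rangle_{L^2(Z)}=\int_Z F(x,z)F(x',z)\diff z$ are equal to the constant $C$ for almost every pair $(x,x')\in X^2$; the conclusion to be proved is then $\|f_x\|_{L^2(Z)}^2=C$ for almost every $x$. We may assume that $Z$ has finite measure, as is the case in the applications of the lemma in this paper; since $F$ is $[0,1]$-valued, this guarantees $f_x\in L^2(Z)$ with $\|f_x\|^2\le|Z|$ for every $x$, so all of the integrals below converge and Tonelli's theorem applies freely.

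First I would introduce the averaged slice $\bar f(z):=|X|^{-1}\int_X F(x,z)\diff x$. Interchanging the order of integration and using that $\int_Z F(x,z)F(x',z)\diff z=C$ for almost every $(x,x')$, one gets $\|\bar f\|^2=|X|^{-2}\iint_{X^2}\int_Z F(x,z)F(x',z)\diff z\diff x\diff x'=C$ and, for almost every $x$, $\langle f_x,\bar f\rangle=|X|^{-1}\int_X\int_Z F(x,z)F(x',z)\diff z\diff x'=C$. Writing $u_x:=f_x-\bar f$, these two identities give $\langle u_x,\bar f\rangle=\langle f_x,\bar f\rangle-\|\bar f\|^2=0$ for almost every $x$, and then $\langle u_x,u_{x'}\rangle=\langle f_x,f_{x'}\rangle-\langle f_x,\bar f\rangle-\langle\bar f,f_{x'}\rangle+\|\bar f\|^2=0$ for almost every $(x,x')$. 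By the Pythagorean identity, $\|f_x\|^2=\|\bar f\|^2+\|u_x\|^2=C+\|u_x\|^2$ for almost every $x$, so it suffices to prove that $u_x=0$ in $L^2(Z)$ for almost every $x$.

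This last claim is an instance of the following fact: if $(u_x)_{x\in X}$ is a jointly measurable family in a separable Hilbert space with $\langle u_x,u_{x'}\rangle=0$ for almost every $(x,x')\in X^2$, then $u_x=0$ for almost every $x$. (Note that $L^2(Z)$ is separable.) I would prove this by contradiction. If $\{x:u_x\ne0\}$ had positive measure, then, fixing a countable dense subset $\{g_n\}_{n\in\NN}$ of $L^2(Z)$, each such $x$ would satisfy $\langle u_x,g_n\rangle\ne0$ for some $n$; hence for some $n\in\NN$ and some $\delta>0$ there would be a set $X''\subseteq X$ of positive measure on which $\langle u_x,g_n\rangle\ge\delta$ (after possibly replacing $g_n$ by $-g_n$). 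On the one hand, $\langle\int_{X''}u_x\diff x,g_n\rangle=\int_{X''}\langle u_x,g_n\rangle\diff x\ge\delta|X''|>0$. On the other hand, $\|\int_{X''}u_x\diff x\|^2=\iint_{X''\times X''}\langle u_x,u_{x'}\rangle\diff x\diff x'=0$ since $\langle u_x,u_{x'}\rangle$ vanishes for almost every pair in $X''\times X''\subseteq X^2$; thus $\int_{X''}u_x\diff x=0$ and the previous inner product is $0$, a contradiction. With this fact in hand we conclude $u_x=0$, i.e.\ $F(x,z)=\bar f(z)$ for almost every $z$, for almost every $x$, and therefore $\int_Z F(x,z)^2\diff z=\|f_x\|^2=\|\bar f\|^2=C$ for almost every $x$.

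The computations in the second paragraph are routine Tonelli manipulations, and the only place that requires genuine care is the separability argument in the third paragraph: one cannot recover the ``diagonal'' quantity $\int_X\|u_x\|^2\diff x$ from $\iint_{X^2}\langle u_x,u_{x'}\rangle\diff x\diff x'$ by discarding the off-diagonal part, because the diagonal of $X^2$ is null, so the vanishing of the pairwise inner products must be combined with separability of $L^2(Z)$ (via the countable dense set) rather than a naive subtraction. I expect this to be the main obstacle, the remainder being bookkeeping.
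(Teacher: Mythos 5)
Your argument is correct. Note that the paper itself does not prove this lemma: it is stated with a citation (to Lemma~8 of the Cooper--Kaiser--Kr\'al'--Noel paper, where it in turn is traced back to Lov\'asz and Szegedy), so there is no in-paper proof to compare against; your write-up is a valid self-contained substitute. The decomposition $f_x=\bar f+u_x$ with $\|\bar f\|^2=\langle f_x,\bar f\rangle=C$ and $\langle u_x,u_{x'}\rangle=0$ a.e.\ is the natural Hilbert-space route, and you correctly identify the one non-trivial point: one cannot pass from $\iint_{X^2}\langle u_x,u_{x'}\rangle\diff x\diff x'=0$ to $\int_X\|u_x\|^2\diff x=0$ because the diagonal is null, and your fix via separability of $L^2(Z)$ (choose $g_n$ and a positive-measure set $X''$ with $\langle u_x,g_n\rangle\ge\delta$, then observe $\int_{X''}u_x\diff x=0$) is sound; the measurability of $x\mapsto\langle u_x,g_n\rangle$ and the Fubini interchanges are all justified by joint measurability and boundedness of $F$.

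The only gap relative to the statement as written is your standing assumption that $X$ and $Z$ have finite measure, whereas the lemma allows arbitrary measurable subsets of $\RR$. For $X$ this is harmless: $X$ is $\sigma$-finite, the hypothesis restricts to any positive-finite-measure $X_n\subseteq X$, and the conclusion for a.e.\ $x\in X_n$ along an exhaustion gives it for a.e.\ $x\in X$ (your contradiction set $X''$ should in any case be shrunk to finite measure so that $\int_{X''}u_x\diff x$ is defined). For $Z$ of infinite measure the argument does not restrict so cleanly, since one would first need $f_x\in L^2(Z)$; this case never arises in the paper (the lemma is only applied with $X,Z\subseteq[0,1]$), but if you want the lemma in the stated generality you should either add the hypothesis $|Z|<\infty$ or supply an extra word for that case.
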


\section{Main Proof}

In this section, we prove Theorem~\ref{thm1}.
For technical reasons, it is easier to consider graphons as functions from $[0,1)^2$ rather than $[0,1]^2$, and
we do so throughout the section.
Note that this change affects a graphon on a set of measure zero only.
In general, we refer to intervals of the type $[a,b)$ as \emph{half-open},
however, we do not refer to intervals of the type $(a,b]$ as \emph{half-open}.

For the proof of Theorem~\ref{thm1}, 
fix a graphon $W_F$ and $\eps>0$.
We can assume that $\frac{1}{\eps}-1$ is a power of two, i.e., $\eps=\frac{1}{2^{r}+1}$ for an integer $r$, and that
almost every vertex of $W_F$ has degree less than one.
If either assumption does not hold, choose $\eps'<\eps$ such that $\frac{1}{\eps'}-1$ is a power of two and
apply the theorem with the graphon $W'_F$ such that
$W'_F(x,y)=W_F(\frac{1-\eps'}{1-\eps}\cdot x,\frac{1-\eps'}{1-\eps}\cdot y)$
for $(x,y)\in [0,\frac{1-\eps}{1-\eps'})^2$ and $W'_F(x,y)=0$ elsewhere.

Set $M=4\left(\frac{1}{\eps}-1\right)$ and $m=\log_2 M$.
By applying the Monotone Reordering Theorem and, if needed, changing the graphon $W_F$ on a set of measure zero,
we can assume that there exists a partition of $[0,1)$ into half open intervals $Q_1,\ldots,Q_M$ such that
the degree of every vertex $x$ of $W_F$ contained in $Q_k$, $k\in [M]$, belongs to the interval $[(k-1)/M,k/M)$ and
the subinterval $Q_k$ precedes $Q_{k+1},\ldots,Q_M$ for every $k\in [M]$.
Note that some of the subintervals $Q_k$ can be empty.

\subsection{Overview of \texorpdfstring{$\boldsymbol{W_0}$}{W0}}
\label{subsection_general}

We next provide a description of the general structure of the graphon $W_0$ and
present the detailed definition of individual tiles throughout this section
together with the decorated constraints enforcing its structure.
We also refer the reader to Figure~\ref{fig1}, where the graphon $W_0$ is visualized, and
to Table~\ref{tab-ref}, which provides references to subsections where individual tiles are forced.
The graphon $W_0$ is a partitioned graphon with 
\begin{itemize}
\item $M$ parts $A_1,\ldots,A_M$,
\item $M+9$ parts $B_A, \ldots, B_F, B_{G_1},\ldots, B_{G_M}, B_P, B_Q, B_R$,
\item $m+1$ parts $C_1,\ldots,C_m$ and $C_\infty$,
\item $m+1$ parts $D_1,\ldots,D_m$ and $D_\infty$,
\item $m$ parts $E_1,\ldots,E_{m-1}$ and $E_\infty$,
\item $M$ parts $F_1,\ldots,F_M$, and
\item two parts $G_1$ and $G_2$.
\end{itemize}
In total, $W_0$ has $3M+3m+13$ parts, and
the set of the parts contained in each of the seven groups above is denoted $A_*, B_*, \ldots, G_*$, respectively;
this notation is in line with the notation that we have introduced for visualizing decorated constraints.
The set of all $3M+3m+13$ parts of $W_0$ is denoted by $\PP$, i.e.,
$\PP=A_*\cup B_*\cup\cdots\cup G_*$.
We will also use $B_{G_*}$ to denote the set containing the parts $B_{G_1},\ldots, B_{G_M}$, and
$|X_*|$ to denote the measure of the union of the parts contained in $X_*$ for $X\in\{A,B,\ldots,G\}$.
For some arguments that we present,
it may be convenient to think of parts contained in each of the groups as a single part.
Indeed, the parts contained in the same group serve a similar purpose.

\begin{figure}
\begin{center}
\epsfxsize 14cm
\epsfbox{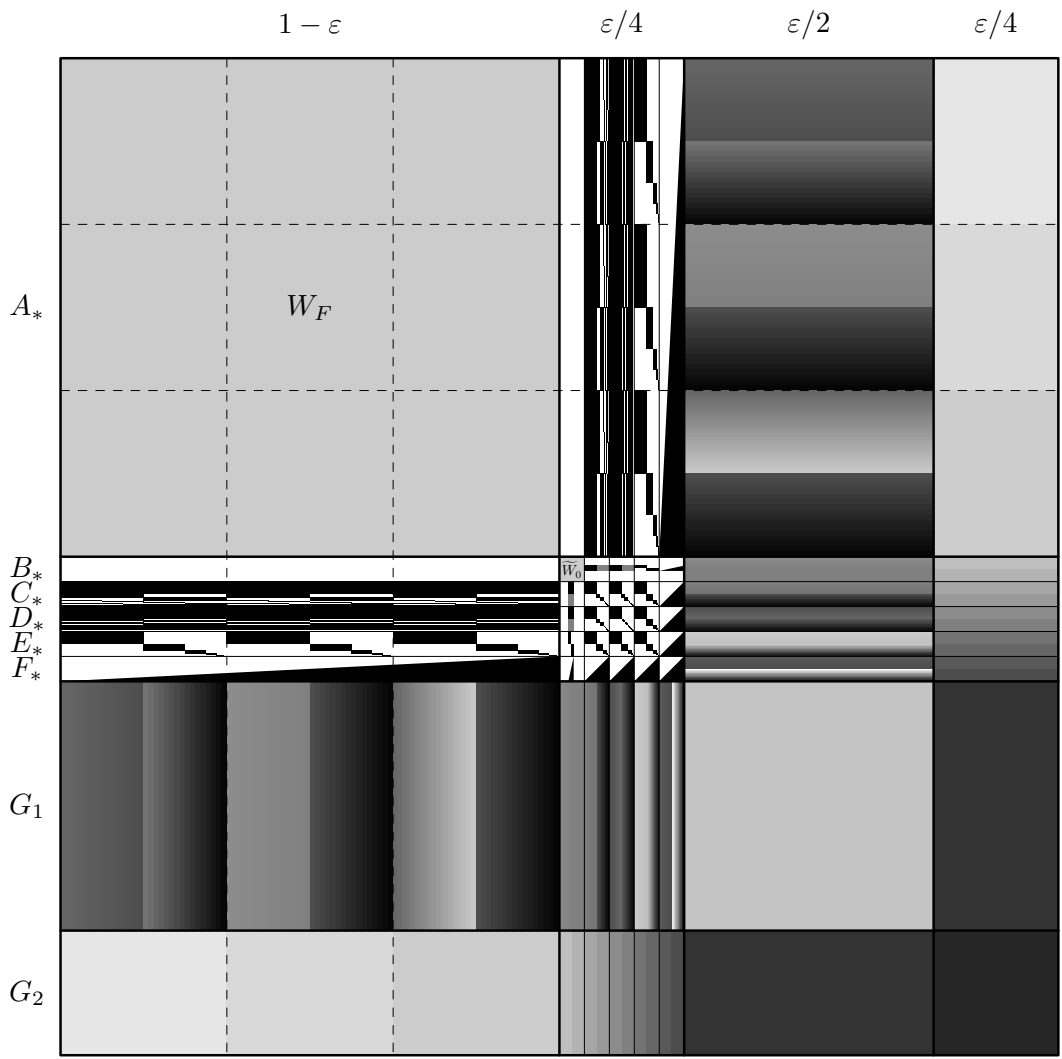}
\end{center}
\caption{A sketch of the graphon $W_0$.
         The composite tile $A_*\times A_*$ contains the graphon $W_F$ and
	 the composite tile $ B_*\times B_*$ contains the graphon ${\widetilde W}_0$ from Theorem~\ref{CKM}.
	 An enhanced sketch of the structure between the parts contained in $A_*$ and $B_{G*}$ and
	 the parts contained in $C_*$, $D_*$, $E_*$ and $F_*$ is given in Figure~\ref{fig2}.
	 \label{fig1}}
\end{figure}

\begin{figure}
\begin{center}
\epsfbox{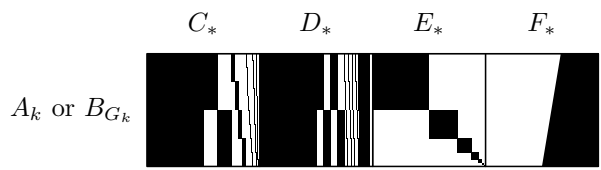}
\end{center}
\caption{A sketch of the structure of the graphon $W_0$
         between the parts contained in $A_*$ and $B_{G*}$ and
         the parts contained in $C_*$, $D_*$, $E_*$ and $F_*$.\label{fig2}}
\end{figure}

\begin{table}
\begin{center}
\begin{tabular}{|c|cccccccc|}
\hline
      & $A_*$ & $B_*$ & $C_*$ & $D_*$ & $E_*$ & $F_*$ & $G_1$ & $G_2$ \\
\hline
$A_*$ & \ref{forcing_densities} & \ref{section_balancing} & \ref{section_squares} & \ref{section_squares} & \ref{section_checker} & \ref{coordinate_system} & \ref{section_balancing} & \ref{section_distinguishing} \\
$B_*$ & \ref{section_balancing} & \ref{ff_section} & \ref{section_squares} & \ref{section_squares} & \ref{section_checker} & \ref{coordinate_system} & \ref{section_balancing} & \ref{section_distinguishing} \\
$C_*$ & \ref{section_squares} & \ref{section_squares} & \ref{section_indices} & \ref{section_indices} & \ref{section_checker} & \ref{coordinate_system} & \ref{section_balancing} & \ref{section_distinguishing} \\
$D_*$ & \ref{section_squares} & \ref{section_squares} & \ref{section_indices} & \ref{section_indices} & \ref{section_checker} & \ref{coordinate_system} & \ref{section_balancing} & \ref{section_distinguishing} \\
$E_*$ & \ref{section_checker} & \ref{section_checker} & \ref{section_checker} & \ref{section_checker} & \ref{section_checker} & \ref{coordinate_system} & \ref{section_balancing} & \ref{section_distinguishing} \\
$F_*$ & \ref{coordinate_system} & \ref{coordinate_system} & \ref{coordinate_system} & \ref{coordinate_system} & \ref{coordinate_system} & \ref{coordinate_system} & \ref{section_balancing} & \ref{section_distinguishing} \\
$G_1$ & \ref{section_balancing} & \ref{section_balancing} & \ref{section_balancing} & \ref{section_balancing} & \ref{section_balancing} & \ref{section_balancing} & \ref{section_balancing} & \ref{section_distinguishing} \\
$G_2$ & \ref{section_distinguishing} & \ref{section_distinguishing} & \ref{section_distinguishing} & \ref{section_distinguishing} & \ref{section_distinguishing} & \ref{section_distinguishing} & \ref{section_distinguishing} & \ref{section_distinguishing} \\
\hline
\end{tabular}
\end{center}
\caption{References to the subsections where the corresponding tiles are analyzed.}
\label{tab-ref}
\end{table}

We now describe the structure of the graphon $W_0$.
Each part $X\in\PP$ of the graphon is a half-open subinterval of $[0,1)$ with measure given in Table~\ref{table2}, and
these subintervals follow the order in which they were listed when we introduced the parts of $W_0$.
For the rest of this section, the subinterval of $[0,1)$ forming a part $X\in\PP$ of $W_0$ is denoted by $X^0$;
we use $X^0$ solely to denote the subinterval while we still use $X$ for the part of $W_0$.
This allows us to clearly distinguish the subintervals of $[0,1)$ forming the parts of $W_0$
from the subsets of $[0,1)$ corresponding to the parts with the same name in other graphons that we will consider.
We will also use $A_*^0,\ldots,G_*^0$ to denote the unions of the subintervals associated
with the parts contained in $A_*,\ldots,G_*$, respectively.

\begin{table}
\centering
\begin{tabular}{|c|c|c|}
\hline
Part & Measure & Pre-Degree\\
\hline
$A_k$ & $(1-\eps) |Q_k|$ & $\frac{\eps(k+1)}{4}$ \\
$B_{G_k}$ & $\frac{\eps}{20}\cdot\frac{1}{14}\cdot |Q_k|$ & $\frac{\eps}{4}$ \\
$B_A,\ldots,B_F,B_P,B_R$ & $\frac{\eps}{20}\cdot\frac{1}{14}$ & $\frac{\eps}{4}$ \\
$B_Q$ & $\frac{\eps}{20}\cdot\frac{5}{14}$ & $\frac{\eps}{4}$ \\
$C_k, D_k$ & $\frac{\eps}{20}\cdot\frac{1}{2^k}$ & $\frac{1-\eps}{2^{k-1}}+\frac{\eps}{4}$ \\
$C_\infty, D_\infty$ & $\frac{\eps}{20}\cdot\frac{1}{2^m}$ & $\frac{1-\eps}{2^m}+\frac{\eps}{4}=\frac{\eps}{2}$ \\
$E_k$ & $\frac{\eps}{20}\cdot \frac{1}{2^k}$ & $\frac{1-\eps}{2^k}+\frac{\eps}{4}$ \\
$E_\infty$ & $\frac{\eps}{20}\cdot\frac{1}{2^{m-1}}$ & $\frac{1-\eps}{2^m}+\frac{\eps}{4}=\frac{\eps}{2}$ \\
$F_k$ & $\frac{\eps}{20}\cdot \frac{1}{M}$ & $\frac{\eps(k+1)}{4}$ \\
$G_1$ & $\frac{\eps}{2}$ & \\
$G_2$ & $\frac{\eps}{4}$ & \\
\hline
\end{tabular}
\caption{The sizes and the pre-degrees of the parts of the graphon $W_0$.}
\label{table2}
\end{table}

The graphon $W_F$ is contained on the composite tile $A_*\times A_*$ and
we set $W_0((1-\eps)x,(1-\eps)y)=W_F(x,y)$ for every $(x,y)\in [0,1)^2$.
In this way,
the part $Q_k$ of the graphon $W_F$ corresponds to the part $A_k$ of the graphon $W_0$ for every $k\in [M]$.
The graphon $W_0$ outside the composite tile $A_*\times A_*$ will be defined in the following subsections, and
we use the convention that when the value $W_0(x,y)$ is defined, the definition also sets the value $W_0(y,x)$.

The parts contained in $B_*\cup\cdots\cup F_*$ of the graphon $W_0$ are used to enforce its structure, and
the parts $G_1$ and $G_2$ are used to balance the degrees inside the parts.
For each part $X\in\PP$ except for $G_1$ and $G_2$,
we define a real number $\pdeg(X)$, which we call the \emph{pre-degree} of $X$.
These numbers are given in Table~\ref{table2}.
The definition of the graphon $W_0$ will ensure that
\begin{equation}
\label{eq_predeg}
\int_{[0,1)\setminus G_2^0}W_0(x,z)\diff{z}=\pdeg(X)
\end{equation}
for every $x\in X^0$; further details are given in Subsection~\ref{section_balancing}.
We next fix an irrational number $\delta_X\in (0,\eps/4)$ for each part $X\in\PP$ such that
the numbers $\delta_X$, $X\in\PP$, are rationally independent;
in particular, all the numbers $\delta_X$, $X\in\PP$, are mutually distinct.
The part $G_2$ will be used to distinguish different parts of the graphons
by guaranteeing that the degree of each part $X\in\PP\setminus\{G_1,G_2\}$ is $\pdeg(X)+\delta_X$.
The graphon $W_0$ is constant on each tile $X\times G_2$, $X\in\PP$, and
the sole purpose of these tiles is to guarantee that different parts have distinct degrees.

We conclude this subsection by defining a notation that will be convenient in our exposition.
Let $\XX$ be a non-empty set of parts of $W_0$ and $\XX^0$ the set of corresponding half-intervals.
If $\bigcup\XX^0$ is a half-open interval, we define a mapping $\gamma_{\XX}:[0,1)\to\bigcup\XX^0$ as
\begin{equation}
\label{eq-inttopart}
\gamma_{\XX}(x)=x\cdot\left|\bigcup\XX^0\right|+\min\bigcup\XX^0\;.
\end{equation}
Informally speaking, $\gamma_{\XX}$ maps the half-interval $[0,1)$ to the half-interval $\bigcup\XX^0$ linearly.
For example, $W_0(\gamma_{A_*}(x),\gamma_{A_*}(y))=W_F(x,y)$ for every $x,y\in[0,1)$.
If $\XX=\{X\}$, we will just write $\gamma_X$ instead of $\gamma_{\{X\}}$.

\subsection{Universal graphon}
\label{CKM_section}

In this subsection, we revisit the construction of the graphon $W_0$ from Theorem~\ref{thm-ckm} given in~\cite{ckm}.
In the proof of Theorem~\ref{thm1},
we apply Theorem~\ref{thm-ckm} with the same graphon $W_F$ for which we are proving Theorem~\ref{thm1}.
To distinguish the graphons $W_0$ from Theorems~\ref{thm-ckm} and~\ref{thm1},
we will be using ${\widetilde W}_0$ for the graphon from Theorem~\ref{thm-ckm}.
The graphon ${\widetilde W}_0$ obtained in this way is visualized in Figure~\ref{fig-ckm}, and
we now review some of the properties of the graphon ${\widetilde W}_0$ and
the proof of its finite forcibility given in~\cite{ckm}.

\begin{figure}
\begin{center}
\epsfbox{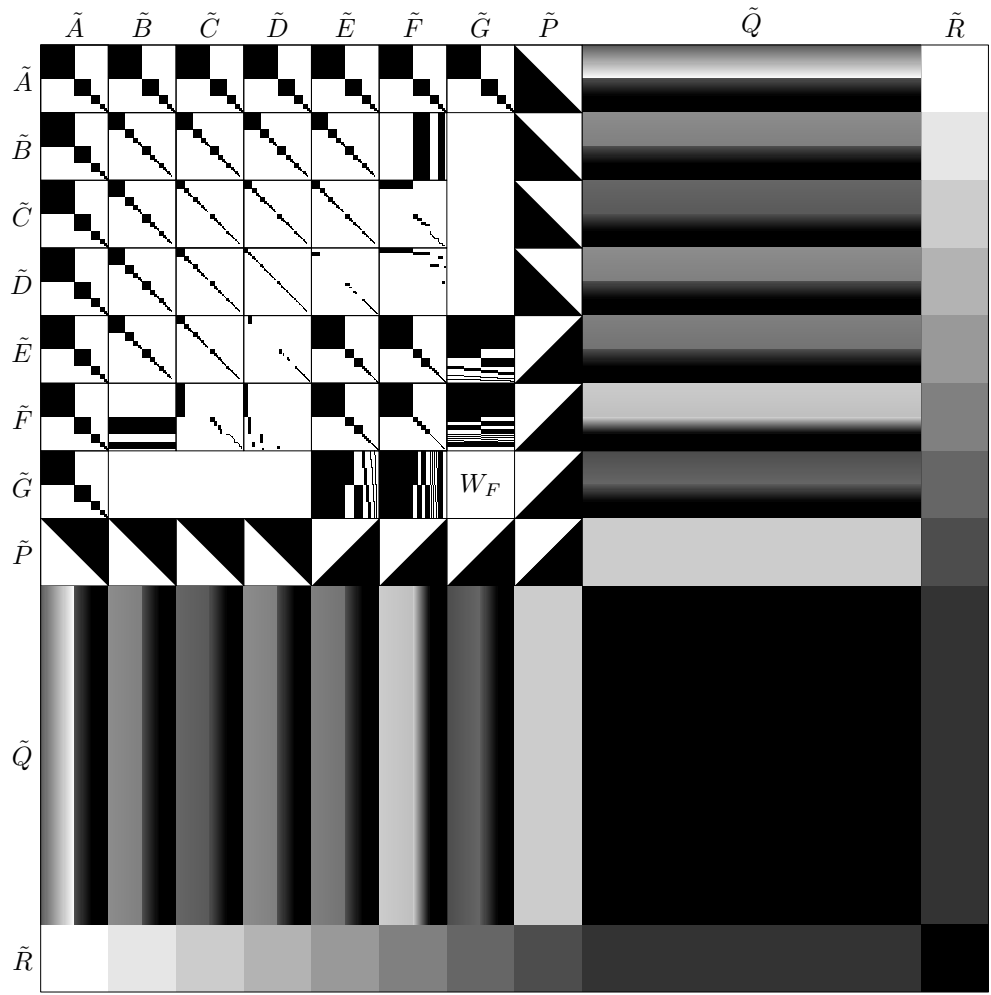}
\end{center}
\caption{The graphon ${\widetilde W}_0$ constructed in~\cite{ckm}.}
\label{fig-ckm}
\end{figure}

\begin{thm}
\label{CKM}
The graphon ${\widetilde W}_0$ is a partitioned graphon
with $10$ parts ${\widetilde A},\ldots,{\widetilde G}$, ${\widetilde P}$, ${\widetilde Q}$ and ${\widetilde R}$ that
has the following properties in particular.
\begin{enumerate}[label=(\alph*)]
\item
  The parts ${\widetilde A},\ldots,{\widetilde G}$, ${\widetilde P}$ and ${\widetilde R}$ are
  half-open intervals $[0/14,1/14)$, $\ldots$, $[6/14,7/14)$, $[7/14,8/14)$ and $[13/14,14/14)$, respectively.
  In particular, each of these parts has measure $1/14$.
\item The part ${\widetilde Q}$ is $[8/14,13/14)$, i.e., its measure is $5/14$.
\item
  It holds that 
  \[{\widetilde W}_0\left(\frac{6+x}{14},\frac{6+y}{14}\right)=W_F(x,y)\]
  for every $(x,y)\in [0,1)^2$, i.e., the subgraphon induced by ${\widetilde G}$ is $W_F$.
\item
  It holds that 
  \[{\widetilde W}_0\left(\frac{6+x}{14},\frac{7+y}{14}\right)=W_{\Delta}(x,y)\]
  for every $(x,y)\in [0,1)^2$, where $W_{\Delta}$ is the half-graphon defined in Section~\ref{sec-prelim},
  i.e., the tile ${\widetilde G}\times {\widetilde P}$ is the half-graphon.
\item\label{CKM_item}
  For every graphon $W$ that is weakly isomorphic to ${\widetilde W}_0$,
  there exists a measure-preserving map ${\wg}:[0,1)\to [0,1)$ such that
  \[W(x,y)={\widetilde W}_0\left({\wg}(x),{\wg}(y)\right)\]
  for almost every $(x,y)\in[0,1)^2$.
\end{enumerate}
\end{thm}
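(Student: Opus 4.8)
The plan is to read all five items off the explicit construction of $\widetilde W_0$ carried out in~\cite{ckm}; this theorem is a repackaging of what is proved there, so the work is mostly bookkeeping. Items~(a) and~(b) merely record which half-open subintervals of $[0,1)$ the ten parts occupy and what their measures are: seven parts $\widetilde A,\ldots,\widetilde G$ together with $\widetilde P$ and $\widetilde R$ of measure $\tfrac1{14}$ each, and $\widetilde Q$ of measure $\tfrac5{14}$, with $9\cdot\tfrac1{14}+\tfrac5{14}=1$; this is immediate from the definition of the partition in~\cite{ckm}, after a harmless affine reindexing of $[0,1)$ to match normalizations if necessary. Items~(c) and~(d) record the definition of two of the tiles: in~\cite{ckm} the simple tile $\widetilde G\times\widetilde G$ is set equal to a reparametrized copy of $W_F$ and the simple tile $\widetilde G\times\widetilde P$ to a reparametrized copy of the half-graphon $W_\Delta$, so the two displayed identities follow by unwinding the maps $x\mapsto(6+x)/14$ and $y\mapsto(6+y)/14$ (respectively $(6+x)/14$ and $(7+y)/14$).

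The only item with genuine content is~\ref{CKM_item}. The general structure theorem for weakly isomorphic graphons from~\cite{bib-borgs10+}, quoted in Section~\ref{sec-prelim}, gives, for any $W$ weakly isomorphic to $\widetilde W_0$, measure-preserving maps $\varphi_1,\varphi_2\colon[0,1)\to[0,1)$ with $W^{\varphi_1}=\widetilde W_0^{\varphi_2}$ almost everywhere; the point of~\ref{CKM_item} is to replace these by a single map acting on the $\widetilde W_0$ side. This upgrade is valid as soon as $\widetilde W_0$ is \emph{twin-free}, i.e., the map $x\mapsto\widetilde W_0(x,\cdot)\in L^1([0,1))$ is injective up to a null set: then $W$ admits a measure-preserving map onto its twin-free (``pure'') quotient, which, being twin-free and weakly isomorphic to $\widetilde W_0$, agrees with $\widetilde W_0$ up to a measure-preserving bijection, and composing yields a single measure-preserving $\wg$ with $W=\widetilde W_0^{\wg}$ almost everywhere. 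So the step I would actually carry out is to verify that $\widetilde W_0$ is twin-free, using the coordinate system built into the construction in~\cite{ckm}: almost every vertex of $\widetilde W_0$ is determined by its neighbourhood. In particular, although the subgraphon $W_F$ sitting on $\widetilde G$ may itself have twins, item~(d) makes the tile $\widetilde G\times\widetilde P$ the half-graphon, which assigns each vertex $x\in\widetilde G$ a distinct coordinate readable off from $N_{\widetilde W_0}(x)\cap\widetilde P$; hence no two vertices of $\widetilde G$ are twins in $\widetilde W_0$, and analogous (easier) checks for the remaining nine parts use the portions of~\cite{ckm} that force rigidity of those parts.

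I expect the main difficulty to be organizational rather than mathematical: one must ensure that items~(a)--(e) are stated in exactly the form invoked by the later subsections of this section, and that the twin-freeness needed for~\ref{CKM_item} is genuinely supplied by~\cite{ckm} rather than merely suggested by the figure. Should~\cite{ckm} not record this property explicitly, the fallback is to deduce it from finite forcibility together with the coordinate structure, reproving a small fragment of~\cite{ckm}; I would expect this to be short, since the half-graphon tile $\widetilde G\times\widetilde P$ and the analogous devices in the other parts do almost all of the separating work.
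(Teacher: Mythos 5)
The paper offers no proof of Theorem~\ref{CKM}: it is stated as a summary of properties of the construction in~\cite{ckm}, so for items (a)--(d) your reading of them as bookkeeping from that construction is exactly what the paper intends. For item~\ref{CKM_item}, however, your route genuinely differs from the one the citation rests on. In~\cite{ckm} the map $\wg$ is not extracted from the abstract structure theory of weak isomorphism; it is a byproduct of the finite-forcibility argument itself: any graphon with the same subgraph densities as $\widetilde W_0$ (in particular any weakly isomorphic one) is shown, part by part via the Monotone Reordering Theorem and the half-graphon coordinate system, to equal $\widetilde W_0(\wg(\cdot),\wg(\cdot))$ for an explicitly constructed measure-preserving $\wg$ --- the same scheme this paper replays in Section~3 for $W_0$. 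Your alternative --- pass to the twin-free (pure) quotient of $W$ and invoke the fact from~\cite{bib-borgs10+} that twin-free weakly isomorphic graphons differ by a measure-preserving bijection --- is a legitimate sufficient condition and buys a cleaner, more conceptual argument, but it shifts the entire burden onto verifying that $\widetilde W_0$ is twin-free. That property is not recorded in~\cite{ckm} and is not free: you must check that each of the ten parts carries a separating structure (the half-graphon on $\widetilde G\times\widetilde P$ handles $\widetilde G$, as you note, but the parts $\widetilde Q$ and $\widetilde R$ in particular require their own inspection of the tiles of~\cite{ckm}). Note also that twin-freeness is sufficient but not necessary for (e), so your fallback of reproving a fragment of~\cite{ckm} --- which is essentially the argument actually underlying the theorem --- may be the more robust option; as written, your item (e) rests on a plausible but deferred claim rather than an established one.
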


We use the graphon ${\widetilde W}_0$ to define $W_0$ on the composite tile $B_*\times B_*$
by setting
\[W_0(\gamma_{B_*}(x),\gamma_{B_*}(y))={\widetilde W}_0(x,y)\]
for every $[0,1)^2$.
In this way,
the parts $\widetilde A,\ldots,\widetilde F$, $\widetilde P$, $\widetilde Q$ and $\widetilde R$ of the graphon ${\widetilde W}_0$
correspond to the parts $B_A,\ldots,B_F$, $B_P$, $B_Q$ and $B_R$ of the graphon $W_0$, respectively, and
the part $\widetilde G$ to the union $B_{G_1}\cup\cdots\cup B_{G_M}$.

\subsection{General structure of \texorpdfstring{$\boldsymbol{W_0}$}{W0}}
\label{ff_section}

In this subsection, we provide an overview of the constraints that witness the finite forcibility of $W_0$, and
use some of them to establish the general structure of any graphon satisfying them.
The constraints that we use are the following:
\begin{itemize}
\item the constraints given in Lemma~\ref{lemma_partitioned} such that any graphon satisfying them
      is partitioned graphon with parts $\PP$ that have the same degrees and measures as those of $W_0$,
\item the decorated constraints from Lemma~\ref{lemma_part} applied to the graphon ${\widetilde W}_0$ and with $\XX=B_*$, and
\item the decorated constraints that we present in the current and subsequent subsections of this section.
\end{itemize}
Suppose that $W$ is a graphon satisfying all these constraints.
We will construct a particular measure-preserving map $g:[0,1)\to [0,1)$ and
prove that $W(x,y)=W_0(g(x),g(y))$ for almost every $(x,y)\in [0,1)^2$.

We now present the construction of the map $g$.
By Lemma~\ref{lemma_partitioned}, the graphon $W$ is a partitioned graphon with parts $\PP$ that
have the same measures and degrees as those in $W_0$.
In the rest of the section, the subset of $[0,1)$ forming a part $X\in\PP$ of $W$ is denoted by $X$;
recall that the half-interval forming the corresponding part in $W_0$ is denoted by $X^0$.
The Monotone Reordering Theorem implies that for each part $X\in\PP$
there exists a measure-preserving map $\varphi_X:X\to [0,|X^0|)$ and
a non-decreasing function $f_X:[0,|X^0|)\to\RR$ such that
\[f_X(\varphi_X(x))=\deg_{W}^{F_*}(x)\]
for almost every $x\in X$.
Theorem~\ref{CKM} implies that there exists a measure-preserving map ${\wg}:B_*\to [0,|B_*|)$ such that
\[W(x,y)=\widetilde W_0\left(\frac{\wg(x)}{|B_*|},\frac{\wg(y)}{|B_*|}\right)
\]
for almost every $(x,y)\in B_*\times B_*$.
We next define the mapping $g:[0,1)\to [0,1)$ as follows.
\[g(x)=\left\{
       \begin{array}{cl}
       \gamma_{B_*}({\wg}(x)/|B_*|) & \mbox{if $x$ belongs to a part contained in $B_*$, and} \\
       \gamma_X(\varphi_X(x)/|X|) & \mbox{if $x$ belongs to a part $X\not\in B_*$}.
       \end{array}
       \right.\]
Recall that our goal is to show that $W(x,y)=W_0(g(x),g(y))$ for almost every $(x,y)\in [0,1)^2$.
Note that the definition of $g$ directly implies that $W(x,y)=W_0(g(x),g(y))$ for almost every $(x,y)\in B_*\times B_*$.

\begin{figure}
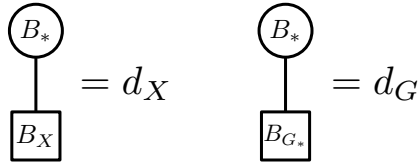

\begin{center}
\epsfbox{epsuniversal-35.mps} \hskip 10mm
\epsfbox{epsuniversal-36.mps} \hskip 10mm
\end{center}
\caption{Set-decorated constraints aligning the parts contained in $B_*$.
         The value of $X$ ranges among $A,\ldots,F,P,Q,R$, and
	 the value of $d_Z$ is equal to the degree of the part $\widetilde Z$ in the graphon ${\widetilde W}_0$
	 where $Z\in\{A,\ldots,G,P,Q,R\}$.
         \label{CKM_constraints1}}
\end{figure}

The definition of the function $g$ implies that
each part $X\not\in B_*$ of $W$ is mapped to the part $X^0$ of $W_0$ by $g$,
however, this property is not implied by the definition of $g$ for the parts $X\in B_*$.
The decorated constraints presented in Figure~\ref{CKM_constraints1} guarantee this as we will now argue.
Recall that for almost every $x,y \in B_* \times B_*$,
we have $W(x,y)=\widetilde W_0\left(\frac{\wg(x)}{|B_*|},\frac{\wg(y)}{|B_*|}\right)$.
The first constraint in the figure implies that for every $X\in\{A,\ldots,F,P,Q,R\}$,
each vertex of the part $B_X$ of the graphon $W$
belongs to the part ${\widetilde X}$ of the graphon ${\widetilde W}_0$,
which is embedded in the composite tile $B_*\times B_*$.
Likewise, the second constraint implies that each vertex of one of the parts of $B_{G*}$
belongs to the part ${\widetilde G}$ of the graphon ${\widetilde W}_0$.
It follows that for each $X\in\{A,\ldots,F,P,Q,R\}$,
the part $B_X$ of $W$ is mapped by $g$ to the part $B_X^0$ of $W_0$, and that $B_{G_*}$ is mapped to $B_{G_*}^0$. Note that we have not yet proven that each $B_{G_i}$ is mapped to $B_{G_i}^0$; we will do so in the next section.

\subsection{Coordinate system}
\label{coordinate_system}

In this subsection,
we introduce some structure of the graphon $W_0$ that allows us to define a coordinate system inside most of its parts.
The arguments follow lines similar to those in~\cite{bib-reg,ckm,bib-comp,bib-inf}.
By Lemma~\ref{lemma_part},
there exist decorated constraints such that the composite tile $F_*\times F_*$
is weakly isomorphic to the half-graphon.
These constraints guarantee that the subgraphon induced by $F_*$ is the half-graphon,
however, they do not fix the order of the parts $F_1,\ldots,F_M$ inside it.
So, in addition to the constraints given by Lemma~\ref{lemma_part},
we also include the constraints depicted in Figure~\ref{fig_coordinate1}.

\begin{figure}
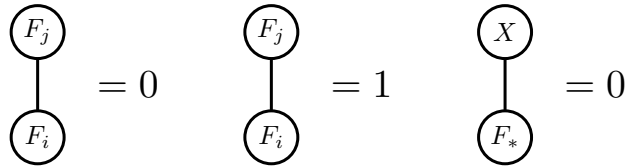

\begin{center}
\epsfbox{epsuniversal-53.mps} \hskip 10mm
\epsfbox{epsuniversal-54.mps} \hskip 10mm
\epsfbox{epsuniversal-56.mps} 
\end{center}
\caption{Decorated constraints forcing the structure of some of the tiles involving parts from $F_*$.
         The first constraint should hold for all $i,j\in [M]$ such that $i+j\le M$,
	 the second for all $i,j\in [M]$ such that $i+j\ge M+2$, and
	 the last for all $X\in\{B_A,\ldots,B_F,B_P,B_Q,B_R\}$.
         \label{fig_coordinate1}}
\end{figure}

\begin{figure}
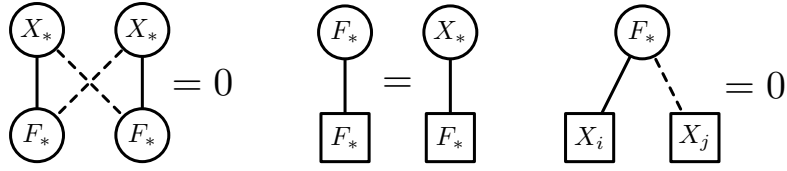

	\begin{center}
		\epsfbox{epsuniversal-46.mps} \hskip 10mm
		\epsfbox{epsuniversal-45.mps} \hskip 10mm
		\epsfbox{epsuniversal-55.mps} 
	\end{center}
\caption{Decorated constraints forcing the structure of the rest of the tiles involving parts from $F_*$.
The constraints should hold for $X\in\{A, B_G, C, D, E\}$. The last constraint should hold
for all $i<j$ with $i,j\in [M]$ if $X\in \{A,B_G\}$, with $i,j\in [m]\cup\{\infty\}$ if $X \in \{C,D\}$ and
with $i,j\in [m-1]\cup\{\infty\}$ if $X=E$ (using the convention that $i<\infty$ for every $i\in\NN$).
\label{fig_coordinate2}}
\end{figure}

The first constraint in Figure~\ref{fig_coordinate1} implies that each of the tiles $F_i\times F_j$
with $i,j\in [M]$ and $i+j\le M$ is equal to zero almost everywhere and
the second constraint implies that each of the tiles $F_i\times F_j$
with $i,j\in [M]$ and $i+j\ge M+2$ is equal to one almost everywhere.
Since the graphon on the composite tile $F_*\times F_*$ is weakly isomorphic to the half-graphon by Lemma~\ref{lemma_part},
the choice of $\varphi_{X}$ for $X\in F_*$ yields that
$W(x,y)=W_0(g(x),g(y))$ for almost every $(x,y)\in F_*\times F_*$.

The composite tiles $X\times F_*$ of $W_0$ are equal to zero for $X\in\{B_A,\ldots,B_F,B_P,B_Q,B_R\}$ and
this is enforced by the last constraint Figure~\ref{fig_coordinate1}.
Therefore, it holds that $W(x,y)=0=W_0(g(x),g(y))$ for almost every $(x,y)\in X\times F_*$
for $X\in\{B_A,\ldots,B_F,B_P,B_Q,B_R\}$.

Next fix $X\in\{A, B_G, C, D, E\}$.
For $(x,y)\in X^0_*\times F^0_*$,
we define $W_0(x,y)=1$ for $\gamma_{X_*}^{-1}(x)/|X_*|+\gamma_{F_*}^{-1}(y)/|F_*|\ge 1$, and
$W_0(x,y)=0$ otherwise.
The first constraint in Figure~\ref{fig_coordinate2} implies that
for almost every pair $y,y'\in F_*$,
either $N_{X_*}(y)\sqsubseteq N_{X_*}(y')$ or $N_{X_*}(y')\sqsubseteq N_{X_*}(y)$.
The second constraint implies that $\deg^{F_*}(y)=\deg^{X_*}(y)$ for almost every $y\in F_*$.
It follows that the composite tile $X_*\times F_*$ is a scaled half-graphon.
The last constraint guarantees that
the degrees relative to $F_*$ of almost all the vertices contained in $X_i$ are smaller than those in $X_j$ for $i<j$,
i.e., the parts of $X_*$ are ordered in the same way in $W$ as in $W_0$ according to the degrees relative to $F_*$.
In particular, this implies that $g$ maps the part $B_{G_i}$ to the part $B^0_{G_i}$ for every $i\in [M]$.
The choice of $\varphi_{X}$ for $X\in A_*\cup C_*\cup D_*\cup E_*$ according to the Monotone Reordering Theorem
yields that $W(x,y)=W_0(g(x),g(y))$ for almost every $(x,y)\in X\times F_*$.
Finally, since $g$ maps the part $B_{G_i}$ to the part $B^0_{G_i}$ for every $i\in [M]$,
we also obtain that $W(x,y)=W_0(g(x),g(y)))$ for almost every $(x,y)\in B_{G_*}\times F_*$.

\subsection{Checker tiles}
\label{section_checker}

The \emph{checker graphon} $W_C$ is the graphon defined as follows;
the graphon is also depicted in Figure~\ref{checker}.
Let $I_k$ denote the half-open interval $\left[1-2^{-k+1},1-2^{-k}\right)$ for $k\in\NN$.
Set $W_C(x,y)=0$ if $x$ and $y$ belongs to the same interval $I_k$ for some $k\in\NN$, and
$W_C(x,y)=0$ otherwise.

\begin{figure}
\begin{center}
\epsfbox{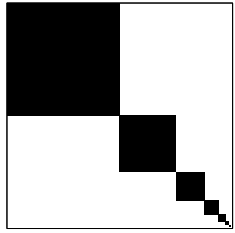}
\end{center}
\caption{The checker graphon $W_C$.\label{checker}}
\end{figure}

We now define the graphon $W_0$ on the tiles involving the parts from $E_*$.
Given a part $X$ in $\{A_1,\ldots,A_M,B_{G_1},\ldots,B_{G_M},C_*,D_*,E_*\}$,
set $W_0(\gamma_{E_*}(x),\gamma_{X}(y))=W_C(x,y)$ for all $(x,y)\in [0,1)^2$.
We also set $W(x,y)=0$ for all $(x,y)\in E_*\times X$ where $X\in B_*\setminus B_{G_*}$.

\begin{figure}[ht]
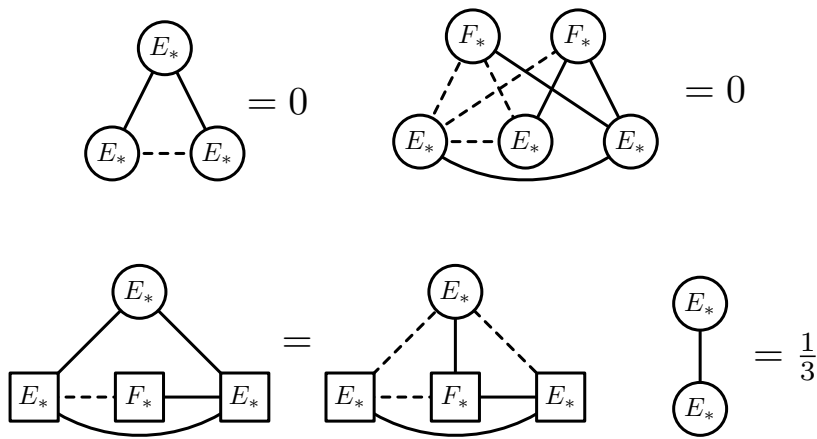

\begin{center}
\epsfbox{epsuniversal-6.mps} \hskip 10mm
\epsfbox{epsuniversal-7.mps} \vskip 10mm
\epsfbox{epsuniversal-8.mps} \hskip 10mm
\epsfbox{epsuniversal-9.mps}
\end{center}
\caption{The decorated constraints forcing the structure of the composite tile $ E_*\times  E_*$.\label{fig_checker1}}
\end{figure}

We next consider the constraints depicted in Figures~\ref{fig_checker1} and~\ref{fig_checker2}.
Since the arguments follow the lines of those presented in~\cite{ckm},
we present them here on a general level and refer the reader for further details to~\cite[Section 3.2]{ckm}.
The first constraint on the first line in Figure~\ref{fig_checker1} implies that
there exists a collection $\JE$ of disjoint measurable subsets of $E_*$ such that
the following holds for almost every $(x,y)\in E_*\times E_*$:
$W(x,y)=1$ if and only if $x$ and $y$ belong to the same set of $\JE$.
The second constraint on the first line implies that
the sets in $\JE$ are intervals with respect to the relative degrees to $F_*$ of vertices in $E_*$,
i.e., there exists a collection $\JJ$ of disjoint subintervals of $[0,1)$ such that
the following holds for almost every $(x,y)\in E_*\times E_*$:
$W(x,y)=1$ if and only if $\gamma_{E_*}^{-1}(g(x))$ and $\gamma_{E_*}^{-1}(g(y))$ belong to the same $J\in\JJ$.
The first constraint on the second line yields that the length $|J|$ of each interval $J\in\JJ$,
which is the value of the left side of the expression, is equal to $1-\sup J$,
which is the value of the right side of the expression (when the two $E_*$-roots are mapped by $\gamma_{E_*}^{-1}(g(\cdot))$ to $J$).
Finally, the remaining constraint is equivalent to saying that
\[\sum_{J\in\JJ}|J|^2=\frac{1}{3}.\]
Together with the fact that $|J|=1-\sup J$,
this implies that, up to changing each contained in $\JJ$ on a set of measure zero,
$\JJ$ contains exactly the sets $I_k$, $k\in\NN$, which were defined at the beginning of this subsection.
It follows that $W(x,y)=W_0(g(x),g(y))$ for almost every $(x,y)\in E_*\times E_*$.

\begin{figure}[ht]
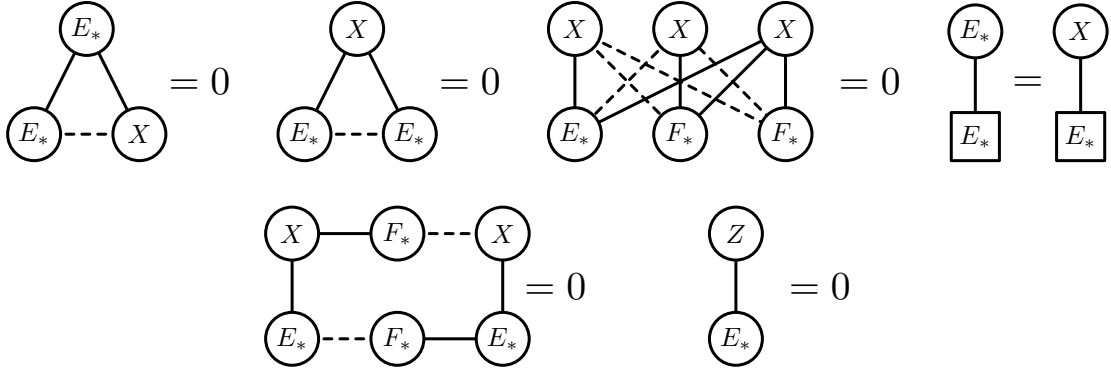

\begin{center}
\epsfbox{epsuniversal-10.mps} \hskip 5mm
\epsfbox{epsuniversal-58.mps} \hskip 5mm
\epsfbox{epsuniversal-12.mps} \hskip 5mm
\epsfbox{epsuniversal-11.mps} \vskip 5mm
\epsfbox{epsuniversal-13.mps} \hskip 15mm
\epsfbox{epsuniversal-57.mps}
\end{center}
\caption{The decorated constraints forcing the structure of
         the composite tiles $E_*\times X$ for $X\in\{A_1,\ldots,A_M,B_{G_1},\ldots,B_{G_M},C_*,D_*,E_*\}$, and
	 $E_*\times Z$ for $Z\in B_*\setminus B_{G*}$.\label{fig_checker2}}
\end{figure}

Finally, we consider the constraints depicted in Figure~\ref{fig_checker2}.
Fix $X$ to be one of $A_1,\ldots,A_M$, $B_{G_1},\ldots,B_{G_M}$, $C_*$, $D_*$ and $E_*$.
The first two constraints on the first line imply that
there exist disjoint measurable subsets $K_J\subseteq X$ such that
the following holds for almost every $(x,y)\in E_*\times X$:
$W(x,y)=1$ if and only if there exists $J\in\JJ$ such that $\gamma_{E*}^{-1}(g(x))\in J$ and $y\in K_J$.
The third constraint yields that each of these sets is an interval with respect to the relative degrees to $F_*$,
i.e., there exist disjoint subintervals $L_J\subseteq [0,1)$ such that
the following holds for almost every $(x,y)\in E_*\times X$:
$W(x,y)=1$ if and only if there exists $J\in\JJ$ such that $\gamma_{E*}^{-1}(g(x))\in J$ and $\gamma_{X}^{-1}(g(y))\in L_J$.
Furthermore, the last constraint on the first line implies that $|J|=|L_J|$ for every $J\in\JJ$.

The first constraint on the second line implies that
if $J\in\JJ$ precedes $J'\in\JJ$, then $L_J$ precedes $L_{J'}$.
Hence, we conclude that $J$ and $L_J$ differ on a set of measure zero for every $J\in\JJ$.
It follows that $W(x,y)=W_0(g(x),g(y))$ for almost every $(x,y)\in E_*\times X$.
The last constraint in Figure~\ref{fig_checker2} implies that
$W(x,y)=0=W_0(g(x),g(y))$ for almost every $(x,y)\in E_*\times Z$, $Z\in B_*\setminus B_{G_*}$.

\subsection{Exponential checker tiles}
\label{section_indices}

\begin{figure}
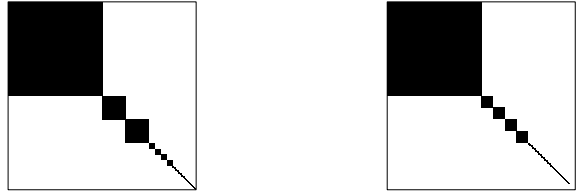

\begin{center}
\epsfbox{epsuniversal-14.mps}\hskip 25mm
\epsfbox{epsuniversal-15.mps}
\end{center}
\caption{The graphons $W_C^2$ and $W_C^3$.}
\label{fig_CD1}
\end{figure}

We next define a refined version $W_C^r$, $r\in\NN$, of the checker graphon.
Informally speaking, we form $W_C^r$ by splitting the parts of the checker graphon into $1,2^{r-1},2^{2(r-1)},2^{3(r-1)}$, etc. parts.
So, fix $r\in\NN$ and define the graphon $W_C^r$ as follows:
$W_C^r(x,y)=1$ if and only if $x$ and $y$ belongs to the same interval $I_k$, $k\in\NN$, and
\[\left\lfloor\frac{x-\min I_k}{|I_k|}\cdot 2^{(k-1)(r-1)}\right\rfloor=\left\lfloor\frac{y-\min I_k}{|I_k|}\cdot 2^{(k-1)(r-1)}\right\rfloor,\]
and $W_C^r(x,y)=0$ otherwise.
The graphons $W_C^2$ and $W_C^3$ are depicted in Figure~\ref{fig_CD1}.
Also note that the graphon $W_C^1$ is the checker graphon itself.
Finally, we define $W_0(\gamma_{C_*}(x),\gamma_{C_*}(y))=W_0(\gamma_{C_*}(x),\gamma_{D_*}(y))=W_C^2(x,y)$ and
$W_0(\gamma_{D_*}(x),\gamma_{D_*}(y))=W_C^3(x,y)$ for $(x,y)\in [0,1)^2$.

\begin{figure}
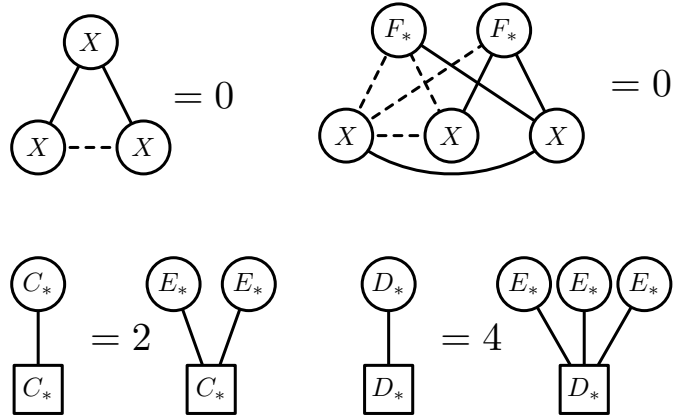

\begin{center}
\epsfbox{epsuniversal-16.mps} \hskip 10mm
\epsfbox{epsuniversal-17.mps} \vskip 10mm
\epsfbox{epsuniversal-19.mps} \hskip 10mm
\epsfbox{epsuniversal-21.mps}
\end{center}
\caption{The decorated constraints forcing the tiles $C_*\times  C_*$ and $ D_*\times D_*$.
         The constraints on the first line should hold for $X=C_*$ and $X=D_*$.}
\label{fig_CD2}
\end{figure}

Consider now the decorated constraints given in Figure~\ref{fig_CD2}.
The arguments are similar to those presented in Subsection~\ref{section_checker},
so we present them briefly.
Fix $X$ to be $C_*$ or $D_*$.
The constraints on the first line in Figure~\ref{fig_CD2} imply that
there exists a family $\JJ_X$ of disjoint subintervals $[0,1)$ such that
the following holds for almost every $(x,y)\in X\times X$:
$W(x,y)=1$ if $\gamma_X^{-1}(g(x))$ and $\gamma_X^{-1}(g(y))$ belong to the same interval $J\in\JJ_X$, and
$W(x,y)=0$ otherwise.
Without loss of generality, we can assume that all intervals in $\JJ_X$ are half-open.
The first constraint on the second line implies that
the following holds for almost every $x\in C_*$:
if $\gamma_{C_*}^{-1}(g(x))\in I_k$, $k\in\NN$, and $\gamma_{C_*}^{-1}(g(x))\in J$, $J\in\JJ_{C_*}$,
then $|J|=2|I_k|^2=2^{-2k+1}$.
It follows that $W(x,y)=W_0(g(x),g(y))$ for almost every $(x,y)\in C_*\times C_*$.
Similarly, the second constraints implies that
the following holds for almost every $x\in D_*$:
if $\gamma_{D_*}^{-1}(g(x))\in I_k$, $k\in\NN$, and $\gamma_{D_*}^{-1}(g(x))\in J$, $J\in\JJ_{D_*}$,
then $|J|=4|I_k|^3=2^{-3k+2}$.
This yields that $W(x,y)=W_0(g(x),g(y))$ for almost every $(x,y)\in D_*\times D_*$.

\begin{figure}[ht]
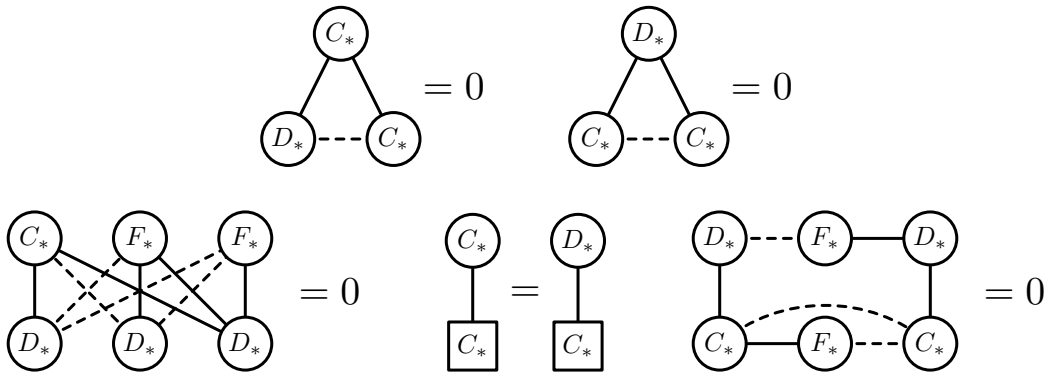

\begin{center}
\epsfbox{epsuniversal-23.mps} \hskip 10mm
\epsfbox{epsuniversal-24.mps} \vskip 5mm	
\epsfbox{epsuniversal-25.mps} \hskip 10mm
\epsfbox{epsuniversal-26.mps} \hskip 10mm
\epsfbox{epsuniversal-27.mps}
\end{center}
\caption{The decorated constraints forcing the structure of the composite tile $ C_*\times  D_*$.}
\label{fig_CD3}
\end{figure}

We next consider the constraints depicted in Figure~\ref{fig_CD3}.
The two constraints on the first line imply that there exist disjoint measurable sets $K_J\subseteq D_*$, $J\in\JJ_{C*}$, such that
the following holds for almost every $(x,y)\in C_*\times D_*$:
$W(x,y)=1$ if and only if there exists $J\in\JJ_{C_*}$ such that
$\gamma_{C_*}^{-1}(g(x))\in J$ and $y\in K_J$.
The first constraint on the second line implies that each $K_J$ is an interval with respect to the degrees relative to $F_*$,
i.e., there exist disjoint subintervals $L_J$ of $[0,1)$, $J\in\JJ_{C_*}$, such that
the following holds for almost every $(x,y)\in C_*\times D_*$:
$W(x,y)=1$ if there exists $J\in\JJ_{C_*}$ such that
$\gamma_{C_*}^{-1}(g(x))\in J$ and $\gamma_{D_*}^{-1}(g(y))\in L_J$, and $W(x,y)=0$ otherwise.
The second constraint on the second line yields that $|J|=|L_J|$.
Finally, the last constraint on the second line implies that if $J$ precedes $J'$, $J,J'\in\JJ_{C_*}$, then $L_J$ precedes $L_{J'}$.
We conclude that $W(x,y)=W_0(g(x),g(y))$ for almost every $(x,y)\in C_*\times D_*$.

\subsection{Referencing dyadic squares}
\label{section_squares}

In this subsection, we introduce an auxiliary structure that
allows us to copy a tile $B_{G_i}\times B_{G_j}$ to a tile $A_i\times A_j$, $i,j\in [M]$.
Define $I_{s,t}$ for $s\in\NN$ and $t\in [2^{s-1}]$ to be the half-open interval $\left[\frac{t-1}{2^{s-1}},\frac{t}{2^{s-1}}\right)$.
A half-open interval $I_{s,t}$ for $s\in\NN$ and $t\in [2^{s-1}]$ is a \emph{dyadic interval of order $s$}, and
$I_{s,t}\times I_{s,t'}$ for $s\in\NN$ and $t,t'\in [2^{s-1}]$ is a \emph{dyadic square of order $s$}.
Note that dyadic squares of order $s$ partition $[0,1)^2$ for every $s\in\NN$; see Figure~\ref{fig_dsq}.
A tile $B_{G_i}\times B_{G_j}$ is copied to a tile $A_i\times A_j$, $i,j\in [M]$,
by ensuring that the density of every dyadic square of the tile $A_i\times A_j$
is the same as the density of the corresponding dyadic square of the tile $B_{G_i}\times B_{G_j}$.
The auxiliary structure that is used to achieve this is embedded
in the composite tiles $X\times C_*$ and $X\times D_*$, $X\in A_*\cup B_{G_*}$.

\begin{figure}[ht]
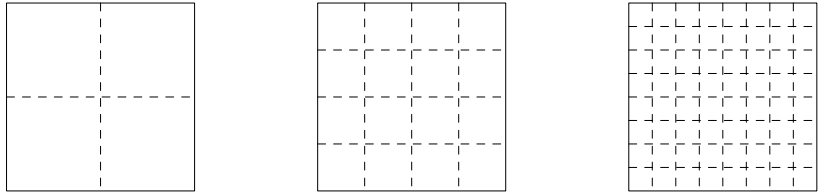

\begin{center}
\epsfbox{epsuniversal-63.mps} \hskip 15mm
\epsfbox{epsuniversal-64.mps} \hskip 15mm
\epsfbox{epsuniversal-65.mps}
\end{center}
\caption{Partitions of the square $[0,1)^2$ into dyadic squares of orders $2$, $3$ and $4$.}
\label{fig_dsq}
\end{figure}

Fix $X\in A_*\cup B_{G_*}$.
Informally speaking,
the structure between $X$ and $\gamma_{C_*}(I_s)$ and
the structure between $X$ and $\gamma_{D_*}(I_s)$
partitions $X$ into dyadic intervals of order $s$;
here, the interval $I_s$ is as defined in Subsection~\ref{section_checker}.
The structure of the tiles is illustrated in Figure~\ref{referencing_fig1}.
For every $s\in\NN$,
each dyadic interval of order $s$ appears once between $X$ and $\gamma_{C_*}(I_s)$ in the tile $X\times C_*$,
while it appears $2^{s-1}$ times between $X$ and $\gamma_{D_*}(I_s)$ in the tile $X\times D_*$.
In particular, $\gamma_{C_*}(I_s)$ is split into $2^{s-1}$ parts in the same way as in the tile $C_*\times C_*$ and
$\gamma_{D_*}(I_s)$ is split into $2^{2(s-1)}$ parts in the same way as in the tile $D_*\times D_*$.
Formally, we define the tile $X\times C_*$ as follows.
For $(x,y)\in X\times C_*$,
we define $W_0(x,y)=1$ if there exist $s\in\NN$ and $t\in[2^{s-1}]$ such that
$\gamma^{-1}_{C_*}(y)\in I_s$,
\[\frac{\gamma^{-1}_{C_*}(y)-\min I_s}{|I_s|}\in I_{s,t}\quad\mbox{and}\quad\gamma^{-1}_X(x)\in I_{s,t}\;,\]
and $W_0(x,y)=0$ otherwise.
Similarly, for $(x,y)\in X\times D_*$,
we define $W_0(x,y)=1$ if there exist $s\in\NN$ and $t\in[2^{s-1}]$ such that
$\gamma^{-1}_{D_*}(y)\in I_s$,
\[\left(\frac{\gamma^{-1}_{D_*}(y)-\min I_s}{|I_s|}\cdot 2^{s-1}\mod 1\right)\in I_{s,t}\quad\mbox{and}\quad\gamma^{-1}_X(x)\in I_{s,t}\;,\]
and $W_0(x,y)=0$ otherwise (in the displayed expression, $x\mod 1$ stands for $x-\lfloor x\rfloor$).
Finally, we set $W_0(x,y)=0$ for all $(x,y)\in X\times (C_*\cup D_*)$ where $X\in B_*\setminus B_{G_*}$.

\begin{figure}[ht]
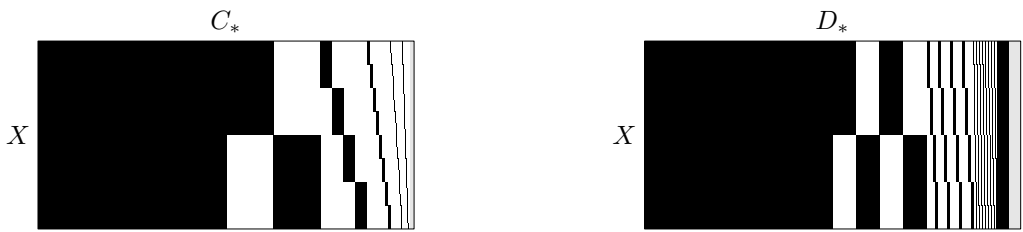

\begin{center}
\epsfbox{epsuniversal-33.mps} \hskip 25mm
\epsfbox{epsuniversal-34.mps}
\end{center}
\caption{The composite tiles $X\times C_*$ (in the left) and $X\times D_*$ (in the right) for $X\in A_*\cup B_{G_*}$.}
\label{referencing_fig1}
\end{figure}

\begin{figure}[ht]
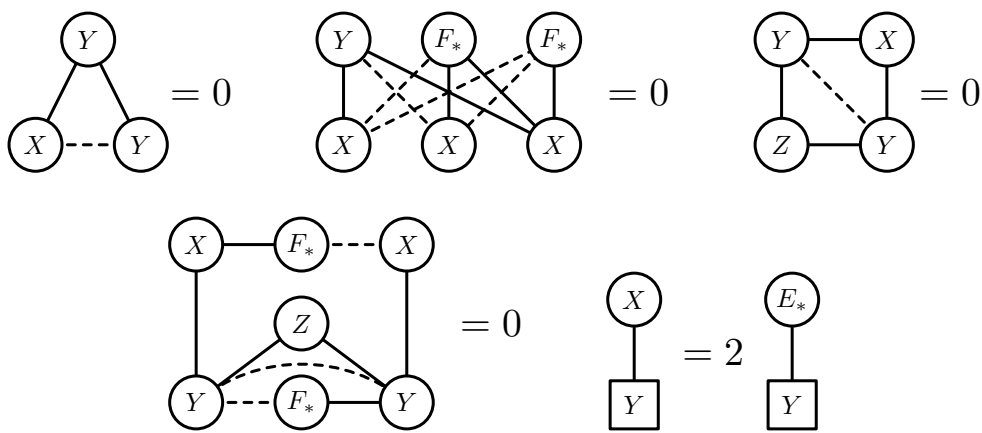

\begin{center}
\epsfbox{epsuniversal-30.mps} \hskip 10mm
\epsfbox{epsuniversal-29.mps} \hskip 10mm
\epsfbox{epsuniversal-31.mps} \vskip 5mm
\epsfbox{epsuniversal-32.mps} \hskip 10mm
\epsfbox{epsuniversal-28.mps} 
\end{center}
\caption{The decorated constraints forcing the structure of the tiles $X\times C_*$ and $X\times D_*$ for $X\in A_*\cup B_{G_*}$.
         The constraints should hold for every such $X$ and for both $(Y,Z)=(C_*,E_*)$ and $(Y,Z)=(D_*,C_*)$.}
\label{referencing_fig2}
\end{figure}

We now establish that the constraints depicted in Figure~\ref{referencing_fig2}
force the structure of the tiles defined in the previous paragraph.
Fix $X\in A_*\cup B_{G_*}$ and fix $(Y,Z)$ to be either $(C_*,E_*)$ or $(D_*,C_*)$.
Recall the definition of $\JJ_{C_*}$ and $\JJ_{D_*}$ from Subsection~\ref{section_indices} and,
for completeness, define $\JJ_{E_*}$ to be $\{I_s,s\in\NN\}$.
Note that each of $\JJ_{C_*}$, $\JJ_{D_*}$ and $\JJ_{E_*}$ is a partition of $[0,1)$ into half-open intervals,
$\JJ_{C_*}$ refines $\JJ_{E_*}$, and $\JJ_{D_*}$ refines $\JJ_{C_*}$.
In particular, the partition given by $\JJ_{Y}$ refines the partition given by $\JJ_{Z}$.
The first two constraints on the first line imply that
there exists a collection of subintervals $K_J$ of $[0,1)$, $J\in\JJ_Y$, such that 
the following holds for almost every $(x,y)\in X\times Y$:
$W(x,y)=1$ if there exists $J\in\JJ_Y$ such that $\gamma_Y^{-1}(g(y))\in J$ and $\gamma_X^{-1}(g(x))\in K_J$ (note that the interval $J$ is uniquely determined by $y$), and
$W(x,y)=0$ otherwise.
Also note that these two constraints do not imply that the intervals $K_J$ are disjoint, and
indeed, we will see further that they are not.
Suppose that $J,J'\in\JJ_Y$, $J\not=J'$, are subintervals of the same interval contained in $\JJ_Z$.
The third constraint on the first line implies that $K_J$ and $K_{J'}$ are disjoint (possibly after removing a set of measure zero from each of them), and
the first constraint on the second line implies that if $J$ precedes $J'$, then $K_J$ precedes $K_{J'}$.
The final constraint yields that
$|K_J|=2^{-s+1}$ for every interval $J\in\JJ_Y$ such that $J\sqsubseteq I_s$, $s\in\NN$.
However, this can only be possible if for every interval $J_0\in\JJ_Z$,
the intervals $K_J$ for $J\sqsubseteq J_0$, $J\in\JJ_Y$, partition the interval $[0,1)$ up to a set of measure zero.
We conclude that $W(x,y)=W_0(g(x),g(y))$ for almost every $(x,y)\in X\times Y$.

\begin{figure}[ht]
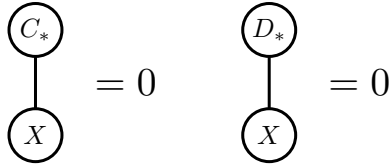

\begin{center}
\epsfbox{epsuniversal-59.mps} \hskip 10mm
\epsfbox{epsuniversal-60.mps} 
\end{center}
\caption{The decorated constraints forcing the structure of the tiles $X\times C_*$ and $X\times D_*$ for $X\in B_*\setminus B_{G_*}$.}
\label{referencing_fig3}
\end{figure}

Finally, the two constraints depicted in Figure~\ref{referencing_fig3} imply that
$W(x,y)=0=W_0(g(x),g(y))$ for almost every $(x,y)\in X\times (C_*\cup D_*)$ where $X\in B_*\setminus B_{G*}$.

\subsection{Forcing the graphon \texorpdfstring{$\boldsymbol{W_F}$}{WF}}
\label{forcing_densities}

In this subsection, we force that the subgraphon of $W$ induced by $A_*$ is weakly isomorphic to $W_F$.
In particular, we define $W_0(\gamma_{A*}(x),\gamma_{A*}(y))=W_F(x,y)$ for every $(x,y)\in [0,1)^2$, and
argue that the constraints presented in this subsection imply that
$W(x,y)=W_0(g(x),g(y))$ for almost every $(x,y)\in A_*\times A_*$.

The argument is split into two steps.
First, we establish that the density of every dyadic square of each tile $A_i\times A_j$ is the same
as the density of the corresponding dyadic square of the tile $B_{G_i}\times B_{G_j}$, $i,j\in [M]$.
This is achieved using the auxiliary structure of the tiles involving $C_*$, $D_*$ and $E_*$.
Recall the partitions $\JJ_{C_*}$, $\JJ_{D_*}$ and $\JJ_{E_*}$ of $[0,1)$ used earlier.
Informally speaking,
we use the partition given by $\JJ_{E_*}$ to determine the order of the dyadic square that we are concerned with,
the partition given by $\JJ_{C_*}$ to determine the row index of the dyadic square and
the partition given by $\JJ_{D_*}$ to determine the column index of the dyadic square.
However, even if the densities of all dyadic squares of tiles $A_i\times A_j$ and $B_{G_i}\times B_{G_j}$ agree,
it does not follow that subgraphons induced by $A_*$ and $B_{G_*}$ are weakly isomorphic.
The partition into dyadic squares could average (hide) a finer internal structure of the tile $A_i\times A_j$.
By Lemma~\ref{lm-oper}, if this were the case,
then the parameter $t(C_4,\cdot)$, which corresponds to the non-induced density of $C_4$,
would be higher for the subgraphon induced by $A_*$ than for the subgraphon induced by $B_{G_*}$,
Hence, fixing the parameter $t(C_4,\cdot)$ together with fixing the densities of all dyadic squares
forces the subgraphons induced by $A_*$ and $B_{G_*}$ to be weakly isomorphic.
We present these arguments formally in the rest of the subsection.

\begin{figure}
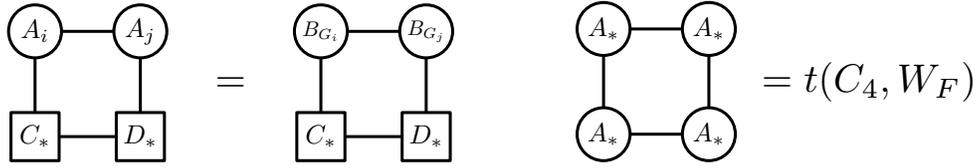

\begin{center}
\epsfbox{epsuniversal-38.mps} \hskip 15mm
\epsfbox{epsuniversal-39.mps} 
\end{center}
\caption{The decorated constraints forcing the structure of the tile $ A_*\times A_*$.
         The first constraint should hold for all $i\in [M]$ and $j\in [M]$.}
\label{densities_fig1}
\end{figure}

Fix $i,j\in [M]$ and consider the first constraint in Figure~\ref{densities_fig1}.
The constraint implies that the following holds for almost every $x\in C_*$ and $y\in D_*$ with $W(x,y)>0$.
Let $s\in\NN$ be the unique integer such that $\gamma_{D_*}^{-1}(g(y))\in I_s$, and
let $t,t'\in [2^{s-1}]$ be the unique integers such that
\[\frac{\gamma_{D_*}^{-1}(g(y))-\min I_s}{|I_s|}\in I_{s,t}\qquad\mbox{and}\qquad
  \left(\frac{\gamma_{D_*}^{-1}(g(y))-\min I_s}{|I_s|}\cdot 2^{s-1}\mod 1\right)\in I_{s,t'}\;.\]
Because of the structure of the tile $C_*\times D_*$ and $W(x,y)>0$,
it follows (with the exceptions forming a set of measure zero) that
\[\gamma_{C_*}^{-1}(g(x))\in I_s\qquad\mbox{and}\qquad \frac{\gamma_{C_*}^{-1}(g(x))-\min I_s}{|I_s|}\in I_{s,t}\;,\]
i.e., the part of $\JJ_{C_*}$ that $\gamma_{C_*}^{-1}(g(x))$ belongs to is determined by $y$.
The indices $s$, $t$ and $t'$,
which are determined by the choice of the $C_*$-root and the $D_*$-root,
describe a dyadic square of $A_i\times A_j$.
For the particular choice of the roots $x\in C_*$ and $y\in D_*$,
the left side of the constraint is equal to
\[ \frac{d_W(g^{-1}(\gamma_{A_i}(I_{s,t})),g^{-1}(\gamma_{A_j}(I_{s,t'})))}{|A_i||A_j|}\;,\]
which corresponds to the density of the dyadic square $I_{s,t}\times I_{s,t'}$ inside the tile $A_i\times A_j$ ($d_W(\cdot,\cdot)$ was defined in Section~\ref{sec-prelim}).
Similarly, the right side is equal to
\[ \frac{d_W(g^{-1}(\gamma_{B_{G_i}}(I_{s,t})),g^{-1}(\gamma_{B_{G_j}}(I_{s,t'})))}{|B_{G_i}||B_{G_j}|}\;,\]
which corresponds to the density of the dyadic square $I_{s,t}\times I_{s,t'}$ inside the tile $B_{G_i}\times B_{G_j}$.
Considering all possible choices of the roots $x\in C_*$ and $y\in D_*$,
we conclude that the following holds for all $s\in\NN$ and $t,t'\in [2^{s-1}]$:
\[ \frac{d_W(g^{-1}(\gamma_{A_i}(I_{s,t})),g^{-1}(\gamma_{A_j}(I_{s,t'})))}{|A_*||A_*|}=
   \int_{\gamma^{-1}_{A_*}(\gamma_{A_i}(I_{s,t}))\times \gamma^{-1}_{A_*}(\gamma_{A_j}(I_{s,t'}))} W_F(x,y)\diff{x}\diff{y},\]
i.e., the density of any dyadic square inside the tile $A_i\times A_j$ is equal to the desired density.
Since $\gamma_{A_*}$ is a linear function and
any half-open subinterval of $[0,1)$ can be expressed as a countable union of half-open intervals of the form $\gamma^{-1}_{A_*}(\gamma_{A_i}(I_{s,t}))$, $i\in [M]$, $s\in\NN$ and $t\in [2^{s-1}]$,
we obtain that the following holds 
\begin{equation}
 \frac{d_W(g^{-1}(\gamma_{A_*}(J)),g^{-1}(\gamma_{A_*}(J')))}{|A_*||A_*|}=
 \int_{J\times J'} W_F(x,y)\diff{x}\diff{y}\label{eq-maineq1}
\end{equation}
for any two two measurable subsets $J$ and $J'$ of $[0,1)$.

Next, fix any measurable bijection $\psi:[0,1)\to A_*$ such that $\psi^{-1}(X)=|X|/|A_*|$ for every measurable subset of $A_*$.
Define a graphon $W_A$ as $W_A(x,y)=W(\psi(x),\psi(y))$,
i.e., $W_A$ is the subgraphon of $W$ induced by $A_*$.
Further, let ${\wg}:[0,1)\to [0,1)$ be the map defined as ${\wg}(x)=\gamma_{A_*}^{-1}(g(\psi(x)))$.
Note that ${\wg}$ is a measure-preserving map from $[0,1)$ to $[0,1)$.
Using \eqref{eq-maineq1}, we obtain that
\begin{equation}
d_{W_A}\left({\wg}^{-1}(J),{\wg}^{-1}(J')\right)=d_{W_F}\left(J,J'\right)\label{eq-maineq}
\end{equation}
for any two measurable subsets $J$ and $J'$ of $[0,1)$.
In addition, the second constraint in Figure~\ref{densities_fig1} yields that
\begin{equation}
t(C_4,W_A)=t(C_4,W_F).\label{eq-maineqt}
\end{equation}
Lemma~\ref{lm-oper} now implies that $W_A(x,y)=W_F({\wg}(x),{\wg}(y))$ for almost every $(x,y)\in [0,1)^2$.
Since it holds that $W_0(\gamma_{A_*}(x),\gamma_{A_*}(y))=W_F(x,y)$ for every $(x,y)\in [0,1)^2$,
we conclude that
\[W(x,y)=W_F(\gamma_{A_*}^{-1}(g(x)),\gamma_{A_*}^{-1}(g(y)))=W_0(g(x),g(y))\]
for almost every $(x,y)\in A_*\times A_*$.

\subsection{Degree balancing}
\label{section_balancing}

We now define the graphon $W_0$ on the remaining tiles except for those involving the part $G_2$.
First, set $W_0(x,y)=0$ for all $(x,y)\in A_*\times B_*$.
The graphon $W_0$ is now defined on all tiles except for those involving the part $G_1$ or $G_2$.
Recall the definition of pre-degrees given in Table~\ref{table2}.
For $x\in [0,1)\setminus (G_1\cup G_2)$, we define
\[h(x)=\int_{[0,1)\setminus (G_1\cup G_2)}W_0(x,y)\diff y,\]
and set 
\[W_0(x,y)=\frac{2}{\eps}\left(\pdeg(X)-h(x)\right)\]
for every $(x,y)\in X^0\times G_1^0$, $X\in\PP\setminus\{G_1,G_2\}$;
we will show that $W_0(x,y)\in [0,1]$ in what follows.
Finally, let $\rho\in [0,1]$ be such that
\[\frac{\rho\eps}{2}+\sum_{X\in\PP\setminus\{G_1,G_2\}}\frac{2}{\eps}\int_X\left(\pdeg(X)-h(x)\right)\diff x\]
is a rational number, and
set $W_0(x,y)=\rho$ for every $(x,y)\in G_1^0\times G_1^0$.
Note that the sum in the displayed expression corresponds to the density between $G_1$ and
all other parts except for $G_1$ and $G_2$.

We now establish that all the values of $W_0$ defined in this subsection belong to $[0,1]$
by showing that
$\pdeg(X)-h(x)\in [0,\eps/2]$ for every $x\in X^0$, $X\in\PP\setminus\{G_1,G_2\}$.
Since the total measure of the parts $B_*$, $C_*$, $D_*$, $E_*$ and $F_*$ is $\eps/4$,
it is enough to show that
\begin{equation}
\int_{A^0}W_0(x,y)\diff y\in\left[\pdeg(X)-\eps/2,\pdeg(X)-\eps/4\right] \label{eq-int}
\end{equation}
for every $x\in X^0$, $X\in\PP\setminus\{G_1,G_2\}$.

If $X=A^0_k$, $k\in [M]$, the value of the integral in \eqref{eq-int}
belongs to $\left[(1-\eps)\frac{k-1}{M},(1-\eps)\frac{k}{M}\right]$ for every $x\in X^0$
by the definition of $Q_k$, which was given at the beginning of this section.
Since $M=4\cdot\frac{1-\eps}{\eps}$, it follows that
the value of the integral belongs to the interval $\left[\frac{(k-1)\eps}{4},\frac{k\eps}{4}\right]$,
which coincides with the interval on the right side of \eqref{eq-int}.
Similarly, if $X=F^0_k$,
the value of the integral in \eqref{eq-int}
belongs to $\left[(1-\eps)\frac{k-1}{M},(1-\eps)\frac{k}{M}\right]$ for every $x\in X^0$
by the definition of $W_0$, and
this interval again coincides with that on the right side of \eqref{eq-int}.

If $X\in B_*$, then the integral in \eqref{eq-int} is zero and \eqref{eq-int} is also satisfied.
If $X=C^0_k$ or $X=D^0_k$, $k\in [m]$,
then the integral in \eqref{eq-int} is equal to $\frac{1-\eps}{2^{k-1}}$ for every $x\in X^0$ and
\eqref{eq-int} is satisfied.
If $X=E^0_k$, $k\in [m-1]$,
then the integral in \eqref{eq-int} is equal to $\frac{1-\eps}{2^{k}}$ for every $x\in X^0$ and
\eqref{eq-int} is again satisfied.
Finally, if $X\in\{C^0_\infty,D^0_\infty,E^0_\infty\}$,
then the integral in \eqref{eq-int} is at most $\frac{1-\eps}{2^{m}}$, and
so its value belongs to the interval on the right side of \eqref{eq-int}.

\begin{figure}[htb]
\begin{center}
\epsfbox{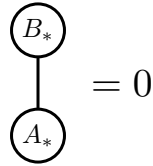}
\end{center}
\caption{The decorated constraint forcing the structure of the tile $A_*\times B_*$.}
\label{cleaning_fig1}
\end{figure}

\begin{figure}[htb]
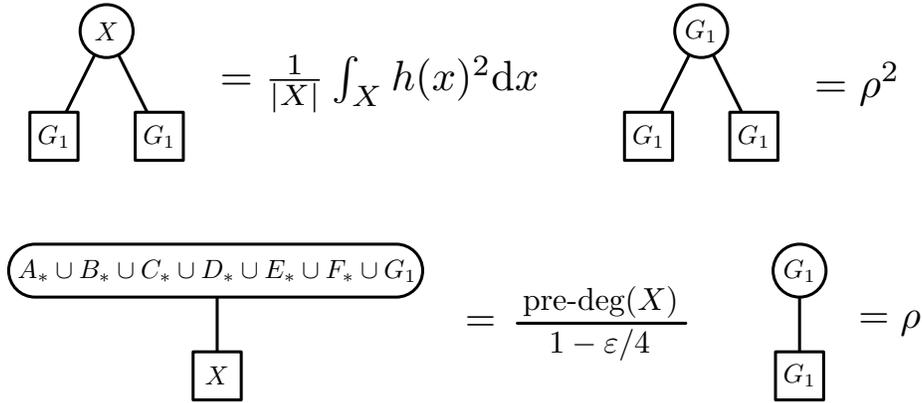

\begin{center}
\epsfbox{epsuniversal-42.mps} \hskip 10mm
\epsfbox{epsuniversal-62.mps} \vskip 10mm
\epsfbox{epsuniversal-41.mps} \hskip 10mm
\epsfbox{epsuniversal-61.mps}
\end{center}
\caption{The decorated constraints forcing the structure of the tiles involving the part $G_1$.
         The first constraint should hold for every $X\in\PP\setminus\{G_1,G_2\}$ and
	 the last constraint for every $X\in\PP\setminus\{G_2\}$.}
\label{balancing_fig1}
\end{figure}

We now force the structure of the tiles that we have just defined.
First, the constraint in Figure~\ref{cleaning_fig1} implies that $W(x,y)=W_0(g(x),g(y))=0$ for almost every $(x,y)\in A\times B$.
We now analyze the constraints depicted in Figure~\ref{balancing_fig1}.
The two constraints on the first line imply that, for every $X\in\PP\setminus\{G_2\}$, the integral
\begin{equation}
\int_{X}W(x,y)W(x,y')\diff x
\label{eq-prod}
\end{equation}
is the same for almost every $y,y'\in G_1$ (and is equal to the value of the corresponding integral in $W_0$).
By Lemma~\ref{lemma_int}, the integral
\[\int_{X}W(x,y)^2\diff x\]
is the same for almost every $y\in G_1$ and its value is equal to \eqref{eq-prod}.
Therefore, for almost every $(y,y') \in G_1^2$, we have that
\[\int_X \left(W(x,y)-W(x,y')\right)^2\diff{x}=0
.\]
In particular, there exists a $y' \in G_1$ such that for almost every $y \in G_1$, 
\[\int_X \left(W(x,y)-W(x,y')\right)^2\diff{x}=0
.\]
This is equivalent to saying that $W(x,y)=W(x,y')$ for almost all $(x,y)\in X\times G_1$.
Thus,
there exists a function ${\widetilde h}:[0,1)\setminus G_2\to\RR$ such that
$W(x,y)={\widetilde h}(x)$ for almost every $(x,y)\in  ([0,1)\setminus G_2)\times G_1$.
The two constraints on the second line in the figure imply that 
\[\int_{G_1}W(x,y)\diff y=\int_{G_1^0}W_0(x,g(y))\diff y\]
for almost every $x\in [0,1)\setminus G_2$.
We conclude that $W(x,y)=W_0(g(x),g(y))$ for almost every $(x,y)\in ([0,1)\setminus G_2)\times G_1$.

\subsection{Degree distinguishing}
\label{section_distinguishing}

We now finish the definition and forcing of the graphon $W_0$.
Recall that we fixed irrational numbers $\delta_X\in (0,\eps/4)$ for each part $X\in\PP$ such that
the numbers $\delta_X$, $X\in\PP$, are rationally independent.
For each $X\in\PP$, we set $W_0(x,y)=\delta_X/|G_2|=4\delta_X/\varepsilon$ for all $(x,y)\in X^0\times G^0_2$.
Observe that the degree of each part $X\in\PP\setminus\{G_1,G_2\}$ is equal to $\pdeg(X)+\delta_X$,
the degree of $G_1$ is $r+\delta_{G_1}$ where $r$ is a rational number (this follows from the choice of $\rho$), and
the degree of $G_2$ is a rational combination of the values $\delta_X$, $X\in\PP$ (recall that $\eps$ is rational). 
Since the values $\delta_X$, $X\in\PP$, are rationally independent,
the degrees of all the parts are distinct.

\begin{figure}[htbp]
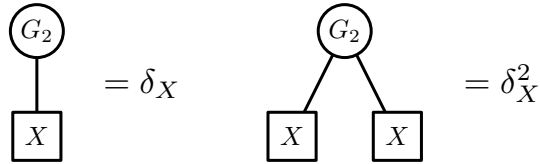

\begin{center}
\epsfbox{epsuniversal-43.mps} \hskip 10mm
\epsfbox{epsuniversal-44.mps} 
\end{center}
\caption{The decorated constraints forcing the structure of the tiles involving the part $G_2$.
         The constraints should hold for every $X\in\PP$.}
\label{distinguish_fig1}
\end{figure}

Fix $X\in\PP$ and consider the constraints depicted in Figure~\ref{distinguish_fig1}.
The first constraint yields that
\[\frac{1}{|G_2|}\int_{G_2}W(x,y)\diff y=\delta_X\]
for almost every $x\in X$, and
the second constraint yields that
\[\frac{1}{|G_2|}\int_{G_2}W(x,y)W(x',y)\diff y=\delta_X^2\]
for almost every $x,x'\in X$.
The latter implies by Lemma~\ref{lemma_int} that
\[\frac{1}{|G_2|}\int_{G_2}W(x,y)^2\diff y=\delta_X^2\]
for almost every $x\in X$.
Hence, for almost every $x\in X$, 
\[\frac{1}{|G_2|}\int_{G_2}\left(W(x,y)-\delta_X\right)^2\diff y=\frac{1}{|G_2|}\int_{G_2}W(x,y)^2\diff y-\frac{2\delta_X}{|G_2|}\int_{G_2}W(x,y)\diff y + \delta_X^2=0
,\]
which implies that $W(x,y)=\delta_X=W_0(g(x),g(y))$ for almost every $(x,y)\in X\times G_2$.
This concludes the argument that $W(x,y)=W_0(g(x),g(y))$ for almost every $(x,y)\in [0,1)^2$ and
the proof of Theorem~\ref{thm1} is now finished.

\section{Sizes of Forcing Families}
\label{sect-unifamily}

In this section, we show that there is no finite family $\GG$ of graphs such that
Theorem~\ref{thm1} would hold for all graphons $W_F$ and $\eps>0$ with $\GG$ being the forcing family.
In particular, the main result of this section is the following theorem.

\begin{thm}
\label{noUniversal}
For every positive integer $n$,
there exists a graphon $W_F$ and a real number $\eps>0$ such that
if $W$ is a finitely forcible graphon containing $W_F$ as a subgraphon on at least $1-\varepsilon$ fraction of its vertices,
then every forcing family for $W$ contains a graph of order greater than $n$. 
\end{thm}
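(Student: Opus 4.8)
The plan is to exploit a counting/dimension argument: a fixed finite forcing family $\GG$ can only ``see'' boundedly much information, so it cannot simultaneously pin down graphons $W$ that encode arbitrarily complicated $W_F$ once $\eps$ is small. More concretely, I would run a compactness-plus-perturbation argument. Suppose for contradiction that there is an integer $n$ such that for \emph{every} graphon $W_F$ and every $\eps>0$, there is a finitely forcible $W$ containing $W_F$ on a $1-\eps$ fraction of its vertices, with a forcing family consisting only of graphs on at most $n$ vertices. The number of (isomorphism types of) graphs on at most $n$ vertices is some finite number $N=N(n)$, so every such forcing family is a subset of a fixed finite set of graphs; in particular the \emph{vector} $\bigl(d(H,W)\bigr)_{|H|\le n}$ lies in a fixed finite-dimensional space $\RR^{N}$, and finite forcibility by graphs of order $\le n$ means precisely that this vector determines $W$ up to weak isomorphism within the set of all graphons.

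First I would isolate the following consequence of being forced by graphs of order $\le n$: if $W$ is forced by a subfamily of the order-$\le n$ graphs, then $W$ is the \emph{unique} minimiser (over all graphons) of some linear combination $\sum \alpha_i d(H_i,W')$ with $|H_i|\le n$ (Proposition~\ref{prop-ffext}), hence $W$ is an extreme point of the (compact, in the cut-norm topology) set of graphons attaining a prescribed value of the order-$\le n$ density vector. The key structural point I want is a \emph{continuity/rigidity} statement: the map sending the order-$\le n$ density vector of such an extremal $W$ to (a weak-isomorphism invariant of) the induced subgraphon $W_F$ on the $1-\eps$ fraction must be, in an appropriate sense, Lipschitz or at least continuous. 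Then, since the domain $\RR^{N}$ is a fixed finite-dimensional space but $\eps$ can be taken as small as we like, the induced subgraphon $W_F$ on a $1-\eps$ fraction is essentially all of $W$ up to a vanishing correction, so in the limit $\eps\to 0$ we would be claiming that a fixed finite-dimensional family of density statistics (those of order $\le n$) determines an \emph{arbitrary} graphon up to weak isomorphism — which is false, because there are graphons with the same densities of all graphs of order $\le n$ that are not weakly isomorphic (e.g.\ take any two non-weakly-isomorphic graphons agreeing on order-$\le n$ densities; such pairs exist since the space of graphons modulo weak isomorphism is infinite-dimensional, e.g.\ one can perturb a generic graphon in a direction that kills all densities of graphs on $\le n$ vertices). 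This contradiction proves the theorem.

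The steps, in order, would be: (1) fix $n$, let $N=N(n)$ and let $\mathcal{H}_n$ be the finite set of graphs of order $\le n$; (2) assuming the theorem fails for this $n$, for each graphon $W_F$ and each $\eps=1/k$ obtain a finitely forcible $W^{(F,k)}$ with forcing family $\GG^{(F,k)}\subseteq\mathcal{H}_n$ and an embedding of $W_F$ onto a $1-1/k$ fraction; (3) record that $W^{(F,k)}$ is the unique minimiser of a linear functional on $\mathcal{H}_n$-densities, hence determined by its $\mathcal{H}_n$-density vector $v^{(F,k)}\in\RR^N$; (4) show that as $k\to\infty$, after passing to a convergent subsequence (using that $[0,1]^N$-valued vectors live in a compact set and that the graphon space is compact in the cut metric by the Lovász--Szegedy compactness theorem), the graphons $W^{(F,k)}$ converge in cut distance to a graphon $W^{(F,\infty)}$ that is weakly isomorphic to $W_F$ (because the $1-1/k$ fraction carrying $W_F$ fills up the whole space and the complementary $1/k$ fraction contributes at most $O(1/k)$ to the cut distance — here one uses that the induced subgraphon relation from Section~\ref{sec-prelim} together with $|X|\to 1$ forces cut-convergence to $W_F$); (5) note that the limiting $\mathcal{H}_n$-density vector of $W^{(F,k)}$ equals that of $W_F$; (6) conclude that the $\mathcal{H}_n$-density vector of $W_F$, which lives in the fixed finite-dimensional $\RR^N$, would have to determine $W_F$ up to weak isomorphism for \emph{every} $W_F$ — because two graphons with different weak-isomorphism classes but the same $\mathcal{H}_n$-vector would give, via the construction, two finitely forcible graphons with the same $\mathcal{H}_n$-forcing data, violating uniqueness; (7) exhibit a concrete pair of graphons with equal $\mathcal{H}_n$-densities but different weak isomorphism type (for instance, by a dimension count: the set of graphons modulo weak isomorphism is not finite-dimensional, whereas the $\mathcal{H}_n$-density map has finite-dimensional image, so it cannot be injective). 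This yields the desired contradiction.

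The main obstacle I anticipate is step~(4)/(6): making precise the passage from ``$W_F$ sits on a $1-\eps$ fraction of the finitely forcible $W$'' to ``the order-$\le n$ density vector of $W$ essentially equals that of $W_F$, uniformly as $\eps\to 0$'', and then turning a clash in weak-isomorphism type of two $W_F$'s into a clash in forcing data. The delicate point is that the forcing family $\GG^{(F,k)}$ may depend on $F$ and $k$, so one must argue at the level of the \emph{full} $\mathcal{H}_n$-density vector (which does not depend on the choice of subfamily) and use Proposition~\ref{prop-ffext} to convert ``finitely forcible by some subfamily of $\mathcal{H}_n$'' into ``the $\mathcal{H}_n$-density vector determines the graphon''. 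One must also be careful that the induced-subgraphon notion allows a measure-preserving rearrangement, so ``cut distance to $W_F$ tends to $0$'' should be stated up to weak isomorphism; this is exactly where the compactness of the graphon space (as opposed to a naive metric-space argument on $[0,1]^{[0,1]^2}$) is essential, and where I would invoke the Lovász--Szegedy theory already cited in Section~\ref{sec-prelim}. Once these measure-theoretic bookkeeping issues are handled, the conclusion follows from the soft fact that a finite-dimensional map cannot be injective on the infinite-dimensional space of weak-isomorphism classes of graphons.
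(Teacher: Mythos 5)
Your proposal has a genuine gap at its core, in steps (4)--(6). Finite forcibility of $W^{(F,k)}$ by graphs of order at most $n$ says only that the \emph{exact} density vector $v^{(F,k)}\in\RR^N$ has a unique graphon realizing it; it says nothing about nearby vectors. Your limiting argument shows that $v^{(F,k)}\to v^{(F,\infty)}$, the density vector of $W_F$ itself, but the set of density vectors whose fibre is a singleton is not closed, so nothing forces the limit vector $v^{(F,\infty)}$ to determine a unique graphon. Concretely: if $U_1$ and $U_2$ agree on all order-$\le n$ densities but are not weakly isomorphic, the hypothetical construction gives $W^{(1,k)}$ and $W^{(2,k)}$ whose density vectors both converge to the common vector $v$ but need not be \emph{equal} for any finite $k$; hence you cannot invoke uniqueness to conclude $W^{(1,k)}$ and $W^{(2,k)}$ are weakly isomorphic, and the limits are just $U_1$ and $U_2$, which were never claimed to be finitely forcible. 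The ``finite-dimensional map cannot be injective on an infinite-dimensional space'' principle is true but does not apply, because finite forcibility never asserts global injectivity of the $\mathcal{H}_n$-density map --- only that certain specific fibres are singletons. The continuity/rigidity statement you flag as the key structural point is exactly what is missing and, as far as I can see, unobtainable by soft compactness.

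What the paper does instead is a direct, exact-matching perturbation. It takes $W_F$ to be a block arrangement of graphons $W_1,\dots,W_m$ (one per connected graph on at most $n$ vertices) whose order-$\le n$ density vectors are \emph{linearly independent} (a variant of Erd\H{o}s--Lov\'asz--Spencer) and which contain no positive square, plus an all-ones block. Given any $W$ containing this $W_F$ on a $1-\eps$ fraction, one stretches the block sizes; the linear independence makes the relevant Jacobian invertible for small $\eps$, so the Implicit Function Theorem produces a genuinely different $W'$ with \emph{exactly} the same densities of all connected graphs on at most $n$ vertices (and hence, by Proposition~\ref{densall}, of all graphs on at most $n$ vertices). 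The graphons $W$ and $W'$ are then separated by the clique parameter $\omega$ of Lemma~\ref{lm-omega}, which strictly increases under the stretching because the all-ones block grows. Both ingredients --- exact density matching via the implicit function theorem, and an explicit weak-isomorphism invariant to certify $W\not\cong W'$ --- are absent from your outline and appear to be essential.
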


To prove Theorem~\ref{noUniversal},
we need a modification of a result of Erd\H{o}s, Lov\'{a}sz and Spencer~\cite[Lemma 5]{ELS}.
The proof follows the lines of the proof in~\cite{ELS} but we include its sketch for completeness.

\begin{lem}
\label{ELSlem}
Let $n$ be a positive integer and let $H_1,\ldots,H_m$ be all connected graphs on at most $n$ vertices.
There exist graphons $W_1,\dots,W_m$ such that the vectors
\[(d(H_1,W_i),\ldots,d(H_m,W_i)),\;i\in [m],\]
are linearly independent in $\RR^m$, and
there is no index $i\in [m]$ and non-null set $A\subseteq [0,1]$ such that
$W_i$ is positive almost everywhere on $A\times A$.
\end{lem}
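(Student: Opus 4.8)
The goal is to find graphons $W_1,\dots,W_m$ whose density vectors span $\RR^m$, while each $W_i$ vanishes on a positive-measure square, so none of them has the property that $W_i>0$ a.e.\ on some $A\times A$. The plan is to follow the Erd\H{o}s--Lov\'asz--Spencer argument: build the $W_i$ as suitable \emph{blow-ups of fixed finite graphs}, so that linear independence of the density vectors reduces to a known non-degeneracy statement. First I would recall that for a sequence of graphs $F$ with increasing order, one can form the graphon $W_F$; more usefully, for a fixed finite graph $F$ one takes its associated step graphon $W^{(F)}$ (each vertex of $F$ becomes an interval of equal length, with values $0/1$ according to adjacency). For such a step graphon, $d(H, W^{(F)})$ is, up to normalization, the number of homomorphisms (or induced-subgraph maps) from $H$ into $F$, a quantity governed by the combinatorics of $F$.

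\textbf{Key steps.} (1) Fix $N$ large (say $N = n$ or a bit larger) and consider the finite graphs on vertex set $[N]$; for a graph $F$ on $[N]$, let $W_F$ denote the associated step graphon with $N$ equal parts, but \emph{modified} so that one fixed part, say the interval corresponding to vertex $1$, has $W_F \equiv 0$ on its own square and on its edges to everything --- equivalently, work with graphs $F$ that have an isolated vertex, or simply delete a small interval's worth of edges. This guarantees the ``no positive square'' conclusion: the block corresponding to the isolated/cleared vertex is a non-null $A$ on which $W_F = 0$, so certainly $W_F$ is not positive a.e.\ on $A\times A$, and more to the point we need that \emph{for every} non-null $A$, $W_F$ is not positive a.e.; but a cleared row gives a non-null set where $W_F$ vanishes entirely on $A \times [0,1]$, which already forbids $W_F$ being positive a.e.\ on any $A'\times A'$ with $|A'\cap A|>0$, and for $A'$ avoiding $A$ the remaining part is a genuine $0/1$ step graphon with a white block. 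Choosing the step graphons so that the symmetric difference / complement structure always contains a white sub-block handles it. (2) For the independence: invoke the classical fact (this is the content of~\cite[Lemma 5]{ELS}) that the homomorphism-density functionals $W \mapsto d(H_i, W)$, as $H_i$ ranges over connected graphs on $\le n$ vertices, are linearly independent as functions on graphons; hence one can pick graphons $W_1,\dots,W_m$ realizing a basis of $\RR^m$ under the map $W \mapsto (d(H_1,W),\dots,d(H_m,W))$. The subtlety ELS handle is that one wants these witness graphons to be of a \emph{restricted form} (here: with a forbidden positive square). This is done by a dimension/perturbation argument: the set of graphons with a cleared block is still ``rich enough'' that the densities $d(H_i,\cdot)$ restricted to it remain linearly independent, because one can take disjoint unions (which corresponds, on the graphon side, to taking $\lambda W \oplus (1-\lambda)W'$-type combinations that introduce a zero cross-block anyway) and use that connected $H_i$ distribute multiplicatively poorly under disjoint union --- exactly the mechanism of Proposition~\ref{densall} read backwards. (3) Finally, re-index: since $m$ is the number of connected graphs on $\le n$ vertices, a basis consists of exactly $m$ graphons $W_1,\dots,W_m$, completing the statement.

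\textbf{Main obstacle.} The routine part is the construction of the ``defective'' step graphons and checking they have no positive square. The delicate part is showing \emph{linear independence of the density vectors while staying inside the restricted class} of graphons. The clean way is: take $m$ distinct connected graphs $F_1,\dots,F_m$ on at most $n$ vertices (these exist, being the $H_j$ themselves, possibly reordered), form $F_i' = F_i \,\dot\cup\, K_1$ (add an isolated vertex), and let $W_i$ be the step graphon of $F_i'$; the isolated vertex gives the white block. Then the matrix $\big(d(H_j, W_i)\big)_{i,j}$ is, up to a diagonal rescaling coming from the isolated-vertex factor and up to lower-triangular corrections from Proposition~\ref{densall}, essentially the matrix of induced-density counts of connected graphs inside connected graphs, which is non-singular by a standard M\"obius/triangularity argument: order both $H_j$ and $F_i$ by number of edges (or by the subgraph order), and observe $d(H_j, W_{F_i})$ vanishes unless $H_j$ embeds into $F_i$, with a strictly positive ``diagonal'' term when $H_j = F_i$. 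This triangularity is where I expect to spend the most care; everything else is bookkeeping. I would present exactly this triangularity computation as the heart of the proof sketch and cite~\cite{ELS} for the original version.
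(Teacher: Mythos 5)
Your high-level plan (step graphons, reduce to an ELS-type independence statement) matches the paper's, but the concrete mechanism you designate as ``the heart of the proof'' does not work, and the paper's proof uses a genuinely different mechanism precisely to avoid this problem.

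The gap is in the triangularity claim. You assert that $d(H_j,W_{F_i})$ ``vanishes unless $H_j$ embeds into $F_i$,'' where $W_{F_i}$ is the equal-part step graphon of a fixed finite graph $F_i$. This is false for induced densities in step graphons: sampling from a step graphon is sampling from a large blow-up of $F_i$, and blow-ups contain induced subgraphs that $F_i$ does not. For instance, take $F=K_2\,\dot\cup\,K_1$ and sample three points, two from the part of one endpoint of the edge and one from the part of the other endpoint; this induces a $P_3$, so $d(P_3,W_F)>0$ even though $P_3$ (with $2$ edges) does not embed into $F$ (with $1$ edge). So the matrix $\bigl(d(H_j,W_{F_i})\bigr)_{i,j}$ is not triangular under any ordering by vertices or edges, and its nonsingularity is exactly what needs proof rather than a ``standard M\"obius argument.'' The paper (following Erd\H{o}s--Lov\'asz--Spencer) circumvents this by not fixing the part sizes: for each connected $H_i$ on $k_i$ vertices it takes the whole family of step graphons $W_{i,\vec{s}_i}$ with variable part measures $\vec{s}_{i,1},\dots,\vec{s}_{i,k_i}$ (plus a leftover isolated part), and shows the density vectors of this family span $\RR^m$. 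If not, a nonzero linear relation $\sum_j c_j d(H_j,W_{i,\vec{s}_i})=0$ would be a polynomial identity in $\vec{s}_{i,1},\dots,\vec{s}_{i,k_i}$; choosing $i$ with $c_i\neq 0$, only the term $c_i d(H_i,W_{i,\vec{s}_i})$ contributes to the multilinear monomial $\vec{s}_{i,1}\cdots\vec{s}_{i,k_i}$ (here connectedness of the $H_j$ is used), so the polynomial is not identically zero --- a contradiction. One then selects $m$ independent vectors from the spanning family. This variable-part-size polynomial argument is the step your proposal is missing and cannot be replaced by the fixed-graph triangularity you describe.

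A secondary, fixable point: your justification of the ``no positive square'' property is more complicated than necessary and, as written (``the remaining part is a genuine $0/1$ step graphon with a white block''), does not establish the required conclusion for \emph{every} non-null $A$. The clean argument, implicit in the paper, is that the step graphons come from simple graphs, so every diagonal block $S_j\times S_j$ is zero; any non-null $A$ meets some part $S_j$ in a non-null set, and $W$ vanishes on $(A\cap S_j)\times(A\cap S_j)\subseteq A\times A$. No isolated vertex is needed for this part (the extra part in the paper's construction serves the polynomial argument, not this one).
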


\begin{proof}
Fix an integer $n$ and the graphs $H_1,\ldots,H_m$.
Let $k_i$ be the number of vertices of $H_i$, $i\in [m]$.
For $i\in [m]$ and $\vec{s}_i\in [0,1]^{k_i}$ such that $\vec{s}_{i,1}+\cdots+\vec{s}_{i,k_i}\le 1$,
define $W_{i,\vec{s}_i}$ to be the following step graphon with $k_i+1$ parts $S_1,\ldots,S_{k_i+1}$.
The measure of the part $S_j$, $j\in [k_i]$, is $\vec{s}_{i,j}$, and
the measure of the remaining part $S_{k_i+1}$ is $1-(\vec{s}_{i,1}+\cdots+\vec{s}_{i,k_i})$.
The value of $W_{i,\vec{s}_i}(x,y)$ is equal to $1$ for $(x,y)\in [0,1]^2$ such that
$x\in S_j$, $y\in S_{j'}$ and the $j$-th and $j'$-th vertices of $H_i$ are adjacent, and
it is equal to $0$ elsewhere.
Note that there does not exist a non-null set $A\subseteq [0,1]$ such 
that $W_{i,\vec{s}_i}$ is positive almost everywhere on $A\times A$.

Let $\mathcal S \subseteq \RR^m$ be the set of vectors that arise as $(d(H_j,W_{i,\vec{s}_i}))_{j \in [m]}$ over all choices of $i$ and $\vec{s}_i$. We claim that the span of $\mathcal S$ 
is $\RR^m$. Suppose not. Then we may choose $(c_1,\dots,c_m)$ to be a non-zero vector in the orthogonal complement of the span of $\mathcal{S}$.
That is, $c_1,\dots,c_m$ are real numbers, not all zero, such that
\begin{equation}
\label{dependent}
c_1\cdot d(H_1,W_{i,\vec{s}_i})+\cdots +c_m\cdot d(H_m,W_{i,\vec{s}_i})=0
\end{equation}
for every $i\in [m]$ and $\vec{s}_i\in [0,1]^{k_i}$ such that $\vec{s}_{i,1}+\cdots+\vec{s}_{i,k_i}\le 1$.
Take $i$ such that $c_i \ne 0$.
The left side of \eqref{dependent} is a polynomial in $\vec{s}_{i,1},\ldots,\vec{s}_{i,k_i}$.
Observe that the only term that contributes to the coefficient of the monomial $\vec{s}_{i,1}\cdots \vec{s}_{i,k_i}$
is the term $c_i d(H_i,W_{i,\vec{s}_i})$.
It follows that the left side of \eqref{dependent} is polynomial that is not identically zero,
which implies that the equality \eqref{dependent} cannot hold for all choices of $\vec{s}_i\in [0,1]^{k_i}$. 
Thus, the span of $\mathcal S$ is $\RR^m$, and so we can choose $m$ linearly independent vectors from $\mathcal S$. 
The corresponding graphons can be taken as $W_i$.
\end{proof}

We are now ready to prove Theorem~\ref{noUniversal}.

\begin{proof}[Proof of Theorem~\ref{noUniversal}]
Fix an integer $n$.
Note that Proposition~\ref{densall} yields that
the densities of all subgraphs on at most $n$ vertices are determined
by the densities of connected subgraphs on at most $n$ vertices.
Therefore, to prove the theorem, it is enough to show the following:
there exists a graphon $W_F$ and a real number $\eps>0$ such that
no graphon $W$ that contains $W_F$ as a subgraphon on at least $1-\eps$ fraction of its vertices
is a finitely forcible graphon such that the set of all \emph{connected} graphs with at most $n$ vertices is a forcing family. 
Let $H_1,\ldots,H_m$ be all connected graphs with the number of vertices between two and $n$, and
let $k_i$ be the number of vertices of $H_i$, $i\in [m]$.

Let $W_1,\dots,W_m$ be the graphons from Lemma~\ref{ELSlem}.
In addition, let $W_{m+1}$ be the graphon equal to one everywhere.
We define $W_F(x,y)$ to be equal to
\[W_i((m+2)x-(i-1),(m+2)y-(i-1)) \qquad\mbox{if $(x,y)\in \left[\frac{i-1}{m+2},\frac{i}{m+2}\right)^2$ for $i\in [m+1]$, and}\]
equal to $0$ otherwise.
In other words, $W_F$ contains each of the graphons $W_1,\ldots,W_{m+1}$ on a $\frac{1}{m+2}$ fraction of its vertices, and
it is zero elsewhere.

We next define a $m\times m$ square matrix $A^{\delta}$ for $\delta\in (0,1)$ as
\[A^{\delta}_{ij}=(1-\delta)^{k_i}k_i\left(\frac{1}{m+2}\right)^{\;k_i-1}d(H_i,W_j)\mbox{ for } i,j\in [m].\]  
The matrix $A^{\delta}$ is invertible
since multiplying each row by $(1-\delta)^{-k_i}k_i^{-1}\left(\frac{1}{m+2}\right)^{-(k_i-1)}$
results in the matrix that has $d(H_i,W_j)$ as the entry in the $i$-th row and $j$-th column.
Hence, there exists $\eps_0$ such that any matrix obtained from $A^{\delta}$ for $\delta\in (0,\varepsilon_0)$
by perturbing each of its entries by at most $\eps_0$ is invertible.
We now set $\eps=\min\left\{\frac{\eps_0}{(m+3)^{n}n},\frac{1}{m+4}\right\}$.

Suppose that $W$ is a graphon that
contains $W_F$ on a $1-\eps'$ fraction of its vertices for some $\varepsilon'\leq \varepsilon$.
By applying a suitable measure preserving transformation,
we can assume that the subgraphon of $W$ on $\left[\frac{i-1}{m+2}(1-\eps'),\frac{i}{m+2}(1-\eps')\right)^2$
is weakly isomorphic to $W_i$ for every $i\in [m+1]$, and
the graphon $W$ is zero almost everywhere else on $[0,1-\eps')^2$.
Consider a vector $\vec{s}\in\left[0,\frac{1}{m+1}\right)^{m+1}$, and
let $t_0,\ldots,t_{m+2}\in [0,1]$ be such that $t_0=0$, $t_i=t_{i-1}+\vec{s}_i$ for $i\in [m+1]$, and $t_{m+2}=1$.
Define a function $\varphi_{\vec s}:[0,1]\to [0,1]$ as follows:
\[\varphi_{\vec s}(x)=\begin{cases}
  \frac{(i-1)(1-\eps')}{m+2}+\frac{x-t_{i-1}(1-\eps')}{(m+2)(t_i-t_{i-1})} & \mbox{if $x\in [t_{i-1}(1-\eps'),t_i(1-\eps'))$ for $i\in [m+2]$, and} \\
  x & \mbox{otherwise.}
  \end{cases}\]
Finally, define the graphon $W_{\vec{s}}$ as $W_{\vec{s}}(x,y)=W(\varphi_{\vec s}(x),\varphi_{\vec s}(y))$.
Informally speaking, the part of $W$ containing $W_i$ is stretched to size $\vec{s}_i(1-\eps')$ for every $i\in [m+1]$.
In particular, the graphons $W$ and $W_{\frac{1}{m+2},\ldots,\frac{1}{m+2}}$ are the same.

We now analyze $d(H_i,W_{\vec{s}})$ as a function of $\vec{s}_1,\ldots,\vec{s}_{m+1}$.
Each of the $k_i$ vertices of $H_i$ can be chosen 
either from one of the $m+2$ intervals $[t_{i-1}(1-\eps'),t_i(1-\eps'))$, $i\in [m+2]$, or
from the interval $[1-\eps',1]$,
i.e., there are $(m+3)^{k_i}$ ways how individual vertices of $H_i$ can be chosen from these $m+3$ intervals.
Since $H_i$ is connected,
the choices where no vertex of $H_i$ is chosen to be in the interval $[1-\eps',1]$ contribute to $d(H_i,W_{\vec{s}})$ by a total of
\[\sum_{j=1}^{m+1}((1-\eps')\vec{s}_j)^{\;k_i}d(H_i,W_j).\]
We next analyze contributions of the choices where at least one of the vertices of $H_i$ is chosen from the interval $[1-\eps',1]$.
Consider a choice where $\ell\ge 1$ vertices are mapped to the interval $[1-\eps',1]$.
For this choice, the contribution is a product of a constant between $0$ and $1$,
the $\ell$-th power of $\eps'$ and
$k_i-\ell$ factors of the form $\vec{s}_1,\vec{s}_2,\ldots,\vec{s}_{m+1}$, and $(1-\vec{s}_1-\vec{s}_2-\ldots-\vec{s}_{m+1})$;
the constant is the probability that a $W$-random graph is $H_i$ conditioned on the vertices being sampled from the chosen intervals.
Since the absolute value of the derivative of the product of the $k_i-\ell$ factors with respect to $\vec{s}_j$ is at most $k_i-\ell$ and
the product of the remaining factors is a constant, which is at most $\eps'$,
the contribution to the derivative of $d(H_i,W)$ with respect to $\vec{s}_j$ is at most $k_i\eps'$.
This holds for each such choice of intervals for the vertices of $H_i$.
It follows that the derivative of $d(H_i,W)$ with respect to $\vec{s}_j$ is between
\[(1-\eps')^{k_i}k_i\vec{s}_j^{\;k_i-1}d(H_i,W_j)-(m+3)^{k_i}k_i\eps'\mbox{ and }
  (1-\eps')^{k_i}k_i\vec{s}_j^{\;k_i-1}d(H_i,W_j)+(m+3)^{k_i}k_i\eps'\mbox{.}\]

Observe that
there is an open ball contained in $\left(0,\frac{1}{m+1}\right)^{m+1}$ that
contains the point $\vec{s}=\left(\frac{1}{m+2},\ldots,\frac{1}{m+2}\right)$ such that
each $d(H_i,W_{\vec{s}})$, $i\in [m]$, is well-defined on this ball.
Also observe that
the entries of the Jacobian matrix $\left(\frac{\partial d(H_i,W_{\vec{s}})}{\partial\vec{s}_j}\right)_{i,j\in [m]}$
for $\vec{s}=\left(\frac{1}{m+2},\ldots,\frac{1}{m+2}\right)$
differ from the entries of $A^{\varepsilon'}$ by at most $(m+3)^{k_i}k_i\eps' \le (m+3)^{n}n\eps \le \eps_0$.
In particular, the Jacobian matrix is invertible.
Hence, the Implicit Function Theorem implies that
there exist $\delta\in\left(0,\frac{1}{m+1}-\frac{1}{m+2}\right)$ and
a continuous function $g\colon \left(\frac{1}{m+2}-\delta, \frac{1}{m+2}+\delta\right)\to \left(0,\frac{1}{m+1}\right)^m$ such that
$g\left(\frac{1}{m+2}\right)=\left(\frac{1}{m+2},\ldots,\frac{1}{m+2}\right)$ and
\[d(H_i,W)=d(H_i,W_{(g(z)_1,\ldots,g(z)_m,z)})\]
for every $z\in\left(\frac{1}{m+2}-\delta,\frac{1}{m+2}+\delta\right)$.
Fix $z\in\left(\frac{1}{m+2},\frac{1}{m+2}+\delta\right) \subseteq \left(0,\frac{1}{m+1}\right)$. 
We set $W'=W_{(g(z)_1,\ldots,g(z)_m,z)}$.
Observe that the densities of all graphs $H_1,\ldots,H_m$ are the same in $W$ and $W'$
by the choice of $g$ and $z$.

We finish the proof by establishing that the graphons $W$ and $W'$ are not weakly isomorphic.
By Lemma~\ref{lm-omega}, it suffices to show that $\omega(W)<\omega(W')$.
Note that $\omega(W)\ge\frac{1-\varepsilon'}{m+2}$,
which can be seen by considering the interval $[\frac{m}{m+2}(1-\varepsilon'),\frac{m+1}{m+2}(1-\varepsilon'))$.
Let $X$ be any measurable set $X\subseteq [0,1]$ such that $W$ is equal to $1$ almost everywhere on $X\times X$ and
$|X|\ge\frac{1-\varepsilon'}{m+2}$.
We construct a measurable set $X'\subseteq [0,1]$ such that $|X'|>|X|$ and $W'$ is equal to $1$ almost everywhere on $X'\times X'$.

Define $X_1=X\cap [0,1-\eps')$ and $X_2=X\cap [1-\eps',1]$.
Observe that $X_1\sqsubseteq\left[\frac{m}{m+2}(1-\varepsilon'),\frac{m+1}{m+2}(1-\varepsilon')\right)$
by the second assertion of Lemma~\ref{ELSlem} and
that $|X_2|\le\varepsilon'\le\eps\le\frac{1}{m+4}$.
This implies that 
\[|X_1|\ge \frac{1-\eps'}{m+2}-\varepsilon' =\frac{1-(m+3)\varepsilon'}{(m+2)} \ge \frac{1}{(m+2)(m+4)}.\]
Since the interval $\left[\frac{m}{m+2}(1-\varepsilon'),\frac{m+1}{m+2}(1-\varepsilon')\right)$ is stretched to an interval of size $z(1-\varepsilon')>\frac{1}{m+2}(1-\varepsilon')$ in $W'$,
we obtain that
\[\omega(W')\ge z(m+2)|X_1|+|X_2|=|X|+(z(m+2)-1)|X_1|\ge |X|+\frac{z(m+2)-1}{(m+2)(m+4)}.\]
Since the choice of $X$ was arbitrary, it follows that
\[\omega(W')\ge\omega(W)+\frac{z(m+2)-1}{(m+2)(m+4)} > \omega(W)\]
and we conclude that the graphons $W$ and $W'$ are not weakly isomorphic.
\end{proof}

We would like to remark that Theorem~\ref{noUniversal} excludes the existence of a finite family $\GG$ of graphs
such that for every graphon $W_F$ and every $\eps>0$, there exists a finitely forcible graphon $W_0$ that
contains $W_F$ as subgraphon on a $1-\eps$ fraction of its vertices, and $\GG$ is a forcing family for $W_0$.
In other words, Theorem~\ref{thm1} cannot be proven with a universal forcing family (unlike Theorem~\ref{thm-ckm}). However,
we were not able to show that the number of graphs needed to force the structure of graphons containing $W_F$ must grow with $\eps^{-1}$,
i.e., we do not know whether the following stronger statement is true:
for every $K\in\NN$, there exist a graphon $W_F$ and $\eps>0$ such that
if $W_0$ is a finitely forcible graphon that contains $W_F$ as a subgraphon on a $1-\eps$ fraction of its vertices,
then every forcing family of $W_0$ contains at least $K$ graphs.

\section*{Acknowledgment}

We would like to thank Jan Hladk\'y for useful discussions on the step forcing property of $C_4$,
in particular, the arguments used at the end of Subsection~\ref{forcing_densities}. We would also like to thank
the authors of~\cite{bib-dolezal+} for sharing an early version of their manuscript.

\newcommand{\advances}{Adv. Math. }
\newcommand{\annals}{Ann. of Math. }
\newcommand{\cpc}{Combin. Probab. Comput. }
\newcommand{\dcg}{Discrete Comput. Geom. }
\newcommand{\discrete}{Discrete Math. }
\newcommand{\eur}{European J.~Combin. }
\newcommand{\gfa}{Geom. Funct. Anal. }
\newcommand{\jcta}{J.~Combin. Theory Ser.~A }
\newcommand{\jctb}{J.~Combin. Theory Ser.~B }
\newcommand{\rsa}{Random Structures Algorithms }
\newcommand{\sidma}{SIAM J.~Discrete Math. }
\newcommand{\tcs}{Theoret. Comput. Sci. }

%%% AUTHOR:
%%% Bibliography goes here. Note that the arXiv cannot process bibtex
%%% or biber bibliographies.  Example of acceptable bibliograpy format:
\bibliographystyle{amsplain}

%% AUTHOR: You can generate such a bibliography from a .bib file by 
%% running pdflatex/bibtex/pdflatex/pdflatex and then pasting the .bbl file
%% between \begin{thebibliography} and \end{bibliography}

%%% AUTHOR: Include a short description of each author following the
%%% structure below. Use the same short tags used previously.  
%%% Use \imageat{} and \imagedot{} instead of "@" and "." in
%%% email addresses-this replaces the symbols with graphics to avoid 
%%% e-mail address harvesting from the .pdf file
\begin{dajauthors}
\begin{authorinfo}[dan]
  Daniel Kr\'{a}l'\\
  Faculty of Informatics\\
  Masaryk University\\
  Brno, Czech Republic\\
  dkral\imageat{}fi\imagedot{}muni\imagedot{}cz \\
  \url{http://fi.muni.cz/~dkral}
\end{authorinfo}
\begin{authorinfo}[laci]
  L{\'a}szl{\'o} M. Lov{\'a}sz\\
  Department of Mathematics\\
  Massachusetts Institute of Technology\\
  Cambridge, MA, USA\\
  lmlovasz\imageat{}mit\imagedot{}edu \\
  \url{http://math.mit.edu/~lmlovasz/}
\end{authorinfo}
\begin{authorinfo}[jon]
  Jonathan A. Noel\\
  Mathematics Institute\\
  University of Warwick\\
  Coventry, UK\\
  j\imagedot{}noel\imageat{}warwick\imagedot{}ac\imagedot{}uk\\
  \url{http://homepages.warwick.ac.uk/staff/J.Noel}
\end{authorinfo}
\begin{authorinfo}[kuba]
  Jakub Sosnovec\\
  NXP semiconductors\\
  Brno, Czech Republic\\
  j\imagedot{}sosnovec\imageat{}email\imagedot{}cz\\
  \url{http://sites.google.com/site/jsosnovec1}
\end{authorinfo}
\end{dajauthors}

\end{document}